\documentclass[10pt, leqno]{article}

\usepackage{amsmath,amssymb,amsthm, epsfig}

\usepackage{hyperref}

\usepackage{amsmath}

\usepackage[pdftex]{color}

\usepackage{color}

\usepackage{ulem}

\usepackage{mathrsfs}

\usepackage{wasysym}

\usepackage{stackrel}



\title{\large{\bf Sharp Hessian estimates for fully nonlinear elliptic equations under relaxed convexity assumptions, oblique boundary conditions and applications}}
\author{\it by \smallskip \\
 Junior da S. Bessa\footnote{\noindent Universidade Federal do Cear\'{a}. Department of Mathematics. Fortaleza - CE, Brazil. \noindent \texttt{E-mail address: \url{junior.bessa@alu.ufc.br}}},\,\,\,Jo\~{a}o Vitor  da Silva\footnote{\noindent Universidade Estadual de Campinas - UNICAMP. Department of Mathematics. Campinas - SP, Brazil. \noindent \texttt{E-mail address: \url{jdasilva@unicamp.br}}}, \,\,\,Maria N.B. Frederico\footnote{\noindent Universidade Federal do Cear\'{a}. Department of Mathematics. Fortaleza - CE, Brazil. \noindent \texttt{E-mail address: \url{nildebarreto@gmail.com}}}\\ $\&$\\ Gleydson C. Ricarte \footnote{\noindent Universidade Federal Cear\'{a}. Department of Mathematics. Fortaleza, CE-Brazil 60455-760. \noindent \texttt{E-mail address: \url{ricarte@mat.ufc.br}}}
}



\newlength{\hchng}
\newlength{\vchng}
\setlength{\hchng}{0.55in} \setlength{\vchng}{0.55in}
\addtolength{\oddsidemargin}{-\hchng}
\addtolength{\textwidth}{2\hchng} \addtolength{\topmargin}{-\vchng}
\addtolength{\textheight}{2\vchng}



\def \R {\mathbb{R}}
\def \supp {\mathrm{supp } }

\def \Leb {\mathcal{L}^n}

\newcommand{\defeq}{\mathrel{\mathop:}=}


\newtheorem{theorem}{Theorem}[section]

\newtheorem{lemma}[theorem]{Lemma}

\newtheorem{proposition}[theorem]{Proposition}

\newtheorem{corollary}[theorem]{Corollary}

\theoremstyle{definition}

\newtheorem{definition}[theorem]{Definition}

\theoremstyle{remark}

\newtheorem{remark}[theorem]{Remark}

\numberwithin{equation}{section}


\newcommand{\intav}[1]{\mathchoice {\mathop{\vrule width 6pt height 3 pt depth  -2.5pt
\kern -8pt \intop}\nolimits_{\kern -6pt#1}} {\mathop{\vrule width
5pt height 3  pt depth -2.6pt \kern -6pt \intop}\nolimits_{#1}}
{\mathop{\vrule width 5pt height 3 pt depth -2.6pt \kern -6pt
\intop}\nolimits_{#1}} {\mathop{\vrule width 5pt height 3 pt depth
-2.6pt \kern -6pt \intop}\nolimits_{#1}}}



\begin{document}
\maketitle

\begin{abstract}

We derive global $W^{2,p}$ estimates (with $n\le p <\infty$) for viscosity solutions to fully nonlinear elliptic equations under relaxed structural assumptions on the governing operator that are weaker than convexity and oblique boundary conditions as follows:
$$
\left\{
\begin{array}{rclcl}
 F(D^2u,Du,u,x) &=& f(x)& \mbox{in} &   \Omega \\
 \beta(x) \cdot Du(x) + \gamma(x) u(x)&=& g(x) &\mbox{on}& \partial \Omega,
\end{array}
\right.
$$
for $f \in L^p(\Omega)$ and under appropriate assumptions on the data $\beta, \gamma$, $g$ and $\Omega \subset \R^n$. Our approach makes use of geometric tangential methods, which consist of importing ``fine regularity estimates'' from a limiting profile, i.e., the \textit{Recession} operator, associated with the original second-order one via compactness and stability procedures. As a result, we pay special attention to the borderline scenario, i.e., $f \in \text{BMO}_p \varsupsetneq L^{\infty}$. In such a setting, we prove that solutions enjoy $\text{BMO}_p$ type estimates for their second derivatives. Finally, as another application of our findings, we obtain Hessian estimates to obstacle-type problems under oblique boundary conditions and no convexity assumptions, which may have their own mathematical interest. A density result for a suitable class of viscosity solutions will also be addressed.

\medskip
\noindent \textbf{Keywords:} Hessian estimates, fully nonlinear elliptic equations, oblique boundary conditions, relaxed convexity assumptions, obstacle type problems.
\vspace{0.2cm}

\noindent \textbf{AMS Subject Classification:} 35J25, 35J60, 35B65, 35R35.
\end{abstract}

\newpage

\section{Introduction}

\hspace{0.4cm}In this manuscript, we investigate global $W^{2,p}$ estimates (with $n \le p< \infty$) for viscosity solutions to fully nonlinear elliptic equations of the form:
\begin{equation}\label{E1}
\left\{
\begin{array}{rclcl}
 F(D^2u,Du,u,x) &=& f(x)& \mbox{in} &   \Omega \\
 \mathcal{B}(x,u,Du)&=& g(x) &\mbox{on}& \partial \Omega,
\end{array}
\right.
\end{equation}
where
$$
\displaystyle \mathcal{B}(x,u,Du) = \beta(x) \cdot Du(x) + \gamma(x) u(x)
$$
is the oblique boundary condition, $\Omega \subset \mathbb{R}^n$ a bounded  domain with a regular boundary and data $f$, $g$, $\beta$ and $\gamma$ under appropriate conditions to be clarified soon.

Throughout this work, it is assumed that $F:\textit{Sym}(n)\times \R^n \times \R \times \Omega \to \R$ is a second-order operator with uniformly elliptic structure, i.e., there exist \textit{ellipticity constants} $0<\lambda \le \Lambda< \infty$ such that
\begin{equation}\label{Unif.Ellip.}
  \lambda\|\mathrm{Y}\|\leq F(\mathrm{X}+\mathrm{Y}, \varsigma, s, x)-F(\mathrm{X}, \varsigma, s, x) \leq \Lambda \|\mathrm{Y}\|
\end{equation}
 for every $\mathrm{X}, \mathrm{Y} \in \textit{Sym}(n)$ with $\mathrm{Y} \ge 0$ (in the usual partial ordering on symmetric matrices) and $(\varsigma, s, x)\in \mathbb{R}^n \times \R \times \Omega $. Moreover, our approach does not impose any extra regularity assumption on nonlinearity $F$, \textit{e.g.} concavity, convexity, or smoothness hypothesis (cf. \cite{Caff1}, \cite{CC}, \cite{Lie-Tru} and \cite{Sav07} for classical and modern references). As a matter of fact, we obtain global $W^{2,p}$-estimates, i.e.,
  $$
  \|u\|_{W^{2, p}(\Omega)} \le \mathrm{C}(\verb"universal")\left(\|u\|_{L^{\infty}(\Omega)}+ \|f\|_{L^p(\Omega)}+\Vert g\Vert_{C^{1,\alpha}(\partial \Omega)}\right)
  $$
  (with $n \le p < \infty$ and some $\alpha \in (0, 1)$) for viscosity solutions to \eqref{E1} under relaxed convexity assumptions on governing operator (cf. \cite{Kry13}, \cite{PT} and \cite{ST} for modern results) -- a kind of asymptotic convexity condition at infinity on $F$.

 To do this concept more precise, we introduce the notion of \textit{Recession operator}, whose terminology comes from Giga-Sato's work \cite{GS01} on theory of Hamilton-Jacobi PDEs.

\begin{definition}[{\bf Recession profile}]\label{DefAC}
We say that $F(\mathrm{X},\varsigma, s,x)$ is an asymptotically fully nonlinear elliptic operator if there exists a uniformly elliptic operator $F^{\sharp}(\mathrm{X}, \varsigma, s ,x)$, designated the \textit{Recession} operator, such that
\begin{equation}\tag{{\bf \text{\small{Reces.}}}}\label{Reces}
  \displaystyle F^{\sharp}(\mathrm{X}, \varsigma, s, x) \defeq  \lim_{\tau \to 0^{+}} \tau \cdot F\left(\frac{1}{\tau}\mathrm{X}, \varsigma, s, x\right)
\end{equation}
 for all $\mathrm{X} \in \textrm{Sym}(n)$, $\varsigma \in \mathbb{R}^n$, $s \in \mathbb{R}$ and $x \in \Omega$.
\end{definition}

For instance, by way of illustration, such a limiting profile \eqref{Reces} appears naturally in singularly
perturbed free boundary problems ruled by fully nonlinear equations, whose Hessian of solutions
blows-up through the phase transition of the model, i.e., $\partial\{u^{\varepsilon}> \varepsilon\}$, where $u^{\varepsilon}$ satisfies in the viscosity sense
$$
F(D^2 u^{\varepsilon}, x) = \mathcal{Q}_0(x)\frac{1}{\varepsilon} \zeta \left(\frac{u^{\varepsilon}}{\varepsilon}\right).
$$
For such approximations, we have $0< \mathcal{Q}_0 \in C^0(\overline{\Omega})$, $0\leq \zeta \in C^{\infty}(\R)$  with $\supp ~\zeta = [0,1]$. For this reason, in the above model, the limiting free boundary condition is governed by the F$^{\sharp}$ rather than F, i.e.,
$$
F^{\sharp}(z_0, \nabla u(z_0) \otimes \nabla u(z_0)) = 2\mathrm{T},  \quad z_0 \in \partial\{u_0>0\}
$$
in some appropriate viscosity sense, for a certain total mass $\mathrm{T}>0$ (cf. \cite[Section 6]{RT} for some enlightening example and details).

Furthermore, limit profiles such as \eqref{Reces} also appears in higher order convergence rates in periodic homogenization of fully nonlinear uniformly parabolic Cauchy problems with rapidly oscillating initial data, as shown below:
$$
\left\{
\begin{array}{rclcl}
  \frac{d}{dt}u^{\varepsilon}(x, t) & = & \frac{1}{\varepsilon^2}F(\varepsilon^2D^2 u^{\varepsilon}, x, t, \frac{x}{\varepsilon}, \frac{t}{\varepsilon}) & \text{in} & \R^n \times (0, T) \\
  u^{\varepsilon}(x, 0) & = & g\left(x, \frac{x}{\varepsilon}\right)& \text{on} & \R^n.
\end{array}
\right.
$$
In such a context
$$
\frac{1}{\varepsilon^2}F(\varepsilon^2 \mathrm{P}, x, t, y, s) \to F^{\sharp}( \mathrm{P}, x, t, y, s) \quad \text{as} \quad \varepsilon \to 0^+,
$$
uniformly for all $( \mathrm{P}, x, t, y, s) \in (\text{Sym}(n)\setminus \{\mathcal{O}_{n\times n}\})\times \R^n \times [0, T] \times \mathbb{T}^n \times \mathbb{T}$ (see \cite{KL20}). As a result, there exists a unique function $v: \R^n \times [0, T] \times \mathbb{T}^n \times [0, \infty) \to \R$ such that $v(x, t, \cdot, \cdot)$ is a viscosity solution to
$$
\left\{
\begin{array}{rclcl}
  \frac{d}{ds}v(y, s) & = & F^{\sharp}(D_y^2 v, x, t, y, s) & \text{in} & \mathbb{T}^n \times (0, \infty) \\
  v(x, t, y,  0) & = & g(y, x)& \text{on} & \mathbb{T}^n.
\end{array}
\right.
$$

\bigskip

To obtain our sharp Hessian estimates, we will assume that $F^{\sharp}$ enjoys good structural properties (e.g., convexity/concavity, or suitable \textit{a priori} estimates). Thus, using geometric tangential mechanisms, we may access good regularity estimates and, in a suitable manner, transfer such estimates to solutions of the original problem via compactness and stability processes.

It is important to emphasize that convexity assumptions with respect to second-order derivatives play an essential role in obtaining several regularity issues in fully nonlinear elliptic models, e.g., Calder\'{o}n-Zygmund estimates \cite{Caff1}, Schauder type estimates \cite{CC} and higher estimates in some nonlinear models \cite{Evans}, \cite{Kry82}, \cite{Tru83} and \cite{Tru84}, and free boundary problems \cite{daSV21-1}, \cite{daSV21}, \cite{FiShah14}, \cite{Indr19}, \cite{Indrei19}, \cite{IM16} and \cite{R-Oton-Serr17} just to mention a few.  Nevertheless, ensuring that such regularity issues hold true for problems like \eqref{E1} driven by operators satisfying the structure \eqref{Reces} is not an easy task. For instance, consider perturbations of Bellman operators of the form
{\scriptsize{
$$
\left\{
\begin{array}{rclcl}
\displaystyle F(\mathrm{X}, \varsigma, s, x) \defeq  \inf_{\iota \in \hat{\mathcal{A}}} \left(  \sum_{i,j=1}^{n} a^{\iota}_{ij}(x) \mathrm{X}_{ij} +  \sum_{i=1}^{n} b^{\iota}_i(x).\varsigma_i  + c^{\iota}(x)s\right) + \sum_{i=1}^{n} \textrm{arctg}(1+\lambda_i(\mathrm{X})) &=& f(x)& \mbox{in} &   \Omega \\
 \mathcal{B}(x,s,\varsigma)&=& g(x) &\mbox{on}& \partial \Omega,
\end{array}
\right.
$$}}
where $(\lambda_i(\mathrm{X}))^{n}_{i=1}$ are eigenvalues of the matrix $\mathrm{X} \in \text{Sym}(n)$, $b_i^{\iota}, c^{\iota}: \Omega \to \R$ are real functions for each $\iota \in \hat{\mathcal{A}}$ (for $\hat{\mathcal{A}}$ any set of index), and $\left( a^{\iota}_{ij}(x)\right)^{n}_{i,j=1}  \subset C^{0, \alpha}(\Omega)$ have eigenvalues in $[\lambda, \Lambda]$ for each $x \in \Omega$ and $\iota \in \hat{\mathcal{A}}$. Thus, it is easy to check that such operators are neither concave nor convex. However, its Recession profile \eqref{Reces} inherits good structure properties, i.e.,
  $$
  \left\{
  \begin{array}{rclcl}
    \displaystyle F^{\sharp}(D^2 \mathfrak{h}, D \mathfrak{h}, \mathfrak{h}, x)  & = &  \displaystyle \inf_{\iota \in \mathcal{A}} \left(  \sum_{i,j=1}^{n} a^{\iota}_{ij}(x) \mathfrak{h}_{ij}\right) = 0 & \text{in} & \Omega\\
      \mathcal{B}(x,\mathfrak{h}, D \mathfrak{h})&=& g(x) & \mbox{on} &  \partial \Omega,
  \end{array}
  \right.
  $$
which is a convex operator, thereby enjoying higher regularity estimates in the face of Evans-Krylov-Trudinger's type theory (with oblique boundary conditions) addressed in \cite[Theorem 2.4.]{DL20}, \cite[Theorem 1.3]{LiZhang}, \cite[Theorem 5.4]{Lieb02}, \cite[\S 8]{Saf91}, \cite[Theorem 3.3]{Saf94} and \cite[Theorem 8.3]{Sil1} (see also \cite[Chapter 6]{CC}, \cite{Evans}, \cite{Kry82}, \cite{Tru83} and \cite{Tru84} for local estimates), i.e., for some $\alpha \in (0, 1)$ and $\Omega^{\prime} \subset\subset \Omega \cup \Gamma$ - with $C^{1, \alpha}\ni \Gamma \subset \partial \Omega$, $\beta, \gamma, g \in C^{1, \alpha}(\overline{\Gamma})$, there holds
{\scriptsize{
\begin{equation}\label{AprioriEst}
  \|\mathfrak{h}\|_{C^{2, \alpha}(\overline{\Omega^{\prime}})} \le \mathrm{C}\left(n, \lambda, \Lambda, \alpha, \|a^{\iota}_{ij}\|_{C^{0, \alpha}(\Omega)}, \|\beta\|_{C^{1, \alpha}(\overline{\Gamma})}, \|\gamma\|_{C^{1, \alpha}(\overline{\Gamma})}, \Omega^{\prime}, \Omega\right)\left(\|\mathfrak{h}\|_{L^{\infty}(\Omega)}+\|g\|_{C^{1, \alpha}(\overline{\Gamma})}\right).
\end{equation}}}

Heuristically, if the ``limiting profile'' in \eqref{Reces} there exists point-wisely and fulfills a suitable \textit{a priori} estimate like \eqref{AprioriEst}, then a $W^{2, p}-$regularity theory to \eqref{E1} holds true up to the boundary via geometric tangential methods (cf. \cite{BOW16}, \cite{daSR19} and \cite{PT}).

Therefore, one of the main purposes of this work is to establish sharp Hessian estimates for problems enjoying a regular asymptotically elliptic property as \eqref{Reces} on the governing operator in \eqref{E1}, as well as present a number of applications in related analysis themes and free boundary problems of the obstacle-type.

Historically, the concept of an asymptotically regular problem can be traced back to Chipot-Evans' work \cite{Ch} in the context of certain problems in calculus of variations. Since then, there has been an increasing research interest in this class of problems and their related topics, see \cite{BJ0}, \cite{MF}, \cite{JP}, \cite{CS}, \cite{CS1} for an incomplete list of contributions. To the best of our knowledge, no research has been conducted on such boundary asymptotic qualitative estimates of general problems as \eqref{E1}, as well as their intrinsic connections and applications in the analysis of PDEs and free boundary problems.

\subsection{Assumptions and main results}

\hspace{0.4cm} For a given $r >0$, we denote by $\mathrm{B}_r(x)$ the ball of radius $r$ centered at $x=(x^{\prime},x_n) \in \R^n$, where $x^{\prime}=(x_1,x_2,\ldots, x_{n-1}) \in \R^{n-1}$. We write $\mathrm{B}_r = \mathrm{B}_r(0)$ and $\mathrm{B}^+_r = \mathrm{B}_r \cap \mathbb{R}^n_+$. Also, we write $\mathrm{T}_r \defeq \{(x^{\prime},0) \in \mathbb{R}^{n-1} : |x^{\prime}| < r\}$ and $\mathrm{T}_r(x_0) \defeq \mathrm{T}_r + x^{\prime}_0$ where $x^{\prime}_0 \in \mathbb{R}^{n-1}$. Finally, $\textrm{Sym}(n)$ will denote the set of all $n \times n$ real symmetric matrices.

Throughout this manuscript, we will make the following assumptions:
\begin{enumerate}
\item[(A1)] ({\bf Structural conditions}) We assume that $F \in C^0(\text{Sym}(n), \R^n, \R, \Omega)$. Moreover, there are constants $0 < \lambda \le \Lambda$, $\sigma\geq 0$ and $\xi\geq 0$ such that
\begin{eqnarray}\label{EqUnEll}
	\mathscr{P}^{-}_{\lambda,\Lambda}(\mathrm{X}-\mathrm{Y}) - \sigma |q_1-q_2| -\xi|r_1-r_2| &\le& F(\mathrm{X}, q_1,r_1,x)-F(\mathrm{Y},q_2,r_2,x) \nonumber \\
	&\le& \mathscr{P}^{+}_{\lambda, \Lambda}(\mathrm{X}-\mathrm{Y})+ \sigma |q_1-q_2| + \xi|r_1-r_2| \label{5}
\end{eqnarray}
for all $\mathrm{X},\mathrm{Y} \in \textit{Sym}(n)$, $q_1,q_2 \in \mathbb{R}^n$, $r_1,r_2 \in \mathbb{R}$, $x \in \Omega$, where
\begin{equation}
 	\mathscr{P}^{+}_{\lambda,\Lambda}(\mathrm{X}) \defeq  \Lambda \sum_{e_i >0} e_i +\lambda \sum_{e_i <0} e_i \quad \text{and} \quad \mathscr{P}^{-}_{\lambda,\Lambda}(\mathrm{X}) \defeq \Lambda \sum_{e_i <0} e_i + \lambda \sum_{e_i >  0} e_i,
 \end{equation}
 are the \textit{Pucci's extremal operators} and $e_i = e_i(\mathrm{X})$ ($1\leq i\leq n$) denote the eigenvalues of $\mathrm{X}$.

\item[(A2)] ({\bf Regularity of the data}) \, The data satisfy $f \in C^0(\Omega) \cap L^p(\Omega)$ for $n \le p<\infty$, $g, \gamma \in C^0(\partial \Omega)$ with $\gamma \le 0$ and $\beta \in C^0( \partial \Omega; \mathbb{R}^n)$ with $\|\beta\|_{L^{\infty}(\partial \Omega )} \le 1$ and there exists a positive constant $\mu_0$ such that $\beta\cdot \overrightarrow{\textbf{n}}\ge \mu_0$, where $\overrightarrow{\textbf{n}}$ is the outward normal vector of $\Omega$.

\item[(A3)] ({\bf Continuous coefficients in the $L^p$-average sense}) \,According to \cite{CCKS} for a fixed $x_0\in \Omega$, we define the quantity:
$$
	\psi_{F^{\sharp}}(x; x_0) \defeq \sup_{\mathrm{X} \in \textrm{Sym}(n)} \frac{|F^{\sharp}(\mathrm{X},0,0,x) - F^{\sharp}(\mathrm{X},0,0,x_0)|}{\|\mathrm{X}\|+1},
$$
which measures the oscillation of the coefficients of $F$ around $x_0$. Furthermore, for notation purposes, we shall often write $\psi_{F}(x, 0) = \psi_F(x)$.  Finally, the mapping $\Omega \ni x \mapsto F^{\sharp}(\mathrm{X},0,0,x)$ is assumed to be H\"{o}lder continuous function (in the $L^{p}$-average sense) for every $\mathrm{X} \in \textrm{Sym}(n)$. This means that, there exist universal constants\footnote{Throughout this work, a constant is said to be \textit{universal} if it depends only on $n, \lambda, \Lambda, p, \mu_0, \|\gamma\|_{C^{1,\alpha}(\partial \Omega)}$ and $\|\beta\|_{C^{1,\alpha}(\partial \Omega)}$} $\hat{\alpha} \in (0,1)$, $\theta_0 >0$ and $0 < r_0 \le 1$ such that
$$
\left( \intav{\mathrm{B}_r(x_0) \cap \Omega} \psi_{F^{\sharp}}(x,x_0)^{p} dx \right)^{1/p} \le \theta_0 r^{\hat{\alpha}}
$$
for $x_0 \in \overline{\Omega}$ and $0 < r \le r_0$.

\item[(A4)] ({\bf $C^{1,1}$ \textit{a priori} estimates}) We will assume that the recession operator $F^{\sharp}$ there exists and fulfills a up-to-the boundary $C^{1,1}$ \textit{a priori} estimates, i.e., for $x_0 \in \mathrm{B}^{+}_{1}$ and  $g_0 \in C^{1,\alpha}(\mathrm{T}_1)$ (for some $\alpha \in (0, 1)$), there exists a solution $\mathfrak{h} \in C^{1,1}(\mathrm{B}^+_1) \cap C^0(\overline{\mathrm{B}^+_1})$ of
$$
\left\{
\begin{array}{rclcl}
 F^{\sharp}(D^2 \mathfrak{h},x_0) &=& 0& \mbox{in} &   \mathrm{B}^+_1 \\
 \mathcal{B}(x,w,D\mathfrak{h})&=& g_0(x) &\mbox{on}& \mathrm{T}_1,
\end{array}
\right.
$$
such that
$$
	\|\mathfrak{h}\|_{C^{1,1}\left(\overline{\mathrm{B}^{+}_{\frac{1}{2}}}\right)} \le \mathrm{C}_1(\verb"universal") \left(\|\mathfrak{h}\|_{L^{\infty}(\mathrm{B}^{+}_1)}+\|g_0\|_{C^{1,\alpha}(\overline{\mathrm{T}_1})}\right)
$$
for some constant $\mathrm{C}_{1}>0$.
\end{enumerate}

 From now on, an operator fulfilling $\text{(A1)}$ will be referred to as a $(\lambda, \Lambda, \sigma, \xi)-$elliptic operator. Furthermore, we shall always assume the normalization condition: $ F(\mathcal{O}_{n\times n}, \overrightarrow{0}, 0, x) = 0 \quad \text{for all} \,\,\, x \in \Omega,$ which is not restrictive, because one can reduce the problem in order to check it.

We now present our first main result, which endures global $W^{2,p}$ estimates for viscosity solutions to asymptotically fully nonlinear equations.
\vspace{0.4cm}

\begin{theorem}[{\bf $W^{2, p}$ estimates under oblique boundary conditions}]\label{T1}
Let  $n \le p< \infty$, $\Omega \subset \mathbb{R}^n$, $\partial \Omega \in C^{2,\alpha}$, $\beta,\gamma,g \in C^{1,\alpha}(\partial \Omega)$ (for some $\alpha \in (0, 1)$) and $u$ be an $L^p-$viscosity solution of \eqref{E1}. Further, assume that structural assumptions (A1)-(A4) are in force. Then, there exists a constant $\mathrm{C}= \mathrm{C}(n, \lambda, \Lambda, p, \mu_0, \|\gamma\|_{C^{1,\alpha}(\partial \Omega)}, \|\beta\|_{C^{1,\alpha}(\partial \Omega)}, \|\partial \Omega\|_{C^{1,1}})>0$ such that   $u \in W^{2,p}(\Omega)$. Furthermore, the following estimate holds
\begin{equation} \label{2}
	\|u\|_{W^{2, p}(\Omega)} \le \mathrm{C}\cdot\left(\|u\|_{L^{\infty}(\Omega)}+ \|f\|_{L^p(\Omega)}+\| g\|_{C^{1,\alpha}(\partial \Omega)}   \right).
\end{equation}
\end{theorem}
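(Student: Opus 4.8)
\textbf{Proof strategy for Theorem \ref{T1}.}
The plan is to follow the geometric tangential scheme: first establish an interior $W^{2,p}$ estimate, then build a boundary $W^{2,p}$ estimate near $\mathrm{T}_1$ via flattening, and finally patch the two together with a covering argument. The engine in both cases is an approximation lemma which says that, at every scale, an $L^p$-viscosity solution of \eqref{E1} is close (in $C^0$) to a solution $\mathfrak{h}$ of the recession problem $F^{\sharp}(D^2\mathfrak{h},x_0)=0$ with the frozen obliqueness condition, provided $\|f\|_{L^p}$ and $\mathrm{osc}$ of the coefficients (controlled by (A3)) are small. This is where assumption (A4) enters: the limiting profile $\mathfrak{h}$ is $C^{1,1}$ up to $\mathrm{T}_{1/2}$ with the a priori bound displayed in (A4), so its second-order Taylor polynomial approximates $u$ with an $r^2$-type error. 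Iterating this closeness at the geometric scales $r^k$ yields, at each point, a sequence of approximating quadratic polynomials whose coefficients form a Cauchy-type sequence; quantifying the rate gives pointwise control on the ``second-order incremental quotients'' of $u$, which is exactly the Caffarelli-type pointwise estimate that self-improves to $W^{2,p}$ integrability by the standard measure-theoretic / maximal-function machinery (as in \cite{Caff1}, \cite{CC}).

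Concretely, I would proceed in the following steps. \emph{Step 1 (Normalization and localization).} Reduce to $\|u\|_{L^\infty}\le 1$, $\|f\|_{L^p}$ and $\|g\|_{C^{1,\alpha}}$ small by dividing through; flatten $\partial\Omega$ locally by a $C^{2,\alpha}$ diffeomorphism, noting this preserves the structural conditions (A1) with new universal constants and turns the oblique condition into one of the same form on $\mathrm{T}_1$. \emph{Step 2 (Compactness / approximation).} Prove: for every $\delta>0$ there is $\varepsilon>0$ so that if $\|f\|_{L^p(\mathrm{B}_1^+)}\le\varepsilon$, $\psi_{F^\sharp}\le\varepsilon$ in the $L^p$ sense, and $\|g\|_{C^{1,\alpha}}\le\varepsilon$, then there is $\mathfrak{h}$ solving the frozen recession problem with $\|u-\mathfrak{h}\|_{L^\infty(\mathrm{B}_{1/2}^+)}\le\delta$. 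This is by contradiction: a sequence of solutions with vanishing data converges (by $C^\alpha$ estimates for $L^p$-viscosity solutions up to the boundary, cf. \cite{Sil1}, \cite{CCKS}) to a solution of the homogeneous recession equation, using stability of $L^p$-viscosity solutions and the definition \eqref{Reces} of $F^\sharp$ as a locally uniform limit. \emph{Step 3 (One-step improvement).} Using (A4), replace $\mathfrak{h}$ by its quadratic polynomial $P(x)=\mathfrak{h}(0)+D\mathfrak{h}(0)\cdot x+\tfrac12 x^\top D^2\mathfrak{h}(0)x$; then $\|u-P\|_{L^\infty(\mathrm{B}_\rho^+)}\le C\rho^2(\|u\|_{L^\infty}+\text{data})$ for a fixed small universal $\rho$, after absorbing the $C^{1,1}$ error. \emph{Step 4 (Iteration).} Rescale $u_k(x)=\rho^{-2k}(u-P_k)(\rho^k x)$; check $u_k$ solves an equation of the same class with data still controlled by (A3) at scale $\rho^k$; apply Step 3 repeatedly to get polynomials $P_k$ with $\|D^2P_{k+1}-D^2P_k\|\le C\rho^k\mu(x_0)$, where $\mu(x_0)$ aggregates $\|u\|_{L^\infty}$, $\|f\|_{L^p}$, $\|g\|_{C^{1,\alpha}}$ localized near $x_0$. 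This produces the pointwise $C^{1,1}$-type estimate at $\mathcal{L}^n$-a.e. point, with the modulus governed by a maximal function of $|f|^p$.

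\emph{Step 5 ($W^{2,p}$ conclusion).} Apply the Calder\'on-Zygmund / Fabes-Stroock measure decay lemma (as in \cite[Ch.\ 7]{CC}): the set where the second incremental quotients exceed level $t$ has measure decaying like $t^{-p}\|f\|_{L^p}^p$ plus the $L^\infty$ contribution, hence integrating gives $\|D^2u\|_{L^p(\mathrm{B}_{1/2}^+)}\le C(\|u\|_{L^\infty}+\|f\|_{L^p}+\|g\|_{C^{1,\alpha}})$, first near the flattened boundary and, by the interior version of the same argument (dropping the boundary condition, using interior $C^{1,1}$ estimates for $F^\sharp$ which follow from (A4) by a standard reflection/interior-reduction), in $\Omega'\Subset\Omega$. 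A finite cover of $\overline\Omega$ by boundary and interior pieces, plus interpolation to control $\|Du\|_{L^p}$ and the $L^\infty$ norm, yields the global bound \eqref{2}. I expect \emph{Step 2} to be the main obstacle: one must justify the limit equation is precisely $F^\sharp(D^2\mathfrak{h},x_0)=0$ — i.e.\ that the rescalings $\tau F(\tau^{-1}\cdot)$ arising from the blow-up of second derivatives converge to $F^\sharp$ with enough uniformity to pass to the limit in the viscosity sense — and that the oblique boundary condition is stable under this limit; handling the lower-order terms $\sigma|q_1-q_2|+\xi|r_1-r_2|$ in (A1), which survive the rescaling only after noting they are lower order relative to the Hessian blow-up, also requires care.
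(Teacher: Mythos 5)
Your overall scheme coincides with the paper's: flatten the boundary by a $C^{2,\alpha}$ diffeomorphism, prove by compactness and stability an approximation lemma replacing $u$ at small scales by a solution $\mathfrak{h}$ of the frozen recession problem with the same oblique datum, use (A4) to control $\mathfrak{h}$ in $C^{1,1}$, and close with Caffarelli's measure-theoretic machinery driven by the maximal function of $|f|^{n}$. Your Steps 1--2 and the final covering are essentially Lemma \ref{Approx}, Proposition \ref{T-flat} and the proof of Theorem \ref{T1} in Section \ref{Sec4}. Where you diverge is in how Steps 3--5 are realized: the paper does not iterate quadratic polynomials pointwise but works with the paraboloid contact sets $\mathcal{G}_{\mathrm{M}}$, $\mathcal{A}_{\mathrm{M}}$, proves the density estimate $\Leb\left(\mathcal{G}_{\mathrm{M}}(u,\Omega)\cap\mathcal{Q}\right)\ge 1-\epsilon_0$ (Proposition \ref{Prop.4.6}) using the $C^{1,1}$ profile from (A4), and propagates it through the Calder\'on--Zygmund cube decomposition (Lemma \ref{Cal-Zyg} together with Lemma \ref{lemma4.8}, whose bad set explicitly includes $\{\mathcal{M}(f^{n})\ge(\mathrm{C}_0\mathrm{M}^{k})^{n}\}$) to obtain $\sum_k\mathrm{M}^{pk}\Leb(\mathcal{A}_{\mathrm{M}^{k}})<\infty$ and conclude via Proposition \ref{P1}.

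The one substantive gap is Step 4. As written, $\|D^{2}P_{k+1}-D^{2}P_{k}\|\le C\rho^{k}\mu(x_0)$ for all $k$ would make the Hessian coefficients geometrically summable at a.e.\ point, i.e.\ $u\in C^{1,1}$ --- false for $f\in L^{p}\setminus L^{\infty}$, and not even available for $f\in\mathrm{BMO}$ (there the increments are merely bounded, which is the content of Theorem \ref{BMO}, not of Theorem \ref{T1}). For $f\in L^{p}$ the iteration can only be continued up to the first scale at which the rescaled datum $\rho^{-k}\|f\|_{L^{n}(\mathrm{B}_{\rho^{k}}(x_0))}$ exceeds the smallness threshold of the approximation lemma; what must be estimated is the measure of the set of points where this stopping occurs at step $k$, compared with the distribution function of $\mathcal{M}(|f|^{n})$. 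That bridge --- precisely the role of Proposition \ref{Prop.4.6} and Lemma \ref{lemma4.8} --- is what converts the one-step approximation into the $t^{-p}$ decay you invoke in Step 5, and your Step 4 does not supply it. Two smaller points: the interior $C^{1,1}$ estimate for $F^{\sharp}$ does not follow from (A4) ``by reflection'' (the paper quotes the interior result of Pimentel--Teixeira for the interior pieces of the covering); and since the theorem concerns $L^{p}$-viscosity solutions, one still needs the reduction via smooth approximations $f_j\to f$, existence/uniqueness for the approximating oblique problems, ABP and stability, carried out in Corollary \ref{Cor}, which your proposal omits.
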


\vspace{0.4cm}

 We also supply a sharp estimate in the scenario that $f$ exceeds the scope of $L^{\infty}$ functions. Precisely, our second result concerns $\text{BMO}$ type \textit{a priori} estimates for second derivatives provided the RHS of \eqref{E1} is a BMO function and $F^{\sharp}$ enjoys classical \textit{a priori} estimates.

In contrast to the assumptions on Section \ref{section2}, we will assume, for the next Theorem, the following hypothesis on $F^{\sharp}$:
\begin{enumerate}
		
	\item[(A4)$^{\star}$] ({\bf Higher \textit{a priori} estimates})\,\,$F^{\sharp}$ fulfills a $C^{2, \psi}$ \textit{a priori} estimate up to the boundary, for some $\psi \in (0,1)$, i.e., for any $g_0 \in C^0(\mathrm{B}^+_1) \cap C^{1, \psi}(\mathrm{T}_1)$, $\beta \in C^{1, \psi}(\mathrm{T}_1)$, there exists $\mathrm{C}_{\ast}>0$, depending only on universal parameters, such that any viscosity solution of
	$$
	\left\{
	\begin{array}{rclcl}
		F^{\sharp}(D^2 \mathfrak{h}, 0,0, x) &=& 0& \mbox{in} & \mathrm{B}^+_1,\\
		\beta \cdot D\mathfrak{h}  &=& g_0(x) & \mbox{on} &\mathrm{T}_1
	\end{array}
	\right.
	$$
	belongs to $C^{2, \psi}(\mathrm{B}^{+}_1) \cap C^0(\overline{\mathrm{B}^+_1})$ and fulfills
	$$
	\|\mathfrak{h}\|_{C^{2, \psi}\left(\overline{\mathrm{B}^+_{\rho}}\right)} \le\frac{\mathrm{C}_{\ast}}{\rho^{2+\psi}} \left(\|\mathfrak{h}\|_{L^{\infty}(\mathrm{B}^+_1)} + \|g_0\|_{C^{1,\psi}(\overline{\mathrm{T}_1})}\right)\,\,\, \forall \,\,0<\rho \ll 1.
	$$
\end{enumerate}

\begin{definition}
	For a function $f \in L^1_{\text{loc}}(\R^n)$ $x_0 \in \Omega$ and $\rho>0$, we define $(f)_{x_0, \rho}$ by
	$$
	(f)_{x_0, \rho} \defeq \intav{\mathrm{B}_{\rho}(x_0) \cap \Omega} f(x) dx.
	$$
	Moreover, when $x_0 = 0$ we will denote $(f)_{\rho}$ by simplicity.
	
	An $f \in L^1_{\text{loc}(\Omega)}$ is a function of $p-$bounded mean oscillation, in short $f \in \textrm{p-BMO}(\Omega)$, if
	\begin{equation}\label{p-BMOnorm}
		\|f\|_{p-BMO(\Omega)} \defeq \sup_{\mathrm{B}_{\rho} \subseteq \Omega \atop{x_0 \in \Omega}} \left(\intav{\mathrm{B}_{\rho}(x_0) \cap \Omega} |f(x) - (f)_{x_0, \rho}|^p dx\right)^{\frac{1}{p}} < \infty
	\end{equation}
	for a constant $\mathrm{C}>0$ that is independent on $\rho$. As a consequence of the John–Nirenberg inequality, such a semi-norm is equivalent to the one in the classical BMO spaces (see \cite[pp. 763-764]{PT}).
\end{definition}

\vspace{0.4cm}

	\begin{theorem}[{\bf Boundary $p-$BMO type estimates}]\label{BMO}
	Let $u$ be an $L^{p}$-viscosity solution to
$$
\left\{
\begin{array}{rclcl}
 F(D^2u,Du,u,x) &=& f(x)& \mbox{in} &   \mathrm{B}^+_1 \\
 \beta \cdot Du&=& g(x) &\mbox{on}& \mathrm{T}_1,
\end{array}
\right.
$$
where $f \in \textrm{p-BMO}(\mathrm{B}^+_1) \cap L^{p}(\mathrm{B}^+_1)$, for some $n \le p\leq \infty$. Assume further that assumptions (A1)-(A3) and $(A4)^{\star}$ are in force.
	Then, $D^2 u \in \text{p-BMO}(\mathrm{B}^+_1)$, Moreover, the following estimate holds
{\scriptsize{
\begin{equation}\label{BMO2}
		\|D^2 u\|_{p-\textrm{BMO}\left(\overline{\mathrm{B}^{+}_{\frac{1}{2}}}\right)} \le \mathrm{C}\left(n, \lambda, \Lambda, p, \mathrm{C}_{\ast}, \mu_0, \|\beta\|_{C^{1, \alpha}(\overline{\mathrm{T}_1})}\right)\left(\|u\|_{L^{\infty}(\mathrm{B}^+_1)} + \|g\|_{C^{1, \alpha}(\overline{\mathrm{T}_1})} + \|f\|_{\textrm{p-BMO}(\mathrm{B}^+_1)}\right).
\end{equation}}}
\end{theorem}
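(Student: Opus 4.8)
The plan is to follow the same geometric tangential scheme used for Theorem \ref{T1}, but now iterating the approximation at \emph{second order} to capture the $p$-BMO modulus of $D^2u$. First I would normalize: assume $\|u\|_{L^\infty(\mathrm{B}_1^+)}+\|g\|_{C^{1,\alpha}(\overline{\mathrm{T}_1})}+\|f\|_{\textrm{p-BMO}(\mathrm{B}_1^+)}\le \delta$ for a small universal $\delta$ to be chosen, so that the problem is a small perturbation of the homogeneous recession problem; the general case follows by scaling $u\mapsto u/K$ with $K$ the right-hand side of \eqref{BMO2}. The core is an \textbf{approximation lemma}: given $\varepsilon>0$, there is $\delta>0$ such that if $u$ solves the system with data of size $\le\delta$ and $\left(\intav{\mathrm{B}_r\cap\Omega}\psi_{F^\sharp}^p\right)^{1/p}\le\delta$, then there is $\mathfrak{h}\in C^{2,\psi}(\mathrm{B}_{3/4}^+)$ solving $F^\sharp(D^2\mathfrak{h},0,0,0)=0$ in $\mathrm{B}_{3/4}^+$, $\beta\cdot D\mathfrak{h}=0$ on $\mathrm{T}_{3/4}$, with $\|u-\mathfrak{h}\|_{L^\infty(\mathrm{B}_{1/2}^+)}\le\varepsilon$ and $\|\mathfrak{h}\|_{C^{2,\psi}}$ universally bounded. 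This is proved by contradiction/compactness: a sequence $u_k$ with data shrinking to zero is, by the global $C^{1,\alpha}$ estimates (available from Theorem \ref{T1} type reasoning and oblique-boundary regularity), precompact in $C^{1}$; using the recession scaling $\tau F(\tau^{-1}X,\varsigma,s,x)\to F^\sharp$ together with (A3) and stability of $L^p$-viscosity solutions under uniform convergence of operators, the limit solves the homogeneous $F^\sharp$ problem, and $(A4)^\star$ supplies the $C^{2,\psi}$ regularity of the limit.

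Next I would run the \textbf{iteration}. Fix $\mu\in(0,\alpha)$ close to $\alpha$ and choose a radius $0<\rho\ll1$ and then $\varepsilon$ via the $C^{2,\psi}$ estimate so that the quadratic polynomial $\mathfrak{P}_1(x)=\mathfrak{h}(0)+D\mathfrak{h}(0)\cdot x+\tfrac12 x^TD^2\mathfrak{h}(0)x$ satisfies $\|u-\mathfrak{P}_1\|_{L^\infty(\mathrm{B}_\rho^+)}\le \rho^{2+\mu}$, with $D^2\mathfrak{P}_1$ bounded and $\beta\cdot D\mathfrak{P}_1$ matching $g$ at the origin up to the right order. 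The claim, proved by induction, is that there is a sequence of quadratic polynomials $\mathfrak{P}_k$ with $\|u-\mathfrak{P}_k\|_{L^\infty(\mathrm{B}_{\rho^k}^+)}\le\rho^{k(2+\mu)}$, and with the coefficients forming a Cauchy-type sequence: $|D^2\mathfrak{P}_{k+1}-D^2\mathfrak{P}_k|\le C\rho^{k\mu}$, and similarly controlled lower-order coefficients. The inductive step applies the approximation lemma to the rescaled function $v_k(x)=\big(u(\rho^k x)-\mathfrak{P}_k(\rho^k x)\big)/\rho^{k(2+\mu)}$ on $\mathrm{B}_1^+$ — one checks $v_k$ is an $L^p$-viscosity solution of a rescaled system $F_k(D^2v_k,\dots)=f_k$ whose data inherit the smallness (here the $p$-BMO hypothesis on $f$ is exactly what makes $\|f_k\|_{p\text{-BMO}}$ scale-invariantly small, since subtracting the average $(f)_{\rho^k}$ is absorbed into a constant-coefficient adjustment of $\mathfrak{P}_k$), and whose recession operator still satisfies $(A4)^\star$ with the same constant.

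From the iteration I would extract the conclusion in the standard way. The sequence $D^2\mathfrak{P}_k\to \mathfrak{M}_\infty(0)$ defines (after translating the argument to a general boundary point $x_0\in\mathrm{B}_{1/2}^+$, using the flattening of $\partial\Omega$ and that the estimates are uniform in $x_0$) a candidate for the second-order Taylor expansion of $u$, and the geometric decay $\|u-\mathfrak{P}_k\|_{L^\infty(\mathrm{B}_{\rho^k}^+)}\le\rho^{k(2+\mu)}$ together with interpolation gives, for every $r\le r_0$ and every $x_0$, $\inf_{\mathfrak{M}\in\textrm{Sym}(n)}\intav{\mathrm{B}_r(x_0)\cap\mathrm{B}_1^+}|D^2u-\mathfrak{M}|^p\,dx\le C r^{p\mu}$ — a Campanato-type characterization that is even stronger than $p$-BMO, whence $D^2u\in\textrm{p-BMO}(\mathrm{B}_{1/2}^+)$ with the bound \eqref{BMO2}. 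I expect the \textbf{main obstacle} to be the boundary analysis inside the approximation lemma: establishing the compactness in $C^1$ up to the flat boundary for $L^p$-viscosity solutions under the oblique condition $\beta\cdot Du=g$, and guaranteeing that the limiting function genuinely satisfies the homogeneous oblique problem for $F^\sharp$ in the viscosity sense — this requires uniform global $C^{1,\alpha}$ boundary estimates together with a careful stability argument matching the recession scaling with the $L^p$-average continuity of the coefficients in (A3), and a bookkeeping of how the oblique data $g$ and its first-order Taylor jet are peeled off at each iteration step without destroying the smallness.
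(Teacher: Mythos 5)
Your overall architecture (a compactness approximation lemma against the recession profile, followed by a dyadic iteration with quadratic polynomials, then conversion to a mean-oscillation bound on $D^2u$) is the same as the paper's. However, there is a genuine quantitative error at the heart of your iteration that makes it break down. You claim the inductive decay $\|u-\mathfrak{P}_k\|_{L^\infty(\mathrm{B}_{\rho^k}^+)}\le\rho^{k(2+\mu)}$ with $\mu>0$, Cauchy Hessian coefficients $|D^2\mathfrak{P}_{k+1}-D^2\mathfrak{P}_k|\le C\rho^{k\mu}$, and a resulting Campanato estimate $\inf_{\mathfrak{M}}\intav{\mathrm{B}_r}|D^2u-\mathfrak{M}|^p\,dx\le Cr^{p\mu}$. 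That conclusion is $C^{2,\mu}$ regularity, which is strictly stronger than $p$-BMO and is false for a general $f\in\textrm{p-BMO}$ (indeed it already fails for $f\in L^\infty$). The failure is visible in your own inductive step: with $v_k(x)=\bigl(u(\rho^kx)-\mathfrak{P}_k(\rho^kx)\bigr)/\rho^{k(2+\mu)}$ one has $D^2v_k(x)=\rho^{-k\mu}\bigl(D^2u(\rho^kx)-\mathcal{A}_k\bigr)$, so the renormalized equation carries a source of size $\rho^{-k\mu}\bigl(f(\rho^k\cdot)-\text{const}\bigr)$, whose $p$-BMO norm is $\rho^{-k\mu}\|f\|_{\textrm{p-BMO}}$ and blows up as $k\to\infty$. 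The scale invariance of the BMO seminorm that you invoke only holds for the \emph{unamplified} rescaling; it cannot absorb the extra factor $\rho^{-k\mu}$. Hence the smallness hypothesis of your approximation lemma cannot be restored at step $k+1$, and the induction collapses after finitely many steps.

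The paper's proof avoids exactly this: the iteration is run with the pure second-order scaling $v_k(x)=(v-\mathfrak{P}_k)(r^kx)/r^{2k}$ (no $\mu$-gain), the polynomials are pinned by $F^{\sharp}(\mathcal{A}_k,x)=(\tilde f)_1$ so that the average of the source is absorbed into the Hessian coefficient rather than into a decaying remainder, and the increments satisfy only the uniform bound $|\mathcal{A}_k-\mathcal{A}_{k-1}|\le \mathrm{C}_{\sharp}$ (bounded, not summable) — which is precisely the BMO, as opposed to Campanato, conclusion. A second, more minor gap: passing from the $L^\infty$ decay $\sup_{\mathrm{B}_{r^k}^+}|v-\mathfrak{P}_k|\le r^{2k}$ to the $L^p$ bound $\intav{\mathrm{B}_\rho^+}|D^2v-\mathcal{A}_k|^p\le C$ is not ``interpolation''; it requires applying the $W^{2,p}$ estimate of Theorem \ref{T1} to each normalized $v_k$ on the unit scale, and then a triangle inequality to replace $\mathcal{A}_k$ by $\intav{\mathrm{B}_\rho^+}D^2v$. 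With the exponent corrected to $r^{2k}$ and these two steps made explicit, your scheme coincides with the paper's argument.
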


\begin{remark}[{\bf $C^{1, \text{Log-Lip}}$ estimates}] Since viscosity solutions to \eqref{E1} have a Hessian control in the $L^p-$ average sense, then by invoking the embedding result from \cite[Lemma 1]{AZBED}, we may establish $C^{1, \text{Log-Lip}}$ type estimates: Let $h: \mathrm{B}^{+}_{1} \rightarrow \mathbb{R}$ be a function such that for all $1 \le i, j\le n$, locally $D_{ij}h \in BMO_{p}(\mathrm{B}_{1}^{+})$. Then, $h \in C_{loc}^{1, \text{Log-Lip}}(\mathrm{B}_{1}^{+})$. In other words,
{\scriptsize{
$$
\displaystyle \sup_{\rho \in (0, 1)} \sup_{z \in \partial \Omega}\sup_{\mathrm{B}_{\rho}(z)\cap\Omega} \frac{|u(x)-[u(z)+D u(z)\cdot (x-z)]|}{\rho^2\ln \rho^{-1}} \leq \hat{\mathrm{C}}\cdot \left(\|u\|_{L^{\infty}(\Omega)} + \|g\|_{C^{1, \alpha}(\partial \Omega)} + \|f\|_{\textrm{p-BMO}(\Omega)}\right),
$$}}
for some constant $\hat{\mathrm{C}} = \hat{\mathrm{C}}(n, \lambda, \Lambda, p, \mathrm{C}_{\ast}, \mu_0, \|\beta\|_{C^{1, \alpha}(\overline{\mathrm{T}_1})},\|\partial \Omega\|_{C^{1,1}})$ (cf. \cite[Theorem 5.2]{daSR19} for related boundary estimates). Finally, we must quote \cite{IMN17}, where the authors prove essentially close-to-optimal assumptions on the $f(x,u)$, for which $C_{\text{loc}}^{1,1}$ regularity of $W^{2,p}$ solutions in the classical semi-linear equation $\Delta u = f(x,u)$ holds true.
\end{remark}

We would like to mention that although our manuscript has been strongly motivated by recent works \cite{BJ}, \cite{daSR19} and \cite{PT}, our approach has required a sort of non-trivial adaptation due to the presence of a non-homogeneous oblique boundary condition and the asymptotic nature of the limiting operator. In addition, unlike \cite{ZZZ21}, our findings provide additional quantitative applications, such as BMO type estimates (see Section \ref{Section5}), $W^{2,p}$--regularity to obstacle-type problems with oblique boundary conditions (see Section \ref{obst}), and density results for oblique type problems (see Section \ref{Sec_Density}) via tangential methods. 

Additionally, our recession profiles \ref{Reces} encode more general operators than linear ones, as addressed in \cite{ZZZ21} (see also \cite{BOW16}). Our results, in particular, extend, in the oblique boundary scenario, the former results from  \cite{BOW16}, \cite{PT} and \cite{ST} (see also \cite{daSR19}), and to some extent, those from \cite{BOW16}, \cite{Kry13}, \cite{Kry17} and \cite{ZZZ21} by employing techniques tailored to the general framework of the fully nonlinear models under relaxed convexity assumptions and oblique boundary datum.

Finally, we also highlight that Theorems \ref{T1} and \ref{BMO} can be extended to a more general class of models, including fully nonlinear parabolic PDEs. For sake of simplicity and clarity, we have decided to consider just the elliptic scenario in this paper. We pretend to study such issues in a forthcoming project.

\subsection{Applications to obstacle type problems}

\hspace{0.4cm} In closing, we would like to highlight that boundary Hessian estimates are also useful in the context of obstacle type problems. Physically, the classical obstacle problem refers to the equilibrium position of an elastic membrane (i.e., $u: \Omega \to \R$), whose boundary is held fixed (i.e., $u_{|\partial \Omega} = g$), lying above a given obstacle and subject to the action of external forces, \textit{e.g.}, friction, tension, and/or gravity ($f:\Omega \to \R$). More specifically, given an elastic membrane $u$ attached to a fixed boundary $\partial \Omega$ and an obstacle $\phi$, we seek the equilibrium position of the membrane when we move it downwards toward the obstacle, satisfying: 
$$
\left\{
\begin{array}{rcll}
  \Delta u & \le & f & \text{in the weak sense in} \quad \Omega \\
  \Delta u & = & f & \text{in the weak sense in} \quad \{u> \varphi\} \\
  u & \ge & \varphi & \text{in} \quad \Omega
\end{array}
\right.
$$
with $u-g \in H^1_0(\Omega)$.

We refer \cite{BLOP18} for Calder\'{o}n-Zygmund estimates for elliptic and parabolic obstacle problems in non-divergence form (under convex structure) with discontinuous coefficients and irregular obstacles. We must quote \cite{Lieb01} concerning gradient estimates for obstacle problems of elliptic models (under convex/convex structure) with oblique boundary conditions.

Now, we would like to focus our attention to the recent work \cite[Theorem 1.1.]{BJ1}, where $W^{2,p}$ estimates for the obstacle problem with oblique boundary conditions
\begin{equation} \label{obss1}
	\left\{
	\begin{array}{rclcl}
		F(D^2 u,Du,u,x) &\le& f(x)& \mbox{in} &   \Omega \\
		(F(D^2 u, Du, u,x) - f(x))(u(x)-\phi(x)) &=& 0 &\mbox{in}& \Omega\\
		u(x) &\ge& \phi(x) &\mbox{in}& \Omega\\
		\beta(x) \cdot Du(x)  + \gamma(x)u(x) &=& g(x) &\mbox{on}& \partial \Omega\\
	\end{array}
	\right.
\end{equation}
for a given obstacle $\phi \in W^{2,p}(\Omega)$ satisfying $\beta(x) \cdot D \phi(x) \ge g(x)$ a.e., on $\partial \Omega$, have been obtained in the case when $F$ is a convex operator and $\gamma  = g \equiv 0$.

Therefore, motivated by above problem, we will investigate $W^{2,p}$ regularity estimates for obstacle-type problems \eqref{obss1} under an asymptotic convexity assumption (weaker than convexity) and non-homogeneous oblique conditions.

For our purpose, it is necessary to make the following assumptions:
\begin{enumerate}
\item[(A5)] There exists a modulus of continuity $\omega: [0,+\infty) \to [0,+\infty)$ with $\omega(0)=0$, such that
	$$
	F(\mathrm{X}_1, q, r,x_1) - F(\mathrm{X}_2,q,r,x_2) \le \omega\left(|x_1-x_2|\right)\left[(|q| +1) + \alpha_0 |x_1-x_2|^2\right]
	$$
	holds for any $x_1,x_2 \in \Omega$, $q \in \mathbb{R}^n$, $r \in \mathbb{R}$, $\alpha_0 >0$ and $\mathrm{X}_1,\mathrm{X}_2 \in \textrm{Sym}(n)$ satisfying
$$
		- 3 \alpha_0
		\begin{pmatrix}
			\mathrm{Id}_n& 0 \\
			0& \mathrm{Id}_n
		\end{pmatrix}
		\leq
		\begin{pmatrix}
			\mathrm{X}_2&0\\
			0&-\mathrm{X}_1
		\end{pmatrix}
		\leq
		3 \alpha_0
		\begin{pmatrix}
			\mathrm{Id}_n & -\mathrm{Id}_n \\
			-\mathrm{Id}_n& \mathrm{Id}_n
		\end{pmatrix},	
$$
	where $\mathrm{Id}_n$ is the identity matrix.
	\item[(A6)] $F$ is a proper operator in the following sense:
	$$
d\cdot (r_{2}-r_{1}) \leq F(\mathrm{X},q,r_{1},x)-F(\mathrm{X},q,r_{2},x),
	$$
	for any $\mathrm{X} \in \text{Sym}(n)$, $r_1,r_2 \in \mathbb{R}$, with $r_{1}\leq r_{2}$, $x \in \Omega$, $q \in \mathbb{R}^n$, and some $d >0$.
\end{enumerate}

The previous assumptions are invoked to assure the validity of the Comparison Principle for oblique derivatives
problems like \eqref{E1} (see \cite[Theorem 2.10]{CCKS} and \cite[Theorem 7.17]{Leiberman}), ensuring the use of Perron's Method for viscosity solutions (see Lieberman's Book \cite[Chapter 7.4 and 7.6]{Leiberman}).

Finally, our third result addresses a $W^{2,p}$-regularity theory for \eqref{obss1}, that does not rely on any additional regularity assumptions on nonlinearity $F$.

\begin{theorem}[{\bf Obstacle problems with \textit{oblique} boundary conditions}]\label{T3}
	Let $u$ be an $L^p$-viscosity solution of \eqref{obss1} with $n<p< \infty$, where $F$ satisfies the structural assumption (A1)-(A4) and (A5)-(A6), $\partial \Omega \in C^{3}$, $f \in L^p(\Omega)$, $\beta \in C^{2}(\partial \Omega)$ with $\beta \cdot {\bf \overrightarrow{\bf{n}}} \ge \mu_0$ for some $\mu_0 >0$, $\phi \in W^{2,p}(\Omega)$ and $g \in C^{1,\alpha}(\partial \Omega)$ (for some $\alpha \in (0, 1)$). Then, $u \in W^{2,p}(\Omega)$ with the following estimate
	\begin{equation} \label{obs2}
		\|u\|_{W^{2, p}(\Omega)} \le \mathrm{C}_0\cdot \left( \|f\|_{L^p(\Omega)}+ \|\phi\|_{W^{2,p}(\Omega)}  +\|g\|_{C^{1,\alpha}(\partial \Omega)} \right).
	\end{equation}
where $\mathrm{C}_0 = \mathrm{C}_0(n,\lambda,\Lambda, p,\mu_0, \sigma, \omega, \|\beta\|_{C^2(\partial \Omega)}, \|\gamma\|_{C^2(\partial \Omega)}, \partial \Omega, \textrm{diam}(\Omega), \theta_0)$.
\end{theorem}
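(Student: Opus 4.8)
\textbf{Proof strategy for Theorem \ref{T3}.}

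The plan is to reduce the obstacle problem \eqref{obss1} to a boundary-value problem of the type covered by Theorem \ref{T1} through a penalization/approximation scheme, and then to pass to the limit using the a priori estimate \eqref{2}. First I would introduce, for $\varepsilon > 0$, a smooth penalization function $\Psi_{\varepsilon}: \R \to \R$ with $\Psi_{\varepsilon} \le 0$, $\Psi_{\varepsilon}(t) = 0$ for $t \ge \varepsilon$, $\Psi_{\varepsilon}' \ge 0$, $\Psi_{\varepsilon}'' \le 0$ (or the analogous monotone convention), and $\Psi_{\varepsilon}(t) \to -\infty$ as $t \to -\infty$, and consider the penalized equation $F(D^2 u_{\varepsilon}, Du_{\varepsilon}, u_{\varepsilon}, x) = f_{\varepsilon}(x) + \Psi_{\varepsilon}(u_{\varepsilon} - \phi)$ in $\Omega$, together with the oblique boundary condition $\beta \cdot D u_{\varepsilon} + \gamma u_{\varepsilon} = g$ on $\partial\Omega$, where $f_{\varepsilon}$ is a smooth approximation of $f$ in $L^p$. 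Existence of a viscosity solution $u_{\varepsilon}$ follows from Perron's method, whose applicability is guaranteed by the Comparison Principle — this is exactly where assumptions (A5) and (A6) enter, as indicated in the excerpt (cf. \cite[Theorem 2.10]{CCKS}, \cite[Theorem 7.17]{Leiberman}, and \cite[Chapter 7.4, 7.6]{Leiberman}).

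The next step is to derive $\varepsilon$-independent bounds. A barrier argument (comparing $u_{\varepsilon}$ against solutions of the obstacle-free equation with appropriately shifted boundary data, using the compatibility condition $\beta \cdot D\phi \ge g$ on $\partial\Omega$ and properness (A6)) yields $\|u_{\varepsilon}\|_{L^{\infty}(\Omega)} \le \mathrm{C}(\|f\|_{L^p(\Omega)} + \|\phi\|_{W^{2,p}(\Omega)} + \|g\|_{C^{1,\alpha}(\partial\Omega)})$. The key point is a uniform control on the penalization term: one shows $0 \le -\Psi_{\varepsilon}(u_{\varepsilon} - \phi) \le \mathrm{C}$ in $L^p$, since $v_{\varepsilon} := u_{\varepsilon} - \phi$ satisfies (in the viscosity sense, with $\phi \in W^{2,p}$ so that $D^2\phi$ acts as an $L^p$ ingredient absorbed into the right-hand side via the Pucci bounds in (A1)) an inequality of the form $\mathscr{P}^{-}_{\lambda,\Lambda}(D^2 v_{\varepsilon}) - \sigma|Dv_{\varepsilon}| - \xi|v_{\varepsilon}| - \Psi_{\varepsilon}(v_{\varepsilon}) \le f_{\varepsilon} - F(D^2\phi, D\phi, \phi, x) =: \tilde f_{\varepsilon} \in L^p$, and testing against the region where $v_{\varepsilon} < 0$ gives the bound. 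With $h_{\varepsilon} := f_{\varepsilon} + \Psi_{\varepsilon}(u_{\varepsilon} - \phi)$ uniformly bounded in $L^p(\Omega)$, Theorem \ref{T1} applied to $u_{\varepsilon}$ (noting that $F$ together with the right-hand side $h_{\varepsilon}$ satisfies (A1)–(A4), which are hypotheses on $F$ and $F^{\sharp}$ alone, not on the source term) yields the uniform estimate $\|u_{\varepsilon}\|_{W^{2,p}(\Omega)} \le \mathrm{C}(\|u_{\varepsilon}\|_{L^{\infty}(\Omega)} + \|h_{\varepsilon}\|_{L^p(\Omega)} + \|g\|_{C^{1,\alpha}(\partial\Omega)}) \le \mathrm{C}_0(\|f\|_{L^p(\Omega)} + \|\phi\|_{W^{2,p}(\Omega)} + \|g\|_{C^{1,\alpha}(\partial\Omega)})$.

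Finally I would pass to the limit $\varepsilon \to 0^+$: by the uniform $W^{2,p}$ bound and compact embedding, up to a subsequence $u_{\varepsilon} \to u$ strongly in $W^{1,p}_{\loc}$ and weakly in $W^{2,p}(\Omega)$, and (using $p > n$ so that $W^{2,p} \hookrightarrow C^{1,\alpha}$) uniformly with their gradients on $\overline{\Omega}$; the stability of $L^p$-viscosity solutions (cf. \cite{CCKS}) shows that $u$ is an $L^p$-viscosity solution of the limiting problem. The sign and monotonicity of $\Psi_{\varepsilon}$ force $u \ge \phi$ in $\Omega$; on $\{u > \phi\}$ the penalization term vanishes in the limit so $F(D^2 u, Du, u, x) = f$ there, while everywhere $F(D^2 u, Du, u, x) \le f$; and the boundary condition is preserved by the uniform $C^1$ convergence. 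Lower semicontinuity of the norm under weak convergence transfers the uniform bound to $u$, giving \eqref{obs2}. The main obstacle I anticipate is making the penalization argument fully rigorous in the \emph{viscosity} (rather than Sobolev/weak) framework — in particular justifying that the comparison/barrier estimates and the $L^p$-bound on $\Psi_{\varepsilon}(u_{\varepsilon} - \phi)$ go through when $\phi$ is merely $W^{2,p}$ and $u_{\varepsilon}$ is an $L^p$-viscosity solution; this requires care in combining the $C^0$-viscosity machinery (Perron, comparison under (A5)–(A6)) with the $L^p$-viscosity a priori estimates of Theorem \ref{T1}, and is presumably why the hypotheses on $\partial\Omega$, $\beta$ and $\gamma$ are strengthened to $C^3$, $C^2$, $C^2$ here relative to Theorem \ref{T1}.
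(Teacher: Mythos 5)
Your overall architecture (penalize, get $\varepsilon$-uniform $W^{2,p}$ bounds from Theorem \ref{T1}, pass to the limit by weak compactness and stability) matches the paper, but the penalization scheme is genuinely different, and the difference is exactly where your argument has a gap. The paper does \emph{not} use the classical unbounded penalty $\Psi_{\varepsilon}(t)\to-\infty$. It sets $\mathrm{h}(x)=f(x)-F(D^2\phi,D\phi,\phi,x)\in L^p$ and solves
$F(D^2u_{\varepsilon},Du_{\varepsilon},u_{\varepsilon},x)=\mathrm{h}^+(x)\,\Psi_{\varepsilon}(u_{\varepsilon}-\phi)+f(x)-\mathrm{h}^+(x)$
with a \emph{bounded} cutoff $0\le\Psi_{\varepsilon}\le1$. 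The right-hand side is then trivially bounded in $L^p$ by $\mathrm{C}(\|f\|_{L^p}+\|\phi\|_{W^{2,p}})$ with no estimate on the penalty term needed; the price is that $\mathrm{h}^+\Psi_{\varepsilon}(u_{\varepsilon}-\phi)$ has the wrong monotonicity in $u_{\varepsilon}$, so existence is obtained by freezing $\Psi_{\varepsilon}(v_0-\phi)$, solving via Perron, and applying Schauder's fixed point theorem to the resulting compact map $\mathcal{T}:L^p\to W^{2,p}\hookrightarrow L^p$. The constraint $u\ge\phi$ is then recovered not from blow-up of the penalty but from a comparison argument on $\mathcal{O}_j=\{u_{\varepsilon_j}<\phi\}$, where the equation reads $F(\cdot)=f-\mathrm{h}^+\le f-\mathrm{h}=F(D^2\phi,D\phi,\phi,x)$ and $u_{\varepsilon_j}=\phi$ on $\partial\mathcal{O}_j\setminus\partial\Omega$, forcing $\mathcal{O}_j=\emptyset$ for every $j$ (this is where (A5)--(A6) are used).

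The concrete gap in your route is the uniform $L^p$ control of $\Psi_{\varepsilon}(u_{\varepsilon}-\phi)$, which you describe only as ``testing against the region where $v_{\varepsilon}<0$.'' The classical version of that step evaluates the equation pointwise at a negative minimum of $u_{\varepsilon}-\phi$ (or a.e.\ on $\{u_{\varepsilon}<\phi\}$), which requires second-order touching of $\phi$; with $\phi$ merely in $W^{2,p}$ and $u_{\varepsilon}$ an $L^p$-viscosity solution this cannot be done pointwise, and at boundary minima one must additionally exploit the oblique condition and the compatibility $\beta\cdot D\phi\ge g$. You correctly identify this as the main obstacle but do not resolve it, and without that bound the $\varepsilon$-uniform application of Theorem \ref{T1} collapses. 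The paper's bounded penalization is designed precisely to make this step unnecessary, so if you want to keep your scheme you must either supply a genuine viscosity-theoretic proof of the penalty bound (e.g.\ via an ABP-type argument applied to $u_{\varepsilon}-\phi$ with $D^2\phi$ treated as an $L^p$ ingredient) or switch to the bounded-penalty formulation.
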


In conclusion, we must stress that similar Hessian estimates hold true for obstacle problems with Dirichlet boundary conditions:
$$
	\left\{
	\begin{array}{rclcl}
		F(D^2 u,Du,u,x) &\le& f(x)& \mbox{in} &   \Omega \\
		(F(D^2 u, Du, u,x) - f(x))(u(x)-\phi(x)) &=& 0 &\mbox{in}& \Omega\\
		u(x) &\ge& \phi(x) &\mbox{in}& \Omega\\
		u(x)   &=& g(x) &\mbox{on}& \partial \Omega
	\end{array}
	\right.
$$
by invoking the related global $W^{2, p}$ regularity estimates addressed in \cite[Theorem 2.7]{BOW16} and \cite[Theorems 1.1]{daSR19} with the following estimate
$$
 \|u\|_{W^{2, p}(\Omega)} \le \mathrm{C}(\verb"universal")\cdot \left( \|f\|_{L^p(\Omega)}+ \|\phi\|_{W^{2,p}(\Omega)}  +\|g\|_{W^{2,p}(\partial \Omega)} \right) \quad \text{for} \quad n<p< \infty.
$$

Finally, we would like to mention the interesting manuscript \cite{Indr19}, in which the author utilized the regularity (in the case $p=\infty$) to prove a conjecture on the geometry of the free and fixed boundaries in the setting of fully nonlinear obstacle problems (cf. also \cite{Indrei19} for related regularity results).

\subsection{Hessian estimates to models in non-divergence form: State-of-Art }

\hspace{0.4cm} Regularity estimates in Sobolev spaces for nonlinear models have instituted an important chapter in the modern history of the elliptic theory of viscosity solutions. Moreover, because of its non-variational nature, obtaining sharp integrability properties of solutions of such elliptic operators have been a demanding task throughout the last years. One of the most pressing issues was whether $W^{2,p}$ a priori estimates could be addressed for any fully nonlinear operator, thereby attracting notable efforts from the academic community over the last three decades or so, while remaining open to the advancement of modern techniques.

In this direction, Caffarelli proved in his seminal work \cite[Theorem 7.1]{Caff1} that $C^0-$viscosity solutions to
  \begin{equation}\label{EqCaf}
    F(D^2 u, x) = f(x) \quad \text{in} \quad \mathrm{B}_1
  \end{equation}
satisfy an interior $W^{2,p}-$regularity for $n < p < \infty$,  with the following estimate
  $$
  	\|u\|_{W^{2, p}\left(\mathrm{B}_{\frac{1}{2}}\right)} \le \mathrm{C}(n, p, \lambda, \Lambda )\left(\|u\|_{L^{\infty}(\mathrm{B}_1)} + \|f\|_{L^p(\mathrm{B}_1)}\right),
  $$
  provided a small oscillation of $F(\mathrm{X},x)$ in the variable $x$ and $C^{1,1}$ \textit{a priori} estimates for homogeneous equation with ``frozen coefficients'' $F(D^2 w, x_0)=0$ are in force. Furthermore, Caffarelli also demonstrated that for any $p<n$, there exists a fully nonlinear operator fulfilling the previous hypothesis, for which $W^{2,p}-$estimates fail to hold. In this regard, Dong-Kim's work (cite{DK91}) is worth mentioning for notable examples of linear elliptic operators in non-divergence form with piecewise constant coefficients, whose solutions lack $W^{2, p}$ estimates.

  A few years later, Caffarelli's $W^{2, p}-$estimates were extended by Escauriaza in \cite[Theorem 1]{Es93}
  in the context of $L^n-$viscosity solutions to the range of exponents $n - \varepsilon_0 < p < \infty$ with $\varepsilon_0  = \varepsilon_0\left(\frac{\Lambda}{\lambda}, n\right) \in \left(\frac{n}{2}, n\right)$ (the Escauriaza's constant), which provides the minimal range of integrability for which the Harnack inequality (resp. H\"{o}lder regularity) holds for viscosity solutions to \eqref{EqCaf}. Furthermore, when the boundary of the domain is smooth enough, namely $C^{1,1}$, a global $W^{2,p}$ estimate is obtained for $n - \varepsilon_0< p < \infty$ by Winter in \cite[Theorem 4.5]{Winter}.

  In the aftermath, Byun-Lee-Palagachev in \cite{BLP} extend the Escauriaza-Winter's results by studying the global regularity of viscosity solutions to the following Dirichlet problem for a fully nonlinear uniformly elliptic equation:
\begin{equation}\label{Eq-Weight}
  \left\{
\begin{array}{rclcl}
 F(D^2u,Du,u,x) &=& f(x)& \mbox{in} &   \Omega \\
 u(x) &=& 0 &\mbox{on}& \partial \Omega
\end{array}
\right.
\end{equation}
in weighted Lebesgue spaces. Indeed, under appropriate structural conditions, if $f\in L^p_w(\Omega)$ for $p>n-\varepsilon_0$ and $w \in \mathcal{A}_{\frac{p}{n-\varepsilon_0}}$ (Muckenhoupt classes), then the problem \eqref{Eq-Weight} admits a unique solution $u \in W^{2,p}_w(\Omega)$ such that
$$
\|u\|_{W^{2, p}_w\left(\Omega\right)} \le \mathrm{C}(\verb"universal")\|f\|_{L^p_w(\Omega)}.
$$

  Recently, in a new landmark for this theory, Caffarelli's local $W^{2, p}$ regularity estimates were extended by Pimentel-Teixeira in \cite{PT}. Precisely, the novelty with respect to the Caffarelli's results is the concept of \textit{Recession} operator (a sort of tangent profile for $F$ at ``infinity'', see \eqref{Reces}). In this context, the authors relaxed the hypothesis of $C^{1,1}$ \textit{a priori} estimates for solutions of equations without dependence on $x$, by assuming that $F$ must be ``convex or concave'' only at the ends of $\textit{Sym}(n)$. In this setting, the authors proved that any viscosity solution to
 \begin{equation}\label{Eq-Asymp-Local}
    	F(D^2 u) =f(x) \quad \textrm{in} \quad \mathrm{B}_1,
 \end{equation}
 where $f \in L^{p}(\mathrm{B}_1) \cap C^0(\mathrm{B}_1)$, fulfills $u \in W^{2,p}_{loc}(\mathrm{B}_1)$ with an \textit{a priori} estimate
 $$
	\|u\|_{W^{2, p}\left(\mathrm{B}_{\frac{1}{2}}\right)} \le \mathrm{C}(n, p, \lambda, \Lambda)\left(\|u\|_{L^{\infty}(\mathrm{B}_1)} + \|f\|_{L^p(\mathrm{B}_1)}\right).
$$

We must quote similar global $W^{2, p}-$estimates independently proved by Byan \textit{et al}. in \cite[Theorems 2.5 and 2.7]{BOW16} for solutions to asymptotically linear operators with a zero Dirichlet boundary condition. Their approach is based on a proper transformation that converts a given asymptotically elliptic PDE to a suitable uniformly elliptic one. We also cite Silvestre-Teixeira's work \cite[Theorems 1 and 4]{ST} for an interesting survey on this theme in the setting of sharp gradient estimates.

In the sequel, global $W^{2,p}$, $\text{BMO}_p$ and $C^{1, \text{Log-Lip}}$ regularity estimates, in the context of asymptotic convex operators (see Definition \eqref{DefAC}), i.e.,
$$
    \left\{
\begin{array}{rclcl}
 F(D^2u,Du,u,x) &=& f(x)& \mbox{in} &   \Omega\\
 u(x) &=& g(x) &\mbox{on} & \partial \Omega
\end{array}
\right.
$$
were addressed by da Silva-Ricarte in \cite[Theorems 1.1, 1.2 and 5.2]{daSR19}. Precisely, they obtain
$$
\displaystyle   \|u\|_{W^{2, p}\left(\Omega\right)} \le \mathrm{C}(n, p, \lambda, \Lambda, \|\partial \Omega\|_{C^{1, 1}})\left(\|u\|_{L^{\infty}(\Omega)} + \|f\|_{L^p(\Omega)} + \|g\|_{W^{2,p}(\partial \Omega)}\right).
$$
The proof of such estimates in \cite{daSR19} is based on the successful adaptation of compactness arguments inspired by the ideas from Pimentel-Teixeira \cite{PT}, Silvestre-Teixeira \cite{ST}, and Winter \cite{Winter}. We also refer the reader to Lee's work \cite{Lee19} for Hessian estimates for solutions of \eqref{Eq-Asymp-Local} in the framework of weighted Orlicz spaces.

Among other works on this research topic, we must cite the sequence of Krylov's fundamental manuscripts \cite{Kry12}, \cite{Kry13} and \cite{Kry17}, which are associated with the existence of $L^p-$viscosity
solutions under either relaxed or no convexity assumptions on $F$.

Now, concerning problems with oblique boundary conditions, Hessian estimates for viscosity solutions of elliptic equations like \eqref{E1} has been regarded as an important issue in the study of PDEs for a long time. The theory of this subject has also been improved over the last several decades. For general fully nonlinear problems like \eqref{E1}, the existence and uniqueness for viscosity solutions of oblique boundary problems was proved in \cite{Hi} and \cite{Leiberman}.

Inspired by the ideas from Caffarelli's fundamental work \cite{Caff1}, Byan and Han in \cite{BJ} establish global $W^{2,p}-$estimates for viscosity solutions of convex fully nonlinear elliptic models of the form:
$$
\left\{
\begin{array}{rclcl}
 F(D^2u,Du,u,x) &=& f(x)& \mbox{in} &   \Omega \\
 \beta(x) \cdot Du(x) &=& 0 &\mbox{on}& \partial \Omega.
\end{array}
\right.
$$
Moreover, the following estimate holds
$$
	\|u\|_{W^{2,p}(\Omega)} \le C(\verb"universal")\left( \|u\|_{L^{\infty}(\Omega)} + \|f\|_{L^p(\Omega)} \right),
$$
It is also worth highlighting that $C^{0, \alpha}$, $C^{1,\alpha}$ and $C^{2, \alpha}$ boundary regularity were addressed for the Neumann problem on flat boundaries by Milakis-Silvestre in \cite{Sil1}. Moreover, such results were recently extended to the general oblique scenario by Li-Zhang in \cite{LiZhang}.

Finally, we must refer to the work of Zhang \textit{et al.} \cite{ZZZ21}, which establishes $W^{2,p}-$regularity for viscosity solutions of fully nonlinear elliptic equations with a homogeneous oblique derivative boundary condition. In this context, the nonlinearity is assumed to fulfill a certain asymptotic regularity condition.

In conclusion, taking such historical advances into account, in this work, we aim to obtain a global $W^{2,p}$ estimates for a large class of nonlinear elliptic PDEs. To that end, we only assume that $F(\mathrm{X}, \varsigma, s, x)$ is asymptotically elliptic, that is, we establish global Sobolev \textit{a priori} estimates for \eqref{E1} solutions under a relaxed convexity assumption (a suitable asymptotic structure of $F$ just at "infinity" of $\textit{Sym}(n)$ according to the Definition \ref{DefAC}).

This paper is organized as follows: Section \ref{section2} contains the main notations and preliminary results on which we will work throughout this manuscript. In Section \ref{section3}, we present the \textit{Modus Operandi}, i.e., the tangential approximation mechanism used in order to relate the estimates coming from \textit{Recession} operator $F^{\sharp}$ with its original counterpart $F$. Section \ref{Sec4} is devoted to proving Theorem \ref{T1}. In Section \ref{Section5} we establish the regularity estimates in the borderline setting, namely for $\textrm{p-BMO}$ spaces. Finally, in the final sections, we establish the global $W^{2,p}$ estimate for obstacle problems with oblique boundary conditions, and a density result in a suitable class of viscosity solutions.


\section{Preliminaries and auxiliary results} \label{section2}

\hspace{0.4cm}To \eqref{E1}, we introduce the appropriate notions of viscosity solutions.

\begin{definition}[{\bf $C^{0}-$viscosity solutions}]\label{VSC_0} Let $F$ be a $\left(\lambda, \Lambda, \sigma, \xi\right)-$elliptic operator and $\Gamma \subset \partial\Omega$ a relatively open set. A function $u \in C^0(\overline{\Omega})$  is said a $C^{0}-$viscosity solution  if the following condition hold:
\begin{enumerate}
\item[a)] for all $ \forall\,\, \varphi \in C^{2} (\Omega \cup \Gamma)$ touching $u$ by above  at  $x_0 \in \Omega \cup \Gamma$,
$$
\left\{
\begin{array}{rcl}
  F\left(D^2 \varphi(x_{0}), D \varphi(x_{0}), \varphi(x_{0}), x_{0}\right)  \geq f(x_0) & \text{when} & x_0 \in \Omega \\
  \text{and} \quad \mathcal{B}(x_0, u(x_0), D \varphi(x_0)) \ge g(x_0) & \text{at} & x_0 \in \Gamma.
\end{array}
\right.
$$
\item[b)] for all $ \forall\,\, \varphi \in C^{2 } (\Omega \cup \Gamma)$ touching $u$ by below  at  $x_0 \in \Omega \cup \Gamma$,
$$
\left\{
\begin{array}{rcl}
  F\left(D^2 \varphi(x_{0}), D \varphi(x_{0}), \varphi(x_{0}), x_{0}\right)  \leq f(x_0) & \text{when} & x_0 \in \Omega \\
  \text{and} \quad \mathcal{B}(x_0, u(x_0), D \varphi(x_0)) \le g(x_0) & \text{at} & x_0 \in \Gamma.
\end{array}
\right.
$$

\end{enumerate}
\end{definition}
\begin{definition}[{\bf$L^{p}$-viscosity solution}]\label{VSLp}
Let $F$ be a $(\lambda,\Lambda,\sigma, \xi)$-elliptic operator, $p\ge n$ and $f\in L^{p}(\Omega)$. Assume that $F$ is continuous in $\mathrm{X}$, $q$, $r$ and measurable in $x$. A function $u\in C^0(\overline{\Omega})$ is said an $L^{p}$-viscosity solution for $\ref{E1}$ if the following assertions hold:
\begin{enumerate}
\item [a)] For all $\varphi\in W^{2,p}(\overline{\Omega})$ touching $u$ by above  at  $x_0 \in \overline{\Omega}$
$$
\left\{
\begin{array}{rcl}
	F\left(D^2 \varphi(x_{0}), D \varphi(x_{0}), \varphi(x_{0}), x_{0}\right)  \geq f(x_0) & \text{when} & x_0 \in \Omega \\
	\text{and} \quad \mathcal{B}(x_0, u(x_0), D \varphi(x_0)) \ge g(x_0) & \text{at} & x_0 \in \partial \Omega.
\end{array}
\right.
$$
\item [b)] For all $\varphi\in W^{2,p}(\overline{\Omega})$ touching $u$ by below  at  $x_0 \in \overline{\Omega}$
$$
\left\{
\begin{array}{rcl}
	F\left(D^2 \varphi(x_{0}), D \varphi(x_{0}), \varphi(x_{0}), x_{0}\right)  \leq f(x_0) & \text{when} & x_0 \in \Omega \\
	\text{and} \quad \mathcal{B}(x_0, u(x_0), D \varphi(x_0)) \le g(x_0) & \text{at} & x_0 \in \partial \Omega.
\end{array}
\right.
$$
\end{enumerate}
\end{definition}

For convenience of notation, we define
$$
	\mathcal{L}^{\pm}(u) \defeq \mathscr{P}^{\pm}_{\lambda,\Lambda}(D^2 u) \pm \sigma |Du|.
$$
\begin{definition}
 We define the class $\overline{\mathcal{S}}\left(\lambda,\Lambda,\sigma, f\right)$ and $\underline{S}\left(\lambda,\Lambda,\sigma, f\right)$ to be the set of all continuous functions $u$ that satisfy $\mathcal{L}^{+}(u) \ge f$, respectively $\mathcal{L}^{-}(u) \le f$ in the viscosity sense (see Definition \ref{VSC_0}). We define,
 $$
 	\mathcal{S}\left(\lambda, \Lambda, \sigma,f\right) \defeq  \overline{\mathcal{S}}\left(\lambda, \Lambda, \sigma,f\right) \cap \underline{\mathcal{S}}\left(\lambda, \Lambda,\sigma, f\right)\,\,\text{and}\,\,
 \mathcal{S}^{\star}\left(\lambda, \Lambda,\sigma, f\right) \defeq  \overline{\mathcal{S}}\left(\lambda, \Lambda, \sigma,|f|\right) \cap \underline{\mathcal{S}}\left(\lambda, \Lambda,\sigma, -|f|\right).
  $$
  Moreover, when $\sigma=0$, we denote $\mathcal{S}^{\star}(\lambda,\Lambda,0,f)$ just by $S^{\star}(\lambda,\Lambda,f)$ (resp. by $\underline{S}, \overline{S}, \mathcal{S}$).
 \end{definition}

Now, we present a compactness result, whose proof can be found in \cite[Theorem 1.1]{LiZhang}.

\begin{theorem}[{\bf $C^{0, \alpha^{\prime}}$ regularity}]\label{Holder_Est} Let $u \in C^0(\Omega) \cap C^0(\Gamma)$ be satisfying
$$
\left\{
\begin{array}{rcl}
  u \in \mathcal{S}(\lambda, \Lambda, f) & \text{in} & \Omega \\
  \mathcal{B}(x, u, Du)= g(x)& \text{on} & \Gamma.
\end{array}
\right.
$$
Then, for any $\Omega^{\prime} \subset\subset \Omega \cup \Gamma$, $u \in C^{0, \alpha^{\prime}}(\overline{\Omega^{\prime}})$ and
$$
\|u\|_{C^{0, \alpha^{\prime}}(\overline{\Omega^{\prime}})} \le \mathrm{C}(n, \lambda,\Lambda, \mu_0, \|\gamma\|_{L^{\infty}(\Gamma)}, \Omega^{\prime}, \Omega)\left(\|u\|_{L^{\infty}(\Omega)}+ \|f\|_{L^n(\Omega)}+\|g\|_{L^{\infty}(\Gamma)}\right)
$$
where $\alpha^{\prime} \in (0, 1)$ depends on $n, \lambda,\Lambda$ and $\mu_0$.
\end{theorem}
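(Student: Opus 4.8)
The statement to be proved is the $C^{0,\alpha'}$ boundary regularity estimate (Theorem \ref{Holder_Est}) for functions in the Pucci class $\mathcal{S}(\lambda,\Lambda,f)$ in $\Omega$ satisfying the oblique boundary condition $\mathcal{B}(x,u,Du)=g(x)$ on $\Gamma\subset\partial\Omega$. Since the excerpt explicitly attributes the proof to \cite[Theorem 1.1]{LiZhang}, the plan is to reproduce the standard barrier-and-iteration scheme adapted to the oblique condition. First I would flatten the boundary: near a point $x_0\in\Gamma$ a $C^{1,1}$ (or $C^{2,\alpha}$) diffeomorphism straightens $\partial\Omega$ to a piece of $\{x_n=0\}$, transforming the equation into another equation of the same Pucci type (the extremal operators are stable under $C^{1,1}$ changes of variables, producing new ellipticity constants and a first-order term absorbed into $\sigma$) and transforming $\mathcal{B}$ into a still-oblique condition $\tilde\beta\cdot D\tilde u+\tilde\gamma\tilde u=\tilde g$ with $\tilde\beta\cdot e_n\ge\mu_0/2>0$. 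Interior $C^{0,\alpha}$ estimates are classical (Krylov--Safonov / \cite{CC}), so the whole game is the estimate at flat boundary points.

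The core is an oscillation-decay lemma: there exist $\rho\in(0,1)$ and $\alpha'\in(0,1)$, depending only on $n,\lambda,\Lambda,\mu_0$, such that if $u\in\mathcal{S}(\lambda,\Lambda,f)$ in $\mathrm{B}_1^+$ with $\beta\cdot Du+\gamma u=g$ on $\mathrm{T}_1$, $\|u\|_{L^\infty}\le 1$, and $\|f\|_{L^n}+\|g\|_{L^\infty}\le\varepsilon$ is small, then $\operatorname{osc}_{\mathrm{B}_\rho^+}u\le(1-\theta)\operatorname{osc}_{\mathrm{B}_1^+}u$ (up to the small data terms). I would prove this by constructing explicit barriers: functions of the form $w(x)=A|x-y^*|^{-\tau}$-type or polynomial barriers like $w(x)=C(|x'|^2-ax_n^2)+bx_n$ that are strict super-/sub-solutions of the Pucci operator in an annular region, stay on the correct side of $u$ on the spherical part of the boundary, and satisfy the oblique inequality $\beta\cdot Dw+\gamma w\le g$ (resp. $\ge$) on $\mathrm{T}_1$ — here the sign conditions $\gamma\le 0$ and $\beta\cdot\overrightarrow{\mathbf{n}}\ge\mu_0$ are exactly what makes the barrier compatible with the boundary condition, since the $bx_n$ term controls the normal derivative. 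The comparison principle for $L^p$-viscosity solutions in this oblique setting (available from \cite{CCKS}, \cite{Leiberman}) then sandwiches $u$ and yields the oscillation decay. Iterating this on dyadic half-balls $\mathrm{B}_{\rho^k}^+$ and summing the geometric series gives pointwise $C^{0,\alpha'}$ control at $0$; combining with the interior estimate and a standard covering argument upgrades this to the full norm bound on $\overline{\Omega'}$, with the stated dependence of the constant.

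The main obstacle is the construction of the barrier that is simultaneously a strict Pucci sub/supersolution \emph{and} respects the oblique inequality on the flat part with the right strict sign — the obliqueness $\beta\cdot\overrightarrow{\mathbf n}\ge\mu_0$ must be used quantitatively to absorb the tangential components of $\beta$ and the lower-order $\sigma|Du|$ and $\gamma$ terms, and one must track that all constants remain universal after the boundary flattening (which perturbs $\beta$ and the operator). A secondary technical point is handling the $L^n$ (rather than $L^\infty$) right-hand side: one passes through the Aleksandrov--Bakelman--Pucci estimate so that $\|f\|_{L^n}$ enters the barrier comparison, and one must verify the scaling of the $f$-term is subcritical so the geometric iteration still converges. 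Once these are in place the rest is the routine dyadic summation.
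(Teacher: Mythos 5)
The paper does not actually prove this statement: it is imported verbatim from Li--Zhang \cite[Theorem 1.1]{LiZhang}, so the only thing to compare your sketch against is the cited proof. Your overall architecture (boundary flattening, interior Krylov--Safonov, a boundary estimate at flat points, dyadic iteration and covering) matches what is needed, and your remarks about preserving obliqueness ($\tilde\beta\cdot e_n\ge \mu_0/2$) and the Pucci class under the change of variables are the right bookkeeping.

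However, there is a genuine gap at the core step. You propose to obtain the boundary oscillation decay by sandwiching $u$ between two explicit barriers via the comparison principle. For fully nonlinear uniformly elliptic equations this mechanism cannot work on its own: a profile such as $|x|^{\alpha}$ (or the polynomial barriers you list) is \emph{not} in general a supersolution of $\mathscr{P}^{-}_{\lambda,\Lambda}$ for small $\alpha$ unless $\Lambda(1-\alpha)\ge \lambda(n-1)$, and more fundamentally, since $u$ is not prescribed on $\mathrm{T}_1$, a barrier dominating $u$ on the spherical cap only reproduces the bound $\sup u$ rather than a strict improvement $\operatorname{osc}_{\mathrm{B}_\rho^+}u\le(1-\theta)\operatorname{osc}_{\mathrm{B}_1^+}u$. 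If barriers alone sufficed, Krylov--Safonov would be trivial. The actual engine of Li--Zhang's proof (as in Milakis--Silvestre for the Neumann case) is a weak Harnack / $L^{\varepsilon}$ inequality valid \emph{up to the flat boundary} for nonnegative supersolutions satisfying $\beta\cdot Dv\le C_0$ on $\mathrm{T}_1$; this is built from a boundary ABP estimate, a barrier used only as the base step of a measure (Calder\'on--Zygmund cube) iteration, and is then applied to $M-u$ and $u-m$ to get the oscillation decay. Your sketch invokes ABP only to absorb the $L^n$ right-hand side, not to run this measure-theoretic argument, so the decisive lemma is missing. To repair the proposal you would need to state and prove (or correctly quote) the boundary weak Harnack inequality under the oblique condition and derive the oscillation decay from it; the rest of your outline then goes through.
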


Next, we present the following Stability result (see \cite[Theorem 3.8]{CCKS} for a proof).
\begin{lemma}[{\bf Stability Lemma}]\label{Est}
For $k \in \mathbb{N}$ let $\Omega_k \subset \Omega_{k+1}$ be an increasing sequence of domains and $\displaystyle \Omega \defeq \bigcup_{k=1}^{\infty} \Omega_k$. Let $p \ge n$ and $F, F_k$ be $(\lambda, \Lambda, \sigma, \xi)-$elliptic operators. Assume $f \in L^{p}(\Omega)$, $f_k \in L^p(\Omega_k)$ and that $u_k \in C^0(\Omega_k)$ are $L^{p}-$viscosity sub-solutions (resp. super-solutions) of
$$
	F_k(D^2 u_k,Du_k,u_k,x)=f_k(x) \quad \textrm{in} \quad \Omega_k.
$$
Suppose that $u_k \to u_{\infty}$ locally uniformly in $\Omega$ and that for $\mathrm{B}_r(x_0) \subset \Omega$ and $\varphi \in W^{2,p}(\mathrm{B}_r(x_0))$ we have
\begin{equation} \label{Est1}
	\|(\hat{g}-\hat{g}_k)^+\|_{L^p(\mathrm{B}_r(x_0))} \to 0 \quad \left(\textrm{resp.} \,\,\, \|(\hat{g}-\hat{g}_k)^-\|_{L^p(\mathrm{B}_r(x_0))} \to 0 \right),
\end{equation}
where $\hat{g}(x) \defeq F(D^2 \varphi, D \varphi, u,x)-f(x)$ and $\hat{g}_k(x) =  F_k(D^2 \varphi, D \varphi, u_{k},x)-f_k(x)$.  Then, $u$ is an $L^{p}-$viscosity sub-solution (resp. super-solution) of
$$
	F(D^2 u,Du,u,x)=f(x) \quad \textrm{in} \quad \Omega.
$$
Moreover, if $F, f$ are continuous, then $u$ is a $C^0-$viscosity sub-solution (resp. super-solution) provided that \eqref{Est1} holds for all $\varphi \in C^2(\mathrm{B}_r(x_0))$ test function.
\end{lemma}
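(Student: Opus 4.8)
The statement is a (faithful) version of the classical stability theorem for $L^{p}$-viscosity sub/supersolutions, cf.\ \cite[Theorem 3.8]{CCKS}, and the plan is to prove the sub-solution case, the super-solution one being obtained symmetrically (e.g.\ by applying the argument to $-u_{k}$ together with the mirrored form of \eqref{Est1}, so that the two one-sided statements are interchanged). I would fix a ball $\overline{\mathrm{B}_{r}(x_{0})}\subset\Omega$ and a test function $\varphi\in W^{2,p}(\overline{\mathrm{B}_{r}(x_{0})})$ such that $u-\varphi$ attains a local maximum at $x_{0}$, and argue by contradiction: assume that, after shrinking $r$, there is $\varepsilon>0$ with $\hat{g}:=F(D^{2}\varphi,D\varphi,u,\cdot)-f\le-\varepsilon$ a.e.\ in $\mathrm{B}_{r}(x_{0})$, i.e.\ a strict violation of the $L^{p}$-viscosity subsolution inequality for $u$. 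Replacing $\varphi(x)$ by $\varphi(x)+\mu|x-x_{0}|^{4}$ and using the structural condition (A1) to bound the induced change in $\hat{g}$ by $\mathrm{C}(n,\lambda,\Lambda,\sigma)\mu r^{2}$ on $\mathrm{B}_{r}(x_{0})$, I may also assume that $x_{0}$ is the \emph{unique} maximum point of $u-\varphi$ over $\overline{\mathrm{B}_{r}(x_{0})}$, at the cost of replacing $\varepsilon$ by $\varepsilon/2$. Since $\overline{\mathrm{B}_{r}(x_{0})}$ is compact and contained in $\bigcup_{k}\Omega_{k}$, we have $\overline{\mathrm{B}_{r}(x_{0})}\subset\Omega_{k}$ for all $k$ large.

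The heart of the argument is the construction of a corrector that upgrades the merely $L^{p}$ one-sided smallness in \eqref{Est1} into an honest uniform perturbation of the test function. Let $h_{k}\ge0$ denote the one-sided excess between $\hat{g}$ and $\hat{g}_{k}$ appearing in \eqref{Est1}, so that $\|h_{k}\|_{L^{p}(\mathrm{B}_{r}(x_{0}))}\to0$, hence $\|h_{k}\|_{L^{n}(\mathrm{B}_{r}(x_{0}))}\to0$ since $p\ge n$, and let $w_{k}\in C^{0}(\overline{\mathrm{B}_{r}(x_{0})})\cap W^{2,p}_{\mathrm{loc}}(\mathrm{B}_{r}(x_{0}))$ solve the extremal Dirichlet problem
\[
\mathcal{L}^{-}(w_{k})=\mathscr{P}^{-}_{\lambda,\Lambda}(D^{2}w_{k})-\sigma|Dw_{k}|=h_{k}\ \text{ in }\mathrm{B}_{r}(x_{0}),\qquad w_{k}=0\ \text{ on }\partial\mathrm{B}_{r}(x_{0}),
\]
whose solvability (and interior $W^{2,p}$ regularity, the borderline $p=n$ being handled by the usual Escauriaza-type refinement) is classical, cf.\ \cite{Caff1, CCKS}. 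By the ABP maximum principle applied on both sides, $\|w_{k}\|_{L^{\infty}(\mathrm{B}_{r}(x_{0}))}\le\mathrm{C}(n,\lambda,\Lambda,\sigma,r)\|h_{k}\|_{L^{n}(\mathrm{B}_{r}(x_{0}))}\to0$. Set $\psi_{k}:=\varphi-w_{k}$. Since $u_{k}\to u$ and $w_{k}\to0$ uniformly on $\overline{\mathrm{B}_{r}(x_{0})}$, we have $u_{k}-\psi_{k}\to u-\varphi$ uniformly there, so the maximum of $u_{k}-\psi_{k}$ over $\overline{\mathrm{B}_{r}(x_{0})}$ is attained at points $x_{k}\to x_{0}$, and hence $x_{k}\in\mathrm{B}_{r}(x_{0})$ with $u_{k}-\psi_{k}$ having an interior local maximum at $x_{k}$ for all $k$ large. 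Combining the structural inequality (A1) for $F_{k}$ with the equation satisfied by $w_{k}$, a direct computation gives, a.e.\ in $\mathrm{B}_{r}(x_{0})$,
\[
F_{k}(D^{2}\psi_{k},D\psi_{k},u_{k},x)-f_{k}(x)\ \le\ \hat{g}_{k}(x)-\mathcal{L}^{-}(w_{k})(x)\ =\ \hat{g}_{k}(x)-h_{k}(x)\ \le\ -\tfrac{\varepsilon}{2},
\]
where the last estimate uses the a.e.\ pointwise control of the gap $\hat{g}_{k}-\hat{g}$ built into \eqref{Est1} together with $\hat{g}\le-\varepsilon/2$. Thus $\psi_{k}$ is a $W^{2,p}$ test function touching $u_{k}$ from above at the interior point $x_{k}$, and the displayed inequality holds a.e.\ on a full neighborhood of $x_{k}$, which contradicts $u_{k}$ being an $L^{p}$-viscosity subsolution of $F_{k}(D^{2}u_{k},Du_{k},u_{k},x)=f_{k}$ in $\Omega_{k}$. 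Therefore $u$ is an $L^{p}$-viscosity subsolution of $F(D^{2}u,Du,u,x)=f$ in $\Omega$. For the last assertion, when $F$ and $f$ are continuous the same scheme runs verbatim with $\varphi\in C^{2}(\mathrm{B}_{r}(x_{0}))$, and one then concludes that $u$ is a $C^{0}$-viscosity sub/supersolution by invoking the coincidence of $C^{0}$- and $L^{p}$-viscosity sub/supersolutions for continuous ingredients.

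The main obstacle is precisely the corrector step: \eqref{Est1} furnishes only $L^{p}$, not pointwise, smallness of the one-sided gap between $\hat{g}$ and $\hat{g}_{k}$, so one cannot directly pass to the limit in $\hat{g}_{k}$; the ABP estimate for the extremal operator $\mathcal{L}^{-}$ is exactly what turns this $L^{p}$-smallness into a uniformly small corrector $w_{k}$, and one must verify that the perturbed test function $\psi_{k}$ produces a violation of the subsolution inequality that holds a.e.\ on a genuine neighborhood of $x_{k}$ (not merely in an essential-liminf sense), which is what is incompatible with the $L^{p}$-viscosity subsolution property of $u_{k}$. A secondary technical point is the admissibility of $\psi_{k}=\varphi-w_{k}$ as a test function when $w_{k}$ is only known to lie in $W^{2,p}_{\mathrm{loc}}$ (or, at the borderline, $C^{0}\cap W^{2,q}_{\mathrm{loc}}$ for $q$ slightly below $n$); this is dealt with by the standard device of adjoining solutions of extremal equations to $W^{2,p}$ test functions within the $L^{p}$-viscosity framework.
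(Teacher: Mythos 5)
The paper offers no proof of this lemma (it only cites \cite{CCKS}, Theorem 3.8), so your attempt to reconstruct the argument is worthwhile, and the architecture you chose --- contradiction, a quartic perturbation to make the maximum strict, an extremal-equation corrector controlled by the A.B.P. estimate, and a violation of the sub-solution property of $u_k$ at nearby maximum points $x_k$ --- is exactly the standard one. The decisive step, however, fails. You take $h_k=(\hat g-\hat g_k)^{+}$ and claim $\hat g_k-h_k\le-\varepsilon/2$ from $\hat g\le-\varepsilon/2$. Pointwise, $\hat g_k-(\hat g-\hat g_k)^{+}\le\hat g$ is equivalent to $\hat g_k-\hat g\le(\hat g-\hat g_k)^{+}$, which is false wherever $\hat g_k>\hat g$: there $h_k=0$ and $\hat g_k-h_k=\hat g_k$, which is completely uncontrolled. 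Indeed $\hat g_k$ may be nonnegative on a set of full measure while \eqref{Est1} holds trivially: take $F_k=F=\mathrm{tr}$, $u_k=u\equiv 0$, $f_k\equiv 0$, $f\equiv 1$; then $(\hat g-\hat g_k)^{+}=(f_k-f)^{+}=0$, each $u_k$ is a (classical, hence $L^p$-viscosity) sub-solution of $\Delta v=0$, yet $u\equiv 0$ is not a sub-solution of $\Delta v=1$. So the inequality you need does not merely lack justification --- it cannot be derived from the stated one-sided hypothesis. Your appeal to ``a.e.\ pointwise control of the gap built into \eqref{Est1}'' is a double misreading: \eqref{Est1} is only an $L^p$ bound, and it bounds the wrong one-sided part of the gap for the sub-solution case.

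The argument closes if the corrector absorbs the opposite excess: setting $h_k\defeq(\hat g_k-\hat g)^{+}=(\hat g-\hat g_k)^{-}$ and solving $\mathcal{L}^{-}(w_k)=h_k$ gives the clean pointwise identity $\hat g_k-h_k=\min\{\hat g_k,\hat g\}\le\hat g\le-\varepsilon/2$ everywhere, and the A.B.P. step then requires $\|(\hat g-\hat g_k)^{-}\|_{L^p}\to0$. In other words, for sub-solutions the hypothesis your proof actually consumes is the one attached to super-solutions in \eqref{Est1} as printed; you should re-derive the correct pairing of the one-sided conditions directly from the definition of $L^p$-viscosity sub/super-solution rather than take the displayed signs at face value. (In the applications made in this paper the issue is harmless, since there $\hat g_k-\hat g\to0$ in the full $L^p$ norm, so both one-sided conditions hold simultaneously.) The remaining ingredients of your write-up --- admissibility of $\varphi-w_k$ as a test function, uniform convergence of the maximum points, and the reduction of the $C^0$ case to continuous data --- are handled adequately.
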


The next result is an A.B.P. Maximum Principle (see \cite[Theorem 2.1]{LiZhang} for a proof).

\begin{lemma}[{\bf A.B.P. Maximum Principle}]\label{ABP-fullversion}
	Let $\Omega\subset \mathrm{B}_{1}$ and $u \in C^{0}(\overline{\Omega})$ satisfying
	\begin{equation*}
		\left\{
		\begin{array}{rclcl}
			u\in \mathcal{S}(\lambda,\Lambda,f) &\mbox{in}&   \Omega \\
			 \mathcal{B}(x,u,Du)=g(x)  &\mbox{on}&  \Gamma.
		\end{array}
		\right.
	\end{equation*}
	Suppose that exists $\varsigma\in \partial \mathrm{B}_{1}$ such that $\beta\cdot\varsigma\geq \mu_0$ and $\gamma\le 0$ in $\Gamma$. Then,
	\begin{eqnarray*}
		\|u\|_{L^{\infty}(\Omega)}\leq \|u\|_{L^{\infty}(\partial \Omega\setminus \Gamma)}+\mathrm{C}(n, \lambda, \Lambda, \mu_0)(\| g\|_{L^{\infty}(\Gamma)}+\| f\|_{L^{n}(\Omega)})
	\end{eqnarray*}
\end{lemma}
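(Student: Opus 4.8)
The plan is to run the Aleksandrov--Bakelman--Pucci (A.B.P.) argument through a concave envelope, splitting its contact set into an interior part, handled exactly as in the classical theory using the Pucci structure of the class $\mathcal{S}(\lambda,\Lambda,f)$, and a part sitting on $\Gamma$, handled through the viscosity form of the oblique condition together with the uniformly transversal direction $\varsigma$. First I would reduce to the one-sided bound $\sup_{\Omega}u\le\|u\|_{L^{\infty}(\partial\Omega\setminus\Gamma)}+\mathrm{C}(n,\lambda,\Lambda,\mu_{0})(\|g\|_{L^{\infty}(\Gamma)}+\|f\|_{L^{n}(\Omega)})$; applying the same argument to $-u$, which solves the analogous problem (class $\mathcal{S}(\lambda,\Lambda,-f)$, boundary datum $-g$, the \emph{same} sign condition $\gamma\le 0$, and $\|{-f}\|_{L^n}=\|f\|_{L^n}$, $\|{-g}\|_{L^\infty}=\|g\|_{L^\infty}$), controls $\sup_{\Omega}(-u)$, and the two together give the stated estimate. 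After translating so $0\in\Omega$ (which leaves $\varsigma$ untouched) set $d:=\mathrm{diam}(\Omega)\le 2$, $S:=\sup_{\partial\Omega\setminus\Gamma}u^{+}$, $M:=\sup_{\Omega}u$, and assume $M>\max\{S,0\}$, else there is nothing to prove. The guiding point is that $\gamma\le 0$ will be used only to \emph{discard} the zeroth order term of $\mathcal{B}$ at points where $u\ge 0$, so that $\|\gamma\|_{L^{\infty}}$ never enters the constant.

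\noindent\textbf{The envelope and the interior contact set.} Extend $u$ to $\overline{\mathrm{B}_{2d}}$ by the constant $S$ outside $\Omega$, take the upper semicontinuous regularization, and let $v$ be the concave envelope of this extension over $\mathrm{B}_{2d}$. Then $\max v=M$, $v\equiv S$ near $\partial\mathrm{B}_{2d}$, and the active contact set $E:=\{x\in\overline{\Omega}:v(x)=u(x)>S\}$ satisfies $E\subset\Omega\cup\Gamma$. The classical Aleksandrov lemma gives $\nabla v(E)\supset\mathrm{B}_{\varrho}$ with $\varrho$ comparable to $(M-S)/d$, hence $|\nabla v(E)|\ge c_{n}((M-S)/d)^{n}$, and $\nabla v(E)\subset\nabla v(E\cap\Omega)\cup\nabla v(E\cap\Gamma)$. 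At a.e.\ $x\in E\cap\Omega$ the concave $v$ is twice differentiable with $D^{2}v(x)\le 0$, and testing the sub-solution property $u\in\overline{\mathcal{S}}(\lambda,\Lambda,f)$ from above at $x$ by the second order jet of $v$ yields $\mathscr{P}^{+}_{\lambda,\Lambda}(D^{2}v(x))\ge f(x)$; since $D^{2}v(x)\le 0$ this is $\lambda\,\mathrm{tr}(D^{2}v(x))\ge f(x)$, so $0\le-\Delta v(x)\le|f(x)|/\lambda$. The arithmetic--geometric inequality then gives $|\det D^{2}v(x)|\le(|f(x)|/(n\lambda))^{n}$, and the area formula for the gradient map of a concave function yields $|\nabla v(E\cap\Omega)|\le(n\lambda)^{-n}\|f\|_{L^{n}(\Omega)}^{n}$.

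\noindent\textbf{The boundary contact set --- the crux.} For $x_{0}\in E\cap\Gamma$ and $\xi\in\partial v(x_{0})$, the affine map $x\mapsto u(x_{0})+\xi\cdot(x-x_{0})$ lies above $v\ge u$ on $\overline{\Omega}$ with equality at $x_{0}$, i.e.\ it touches $u$ from above at $x_{0}\in\Gamma$, so the viscosity oblique inequality gives $\beta(x_{0})\cdot\xi+\gamma(x_{0})u(x_{0})\ge g(x_{0})$; discarding the middle term ($\gamma(x_{0})\le 0$, $u(x_{0})>S\ge 0$) yields $\beta(x_{0})\cdot\xi\ge-\|g\|_{L^{\infty}(\Gamma)}$. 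This half-space constraint is by itself too weak --- $E\cap\Gamma$ is Lebesgue-null, so the area formula says nothing there --- and one must exploit the uniform transversality $\beta(x_{0})\cdot\varsigma\ge\mu_{0}>0$. Following the oblique A.B.P.\ argument of \cite[Theorem 2.1]{LiZhang} (in Lieberman's spirit), after localizing and flattening a neighbourhood of $\Gamma$ so that $\varsigma$ is a coordinate direction, one uses $\beta(x_{0})\cdot\varsigma\ge\mu_{0}$ to bound the $\varsigma$-component of $\xi$ by the transversal ones, the latter being a priori $O((M-S)/d)$ because the supporting plane stays above $S$ on $\mathrm{B}_{2d}\setminus\mathrm{B}_{d}$; this produces a bound of the form
\[
|\nabla v(E\cap\Gamma)|\ \le\ \mathrm{C}(n,\mu_{0})\,\|g\|_{L^{\infty}(\Gamma)}\,\big((M-S)/d\big)^{n-1}.
\]
(It is technically convenient to carry this out first for $u_{\varepsilon}(x):=u(x)+\varepsilon(\varsigma\cdot x-R)$ with $R:=\sup_{\overline{\Omega}}\varsigma\cdot x$, which --- since $\varsigma\cdot x-R\le 0$ and $\gamma\le 0$ --- satisfies the oblique condition with the \emph{strict} margin $\beta\cdot Du_{\varepsilon}+\gamma u_{\varepsilon}\ge g+\varepsilon\mu_{0}$, making the boundary contact analysis robust, and then to let $\varepsilon\to 0^{+}$ using $u_{\varepsilon}\to u$ uniformly.)

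\noindent\textbf{Conclusion.} Putting $a:=(M-S)/d$ and combining the three bounds,
\[
c_{n}\,a^{n}\ \le\ (n\lambda)^{-n}\|f\|_{L^{n}(\Omega)}^{n}+\mathrm{C}(n,\mu_{0})\,\|g\|_{L^{\infty}(\Gamma)}\,a^{n-1}.
\]
If $c_{n}a\le 2\mathrm{C}(n,\mu_{0})\|g\|_{L^{\infty}(\Gamma)}$ then $M-S\le\mathrm{C}(n,\mu_{0})\|g\|_{L^{\infty}(\Gamma)}$ (using $d\le 2$); otherwise $c_{n}a-\mathrm{C}(n,\mu_{0})\|g\|_{L^{\infty}(\Gamma)}\ge c_{n}a/2$, forcing $a\le\mathrm{C}(n,\lambda)\|f\|_{L^{n}(\Omega)}$. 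In both cases $\sup_{\Omega}u\le\|u\|_{L^{\infty}(\partial\Omega\setminus\Gamma)}+\mathrm{C}(n,\lambda,\Lambda,\mu_{0})(\|g\|_{L^{\infty}(\Gamma)}+\|f\|_{L^{n}(\Omega)})$, and the same applied to $-u$ completes the proof. I expect the decisive difficulty to be the boundary estimate on $|\nabla v(E\cap\Gamma)|$: on that null set the area formula is silent, so one has to extract a quantitative, strictly sub-$(M-S)^{n}$ bound purely from the oblique condition, and there the role of the uniformly transversal $\varsigma$ is indispensable.
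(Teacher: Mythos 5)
The paper gives no proof of this lemma---it is imported verbatim from \cite[Theorem 2.1]{LiZhang}---so your proposal can only be measured against that reference's strategy, which it does reproduce in outline: reduction to a one-sided bound, concave envelope, interior contact set handled by the Pucci inequality plus the area formula, boundary contact set handled by the viscosity form of the oblique condition, and a closing dichotomy. The reduction to $-u$, the observation that $E\subset\Omega\cup\Gamma$, the use of $\gamma\le 0$ together with $u(x_0)>S\ge 0$ to discard the zeroth-order term, and the interior estimate $|\nabla v(E\cap\Omega)|\le (n\lambda)^{-n}\|f\|_{L^n(\Omega)}^n$ are all correct (modulo the routine justification, via Jensen-type touching, that a.e.\ twice differentiability of $v$ suffices to test the subsolution property).

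The gap sits exactly where you predicted it, and it is genuine: the bound $|\nabla v(E\cap\Gamma)|\le \mathrm{C}(n,\mu_0)\,\|g\|_{L^{\infty}(\Gamma)}\,a^{n-1}$ is asserted rather than derived, and it does not follow from the constraints you actually establish. All the oblique condition yields at $x_0\in E\cap\Gamma$ is that every $\xi\in\partial v(x_0)$ lies in the half-space $H_{x_0}=\{\xi:\ \beta(x_0)\cdot\xi\ge -\|g\|_{L^{\infty}(\Gamma)}\}$; there is no upper bound on $\beta(x_0)\cdot\xi$, so $\bigcup_{x_0}H_{x_0}\cap\mathrm{B}_a$ has measure of order $a^n$, not $\|g\|_{L^{\infty}(\Gamma)}a^{n-1}$ (with $g\equiv 0$ your claim would force $|\nabla v(E\cap\Gamma)|=0$, which nothing in the argument delivers, since $\partial v$ may map the null set $E\cap\Gamma$ onto a set of positive measure). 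The correct quantitative use of the transversality is complementary: since $\beta(x_0)\cdot\varsigma\ge\mu_0$ and $|\beta|\le 1$ for every $x_0$, each $H_{x_0}$ is disjoint from the truncated cone
\begin{equation*}
\mathcal{C}\defeq\Big\{\xi\in\R^n:\ -\mu_0\,(\xi\cdot\varsigma)>\big|\xi-(\xi\cdot\varsigma)\varsigma\big|+\|g\|_{L^{\infty}(\Gamma)}\Big\},
\end{equation*}
because $\beta\cdot\xi\le(\xi\cdot\varsigma)\mu_0+|\xi-(\xi\cdot\varsigma)\varsigma|<-\|g\|_{L^{\infty}(\Gamma)}$ there. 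Hence $\mathrm{B}_{\varrho}\cap\mathcal{C}\subset\nabla v(E)\setminus\nabla v(E\cap\Gamma)\subset\nabla v(E\cap\Omega)$, and $|\mathrm{B}_{\varrho}\cap\mathcal{C}|\ge c(n,\mu_0)\varrho^{n}$ as soon as $\varrho\ge \mathrm{C}(\mu_0)\|g\|_{L^{\infty}(\Gamma)}$. Comparing with your interior bound gives: either $a\le \mathrm{C}(\mu_0)\|g\|_{L^{\infty}(\Gamma)}$, or $c(n,\mu_0)a^{n}\le (n\lambda)^{-n}\|f\|_{L^n(\Omega)}^{n}$ --- the same dichotomy you wrote, with the roles of the two contributions rearranged --- and the stated estimate follows. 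With the asserted measure bound replaced by this cone-avoidance argument, your proof closes; as written, the decisive step is unsupported.
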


In the sequel, we comment on the existence and uniqueness of viscosity solutions with oblique boundary conditions. For that purpose, we will assume the following condition on $F$:

\begin{enumerate}
	\item [$(\bf SC)$]  There exists a modulus of continuity $\tilde{\omega}$, i.e., $\tilde{\omega}$ is nondecreasing with $ \displaystyle \lim_{t \to 0} \tilde{\omega}(t) =0$ and
	$$
	\psi_{F}(x,y) \le \tilde{\omega}(|x-y|).
	$$
	
\end{enumerate}

 Now, we will ensure the existence and uniqueness of viscosity solutions to the following problem:
$$
\left\{
\begin{array}{rclcl}
	F(D^2u, x) &=& f(x) & \mbox{in} & \Omega,\\
	 \mathcal{B}(x,u,Du)&=& g(x) & \mbox{on} & \Gamma\\
	u(x)&=&\varphi(x) & \mbox{on} & \partial \Omega\setminus \Gamma,
\end{array}
\right.
$$
where $\Gamma$ is relatively open of $\partial\Omega$. The next proof follows the ideas from \cite[Theorem 3.1]{LiZhang} with minor modifications. For this reason, we will omit it here.

\begin{theorem}\label{comparation}
	Suppose that $\Gamma\in C^{2}$ and $\beta\in C^{2}(\overline{\Gamma})$. Assume that $F$ satisfies (A1) and $(SC)$ . Let $u$ and $v$ be satisfying
	$$
	\left\{
	\begin{array}{rclcl}
		F(D^2u, x) &\geq& f_{1}(x) & \mbox{in} & \Omega,\\
		 \mathcal{B}(x,u,Du)&\geq& g_{1}(x) & \mbox{on} & \Gamma\\
	\end{array}
	\right.
	$$
	and
	$$
	\left\{
	\begin{array}{rclcl}
		F(D^2v, x) &\leq& f_{2}(x) & \mbox{in} & \Omega,\\
		 \mathcal{B}(x,v,Dv)&\leq& g_{2}(x) & \mbox{on} & \Gamma.\\
	\end{array}
	\right.
	$$
	Then,
	$$
	\left\{
	\begin{array}{rclcl}
		u-v\in \underline{\mathcal{S}}\left(\frac{\lambda}{n},\Lambda,f_{1}-f_{2}\right) & \mbox{in} & \Omega,\\
	 \mathcal{B}(x,u-v,D(u-v))\geq (g_{1}-g_{2})(x) & \mbox{on} & \Gamma.\\
	\end{array}
	\right.
	$$
\end{theorem}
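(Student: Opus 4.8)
The plan is to reduce everything to the known ingredients: the structural condition (A1), the viscosity definitions, and the maximum-principle machinery. The statement has two parts — an interior differential inequality for $w \defeq u-v$ in the Pucci class $\underline{\mathcal{S}}(\lambda/n, \Lambda, f_1-f_2)$, and a boundary inequality for $\mathcal{B}(x, w, Dw)$ on $\Gamma$ — and the boundary part is essentially immediate from linearity of $\mathcal{B}$ in its last two arguments.

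For the interior part, I would argue as in \cite[Theorem 3.1]{LiZhang} (or the classical \cite[Theorem 5.3]{CC} adapted to the $(\lambda,\Lambda,\sigma,\xi)$-elliptic setting): one must show that $w = u - v$ is a viscosity supersolution of $\mathscr{P}^-_{\lambda/n,\Lambda}(D^2 w) \le f_1 - f_2$ in $\Omega$, i.e. $w \in \underline{\mathcal{S}}(\lambda/n,\Lambda,f_1-f_2)$. Since $u$ is a subsolution of $F(D^2 u, x) \ge f_1$ and $v$ a supersolution of $F(D^2 v, x) \le f_2$, and $F$ here has no $Du, u$ dependence (the comparison theorem is stated for $F(D^2 u, x)$), one invokes the theorem on sums / Jensen–Ishii lemma to produce, at a point where a smooth test function $\varphi$ touches $w$ from below, matrices $X, Y$ with $X - Y \le 0$ (more precisely controlled in the sense of the matrix inequality) such that $F(X,x) \ge f_1(x)$ and $F(Y,x) \le f_2(x)$; subtracting and using the lower ellipticity bound in (A1), namely $\mathscr{P}^-_{\lambda,\Lambda}(X-Y) \le F(X,x) - F(Y,x)$ after a standard diagonalization argument that converts $\mathscr{P}^-_{\lambda,\Lambda}$ with the factor-of-$n$ loss into $\mathscr{P}^-_{\lambda/n,\Lambda}$, one obtains $\mathscr{P}^-_{\lambda/n,\Lambda}(D^2\varphi) \le f_1(x) - f_2(x)$. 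The factor $\lambda/n$ is the usual price paid when passing from the half-eigenvalue-sum Pucci operator evaluated on a difference of matrices that are only jointly controlled to a genuine Pucci bound on a single Hessian.

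For the boundary part, observe that $\mathcal{B}(x, r, q) = \beta(x)\cdot q + \gamma(x) r$ is affine in $(r,q)$, hence $\mathcal{B}(x, u-v, D(u-v)) = \mathcal{B}(x,u,Du) - \mathcal{B}(x,v,Dv)$ in the pointwise sense, and the same additivity holds for the viscosity inequalities: if $\varphi$ touches $w$ from below at $x_0 \in \Gamma$ then writing $\varphi = \varphi_1 - \varphi_2$ appropriately (or directly testing) one gets $\mathcal{B}(x_0, w(x_0), D\varphi(x_0)) \ge g_1(x_0) - g_2(x_0)$ from the hypotheses $\mathcal{B}(x_0,u,Du) \ge g_1$, $\mathcal{B}(x_0,v,Dv) \le g_2$. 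This uses that $\gamma \le 0$ only insofar as it is needed for the viscosity-solution framework to make sense; the subtraction itself is purely algebraic. One subtlety worth flagging is that $u$ and $v$ are a priori only semicontinuous viscosity sub/supersolutions, so the rigorous argument replaces $w = u - v$ by the sup-convolution/inf-convolution regularizations (or works with the semijets $J^{2,-}w \supseteq J^{2,+}u - J^{2,+}v$ suitably interpreted), and this is exactly where the hypotheses $\Gamma \in C^2$, $\beta \in C^2(\overline{\Gamma})$ enter, guaranteeing that the standard doubling-of-variables construction near the oblique boundary goes through. Since this is verbatim the content of \cite[Theorem 3.1]{LiZhang} modulo the trivial presence of the inhomogeneities $f_1-f_2$, $g_1-g_2$ on the right-hand sides, it is legitimate to omit the details as the authors do.

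The main obstacle — to the extent there is one — is the careful bookkeeping in the boundary doubling-of-variables argument: one must ensure that the penalization terms used to localize near $\Gamma$ are compatible with the obliqueness condition $\beta \cdot \overrightarrow{\mathbf{n}} \ge \mu_0$ so that no boundary term of the wrong sign appears, and that the matrix inequality produced by the theorem on sums survives the presence of the boundary penalization. But since $F$ is $x$-dependent only through a uniformly continuous modulus (condition $(SC)$), the $x$-oscillation term is absorbed in the limit, and the argument is a routine — if technical — adaptation of the cited reference; accordingly I would simply state the result and refer to \cite[Theorem 3.1]{LiZhang}, as the authors have chosen to do.
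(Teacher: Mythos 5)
Your proposal matches the paper's treatment: the authors give no proof at all, stating only that the argument "follows the ideas from \cite[Theorem 3.1]{LiZhang} with minor modifications," which is exactly the reduction you carry out (theorem on sums for the interior Pucci-class inclusion with the $\lambda/n$ loss, linearity of $\mathcal{B}$ for the boundary inequality, and deferral of the doubling-of-variables details to Li--Zhang). Your sketch is correct and in fact supplies more of the reasoning than the paper does.
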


\begin{theorem}[{\bf Uniqueness}]\label{Unicidade}
	Let $\Gamma\in C^{2}$, $\beta\in C^{2}(\overline{\Gamma})$, $\gamma\le 0$ and $\varphi\in C^{0}(\partial \Omega\setminus \Gamma)$. Suppose that there exists $\varsigma\in\partial \mathrm{B}_{1}$ such that $\beta\cdot \varsigma\ge \mu_0$ on $\Gamma$.  Assume that $F$ satisfies $(A1)$. Then, there exists at most one viscosity solution of
	$$
	\left\{
	\begin{array}{rclcl}
F(D^2u, x) &=& f(x) & \mbox{in} & \Omega,\\
\mathcal{B}(x,u,Du)&=& g(x) & \mbox{on} & \Gamma\\
u(x)&=&\varphi(x) & \mbox{on} & \partial \Omega\setminus \Gamma.
	\end{array}
	\right.
	$$
\end{theorem}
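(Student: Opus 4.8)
The plan is to derive uniqueness from the Comparison Principle that is essentially packaged in Theorem~\ref{comparation}, together with the A.B.P.\ Maximum Principle (Lemma~\ref{ABP-fullversion}). Suppose $u_1$ and $u_2$ are both viscosity solutions of the stated boundary value problem. Then $u_1$ is a subsolution and $u_2$ a supersolution with the \emph{same} data $f_1=f_2=f$ on $\Omega$, $g_1=g_2=g$ on $\Gamma$, and $u_1=u_2=\varphi$ on $\partial\Omega\setminus\Gamma$. Applying Theorem~\ref{comparation} with these choices yields
$$
\left\{
\begin{array}{rclcl}
 u_1-u_2 \in \underline{\mathcal{S}}\!\left(\tfrac{\lambda}{n},\Lambda,0\right) & \mbox{in} & \Omega,\\
 \mathcal{B}(x,u_1-u_2,D(u_1-u_2)) \ge 0 & \mbox{on} & \Gamma.
\end{array}
\right.
$$
The point is that the difference now satisfies a homogeneous Pucci-type inequality in $\Omega$ and a one-sided oblique inequality on $\Gamma$, with vanishing boundary values on the complementary (Dirichlet) portion $\partial\Omega\setminus\Gamma$.

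Next I would invoke the A.B.P.\ Maximum Principle of Lemma~\ref{ABP-fullversion} applied to $w\defeq u_1-u_2$ (or to one-sided truncations of it, since $\underline{\mathcal{S}}$ only gives the subsolution side). The hypotheses required there are exactly the ones assumed here: $\Gamma\in C^2$, $\beta\in C^2(\overline{\Gamma})$, the existence of $\varsigma\in\partial\mathrm{B}_1$ with $\beta\cdot\varsigma\ge\mu_0$, and $\gamma\le 0$ on $\Gamma$. Since the right-hand side $f$ and the oblique datum $g$ both vanish for the difference, Lemma~\ref{ABP-fullversion} gives
$$
\|w\|_{L^\infty(\Omega)} \le \|w\|_{L^\infty(\partial\Omega\setminus\Gamma)} + \mathrm{C}(n,\lambda,\Lambda,\mu_0)\bigl(\|0\|_{L^\infty(\Gamma)} + \|0\|_{L^n(\Omega)}\bigr) = 0,
$$
because $w\equiv 0$ on $\partial\Omega\setminus\Gamma$. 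Hence $u_1\le u_2$ in $\Omega$. Reversing the roles of $u_1$ and $u_2$ (so that $u_2-u_1\in\underline{\mathcal{S}}$) gives $u_2\le u_1$, and therefore $u_1\equiv u_2$.

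The main obstacle is the careful bookkeeping needed to legitimately pass from ``both solutions'' to the one-sided membership $w\in\underline{\mathcal{S}}(\lambda/n,\Lambda,0)$: Theorem~\ref{comparation} is stated for a subsolution minus a supersolution, so one must be precise that a viscosity solution is simultaneously a sub- and a supersolution in the $L^p$ (or $C^0$) sense and that the oblique operator $\mathcal{B}$ is linear in $u$ and $Du$, so the inequalities for $u_1$ and $-u_2$ add correctly on $\Gamma$ — here the sign condition $\gamma\le 0$ is what makes the $\gamma\,w$ term harmless in the A.B.P.\ estimate. A secondary technical point is that $\underline{\mathcal{S}}$ controls only $w^+$ via the A.B.P.\ bound, so one genuinely needs to run the argument twice (once for $w$, once for $-w$), rather than bounding $|w|$ in a single step; since $F$ need not be odd, there is no shortcut, and this two-sided application is exactly why the comparison statement is phrased with $\lambda/n$ in place of $\lambda$. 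No convexity or recession hypotheses enter — only (A1) and the structural conditions on $\Gamma$, $\beta$, $\gamma$ — which is consistent with the generality claimed in the statement.
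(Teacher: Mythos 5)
Your proof is correct and follows exactly the route the paper takes: its proof of Theorem~\ref{Unicidade} is the one-line remark that the result is a direct consequence of Theorem~\ref{comparation} combined with the A.B.P.\ estimate of Lemma~\ref{ABP-fullversion}, which is precisely the argument you spell out (including the correct observation that the one-sided membership in $\underline{\mathcal{S}}$ forces you to run the comparison twice). The only quibble is your parenthetical explanation of the constant $\lambda/n$ — it arises from the standard structure of the difference of a viscosity sub- and supersolution (via the theorem on sums), not from the two-sided application — but this does not affect the validity of the argument.
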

\begin{proof}
	The proof is direct consequence of Theorem \ref{comparation} with A.B.P. estimate (Lemma \ref{ABP-fullversion}).
\end{proof}

\begin{theorem}[{\bf Existence}]\label{Existencia}
	Let $\Gamma\in C^{2}$, $\beta\in C^{2}(\overline{\Gamma})$, $\gamma\le 0$ and $\varphi\in C^{0}(\partial \Omega\setminus \Gamma)$. Suppose that there exists $\varsigma\in\partial \mathrm{B}_{1}$ such that $\beta\cdot \varsigma\ge \mu_0$ on $\Gamma$ and assume $(SC)$. In addition, suppose that $\Omega$ satisfies an exterior cone condition at any $x\in \partial \Omega\setminus \overline{\Gamma}$ and satisfies an exterior sphere condition at any $x\in \overline{\Gamma}\cap (\partial \Omega\setminus \Gamma)$. Then, there exists a unique viscosity solution $u\in C^{0}(\overline{\Omega})$ of
	$$
	\left\{
	\begin{array}{rclcl}
		F(D^2u, x) &=& f(x) & \mbox{in} & \Omega,\\
		 \mathcal{B}(x,u,Du)&=& g(x) & \mbox{on} & \Gamma\\
		u(x)&=&\varphi(x) & \mbox{on} & \partial \Omega\setminus \Gamma.
	\end{array}
	\right.
	$$
\end{theorem}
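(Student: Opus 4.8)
The plan is to establish existence by \textbf{Perron's method}, exploiting that a comparison principle for the mixed problem is already available: combining Theorem \ref{comparation} with the A.B.P. bound of Lemma \ref{ABP-fullversion} (exactly as in the proof of Theorem \ref{Unicidade}) shows that any $C^0$-viscosity subsolution lies below any $C^0$-viscosity supersolution throughout $\overline{\Omega}$. It therefore suffices to exhibit a maximal subsolution and to check, with the help of barriers built from the exterior cone/sphere conditions, that it is continuous up to $\partial\Omega$ and attains the prescribed boundary data.

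\textbf{Step 1: ordered global barriers.} I would first produce $\underline{u}\le\overline{u}$ in $C^0(\overline{\Omega})$ such that $\underline{u}$ (resp. $\overline{u}$) is a $C^0$-viscosity subsolution (resp. supersolution) of $F(D^2 u,x)=f$ in $\Omega$, satisfies $\mathcal{B}(x,\underline{u},D\underline{u})\le g$ (resp. $\ge g$) on $\Gamma$, and satisfies $\underline{u}\le\varphi\le\overline{u}$ on $\partial\Omega\setminus\Gamma$. Thanks to the normalization $F(\mathcal{O}_{n\times n},\overrightarrow{0},0,x)=0$, uniform ellipticity, and $(SC)$, functions of the form $\pm\big(A+B\,\mathrm{dist}(x,\partial\Omega)+C|x-x_0|^2\big)$ with large constants $A,B,C$ depending on $\|f\|_{L^n(\Omega)}$, $\|g\|_{L^{\infty}(\Gamma)}$ and $\|\varphi\|_{L^{\infty}(\partial\Omega\setminus\Gamma)}$ do the job; the sign conditions $\gamma\le 0$ and $\beta\cdot\overrightarrow{\textbf{n}}\ge\mu_0>0$ make the oblique inequalities point the right way.

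\textbf{Step 2: the Perron candidate.} Let $\mathcal{S}_\varphi$ be the family of $w\in C^0(\overline{\Omega})$ that are $C^0$-viscosity subsolutions of $F(D^2 w,x)=f$ in $\Omega$, satisfy $\mathcal{B}(x,w,Dw)\le g$ on $\Gamma$ and $w\le\varphi$ on $\partial\Omega\setminus\Gamma$ in the viscosity sense, and obey $w\le\overline{u}$. By Step 1, $\underline{u}\in\mathcal{S}_\varphi$, so the family is nonempty; set $u(x)\defeq\sup_{w\in\mathcal{S}_\varphi}w(x)$, which satisfies $\underline{u}\le u\le\overline{u}$. The standard Perron dichotomy applied to the semicontinuous envelopes $u^{*},u_{*}$ then gives: $u^{*}$ is a subsolution, a consequence of the stability of the subsolution property under suprema, for which the continuity of $F$ in $(\mathrm{X},q,r)$ and Lemma \ref{Est} are invoked; while $u_{*}$ is a supersolution, proved by contradiction via a small localized vertical perturbation of $u$ that would remain in $\mathcal{S}_\varphi$ yet strictly exceed $u$ near the failure point, contradicting maximality.

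\textbf{Step 3: boundary attainment and conclusion.} On the genuinely Dirichlet portion $\partial\Omega\setminus\overline{\Gamma}$ the exterior cone condition yields upper and lower barriers that squeeze $u=\varphi$ with continuity there; at the interface $\overline{\Gamma}\cap(\partial\Omega\setminus\Gamma)$ the exterior sphere condition provides the analogous barriers; and along $\Gamma$ the $C^{0,\alpha'}$ estimate of Theorem \ref{Holder_Est} gives continuity and the correct realization of the oblique condition. Hence $u^{*}=u_{*}=u$ is the desired $C^0$-viscosity solution, and uniqueness is Theorem \ref{Unicidade}. The main obstacle is the perturbation/barrier construction of Steps 2--3 near the corner set $\overline{\Gamma}\cap(\partial\Omega\setminus\Gamma)$, where the oblique and Dirichlet regimes coexist: the perturbation must simultaneously respect the interior equation, the oblique inequality on $\Gamma$, and the Dirichlet constraint on $\partial\Omega\setminus\Gamma$, which is precisely why the exterior sphere condition at the junction is assumed in the statement.
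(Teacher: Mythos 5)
Your strategy coincides with the paper's own: the authors establish existence precisely by invoking the Perron construction of \cite[Theorem 3.3]{LiZhang}, i.e.\ comparison (Theorem \ref{comparation} together with Lemma \ref{ABP-fullversion}), a maximal-subsolution argument, and boundary barriers built from the exterior cone and sphere conditions, with uniqueness supplied by Theorem \ref{Unicidade}. One concrete point needs fixing before your sketch goes through: in Steps 1--2 the oblique inequalities on $\Gamma$ are reversed relative to Definition \ref{VSC_0}. With the paper's convention ($\beta\cdot\overrightarrow{\textbf{n}}\ge\mu_0>0$, test functions touching from \emph{above}), a viscosity \emph{sub}solution satisfies $\mathcal{B}(x,u,D\varphi)\ge g$ on $\Gamma$ and a \emph{super}solution satisfies $\mathcal{B}\le g$. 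The Perron family $\mathcal{S}_\varphi$ must consist of subsolutions carrying the subsolution boundary inequality $\mathcal{B}(x,w,Dw)\ge g$ on $\Gamma$ — that is the one-sided condition preserved under suprema — whereas as written your family imposes the supersolution inequality, and the supremum would then fail to inherit the oblique condition. The barriers themselves are consistent with the correct labelling: for $\overline{u}=A+B\,\mathrm{dist}(x,\partial\Omega)+\cdots$ one has $\beta\cdot D\overline{u}\le -B\mu_0+O(C)$ on $\Gamma$, which for $B$ large gives exactly the \emph{super}solution inequality $\mathcal{B}\le g$, as required.
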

\begin{proof}
The proof follows the same lines as the one in \cite[Theorem 3.3]{LiZhang} with minor modifications. For instance, we must invoke Theorem \ref{Unicidade} instead of \cite[Theorem 3.2]{LiZhang}. Other necessary modifications are the same made in the proof of Theorem \ref{comparation}.
\end{proof}

Next result is a key tool in our tangential approach. Precisely, it describes how the point-wise convergence of $F_{\tau}$  to $F^{\sharp}$ takes place.

\begin{lemma}
Let $F$ be a uniformly elliptic operator and assume $F^{\sharp}$ there exists. Then, given $\epsilon >0$ there exists $\tau_0(\lambda, \Lambda, \epsilon, \psi_{F^{\ast}}) >0$ such that, for every $\tau  \in (0, \tau_0)$ there holds
$$
	\frac{\left|\tau F \left(\tau^{-1} \mathrm{X}, 0, 0, x\right) - F^{\sharp}(\mathrm{X}, 0, 0, x) \right|}{1+\|\mathrm{X}\|} \le \epsilon,
$$
for every $\mathrm{X} \in \text{Sym}(n)$.
\end{lemma}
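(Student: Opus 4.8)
The plan is to read the claim as a uniform-convergence statement for the rescaled operators
\[
F_{\tau}(\mathrm{X}) \defeq \tau\,F\!\left(\tau^{-1}\mathrm{X},0,0,x\right), \qquad \tau>0,
\]
with $x$ held fixed, and then to exploit the positive $1$-homogeneity of the recession profile to reduce the unbounded regime $\|\mathrm{X}\|\to\infty$ to the closed unit ball of $\mathrm{Sym}(n)$. First I would record the elementary structural facts. Applying the uniform-ellipticity inequality in (A1) with $q_{1}=q_{2}=0$, $r_{1}=r_{2}=0$ to the pair $(\tau^{-1}\mathrm{X}+\tau^{-1}\mathrm{Y},\,\tau^{-1}\mathrm{X})$ and multiplying through by $\tau$, the $1$-homogeneity of Pucci's extremal operators gives $\mathscr{P}^{-}_{\lambda,\Lambda}(\mathrm{Y})\le F_{\tau}(\mathrm{X}+\mathrm{Y})-F_{\tau}(\mathrm{X})\le\mathscr{P}^{+}_{\lambda,\Lambda}(\mathrm{Y})$ for every $\tau>0$; together with the normalization $F_{\tau}(\mathcal{O}_{n\times n})=0$ this yields the $\tau$-independent Lipschitz bound $|F_{\tau}(\mathrm{X})-F_{\tau}(\mathrm{Y})|\le n\Lambda\|\mathrm{X}-\mathrm{Y}\|$, which $F^{\sharp}$ inherits as the pointwise limit provided by \eqref{Reces}. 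A one-line computation also gives the covariance $F_{\tau}(s\mathrm{X})=s\,F_{\tau/s}(\mathrm{X})$ for all $s>0$, whence $F^{\sharp}(s\mathrm{X})=s\,F^{\sharp}(\mathrm{X})$ upon letting $\tau\to0^{+}$.

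Set $G_{\tau}\defeq F_{\tau}-F^{\sharp}$, so that $G_{\tau}(\mathcal{O}_{n\times n})=0$, $\mathrm{Lip}(G_{\tau})\le 2n\Lambda$ uniformly in $\tau$, and $G_{\tau}\to0$ pointwise. The heart of the argument is the observation that a pointwise-convergent \emph{equicontinuous} family converges uniformly on the compact set $\{\mathrm{X}\in\mathrm{Sym}(n):\|\mathrm{X}\|\le1\}$: were this false, there would exist $\epsilon_{0}>0$, $\tau_{k}\downarrow0$ and $\mathrm{X}_{k}$ with $\|\mathrm{X}_{k}\|\le1$ and $|G_{\tau_{k}}(\mathrm{X}_{k})|\ge\epsilon_{0}$; passing to a subsequence with $\mathrm{X}_{k}\to\mathrm{X}_{\ast}$, the uniform Lipschitz bound forces $|G_{\tau_{k}}(\mathrm{X}_{\ast})|\ge\epsilon_{0}-2n\Lambda\|\mathrm{X}_{k}-\mathrm{X}_{\ast}\|\to\epsilon_{0}$, contradicting $G_{\tau_{k}}(\mathrm{X}_{\ast})\to0$. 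This produces a threshold $\tau_{0}>0$, depending only on $n,\lambda,\Lambda,\epsilon$ and on the modulus of the convergence in \eqref{Reces} (the quantity abbreviated $\psi_{F^{\ast}}$ in the statement), such that $|G_{\tau}(\mathrm{X})|\le\epsilon$ whenever $\|\mathrm{X}\|\le1$ and $0<\tau<\tau_{0}$. To conclude, for $\|\mathrm{X}\|\le1$ we get $|G_{\tau}(\mathrm{X})|/(1+\|\mathrm{X}\|)\le|G_{\tau}(\mathrm{X})|\le\epsilon$ directly, while for $\rho\defeq\|\mathrm{X}\|>1$ we write $\mathrm{X}=\rho\hat{\mathrm{X}}$ with $\|\hat{\mathrm{X}}\|=1$ and use the covariance to get $G_{\tau}(\mathrm{X})=\rho\,G_{\tau/\rho}(\hat{\mathrm{X}})$; since $\tau/\rho<\tau<\tau_{0}$ and $\|\hat{\mathrm{X}}\|=1$, the previous bound applies and gives $|G_{\tau}(\mathrm{X})|/(1+\|\mathrm{X}\|)=\tfrac{\rho}{1+\rho}\,|G_{\tau/\rho}(\hat{\mathrm{X}})|\le\epsilon$, which is precisely the asserted estimate.

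The only genuinely substantive step is the pointwise-to-uniform upgrade: \eqref{Reces} provides convergence at each individual $\mathrm{X}$ but nothing uniform a priori, and uniform ellipticity is exactly what rescues this, since it is what makes $\{F_{\tau}\}$ equicontinuous. The one bookkeeping point to keep in mind is that the large-$\|\mathrm{X}\|$ reduction runs the estimate at parameter $\tau/\rho$, which stays under $\tau_{0}$ precisely because $\rho>1$; this is why it is convenient to establish uniform convergence on the closed unit \emph{ball} rather than on the unit sphere, so that the leftover range $\|\mathrm{X}\|<1$ is automatically covered. Finally, should one need $\tau_{0}$ to be independent of $x$ for the subsequent applications, it is enough to additionally invoke the oscillation control (A3) on $\psi_{F^{\sharp}}$ (or the hypothesis $(SC)$), which furnishes an $x$-uniform modulus for the convergence.
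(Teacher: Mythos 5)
Your proof is correct and follows essentially the same route as the argument the paper defers to in \cite{ST} and \cite{PT}: uniform Lipschitz bounds from ellipticity upgrade the pointwise convergence in \eqref{Reces} to uniform convergence on the unit ball, and the scaling covariance $F_{\tau}(s\mathrm{X})=s\,F_{\tau/s}(\mathrm{X})$ handles the regime $\|\mathrm{X}\|>1$, which is exactly what the weight $1+\|\mathrm{X}\|$ is designed for. Since the paper omits the proof entirely, your write-up is a faithful and complete reconstruction of the omitted argument, including the correct remark that $x$-uniformity of $\tau_{0}$ requires an additional hypothesis such as (A3) or $(SC)$.
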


\begin{proof}
The proof follows the same lines as the one in \cite{ST} (see also \cite{PT}). For this reason, we omit it here.
\end{proof}

\begin{remark} It is worth highlighting that the $W^{2, p}$ estimates from Theorem \ref{T1} depends not only on universal constants, but in effect on the ``modulus of convergence'' $F_{\tau} \to F^{\sharp}$. Precisely, by defining $\varsigma:(0, \infty) \to (0, \infty)$ as follows
$$
\displaystyle  \varsigma(\varepsilon) \defeq \sup_{\mathrm{X} \in \text{Sym}(n) \atop{\tau \in (0, \tau_0(\varepsilon))}} \left\{\frac{\left|\tau F\left(\tau^{-1} \mathrm{X}, 0, 0, x\right) - F^{\sharp}(\mathrm{X}, 0, 0, x) \right|}{1+\|\mathrm{X}\|}\right\}, \quad (\varepsilon \to 0^{+}\,\,\,\Rightarrow\,\,\,\varsigma(\varepsilon)\to 0^+),
$$
then the constant $\mathrm{C}>0$ appearing in up-to-the boundary $W^{2,p}$ \textit{a priori} estimates \eqref{2} also depends on $\varsigma$.
\end{remark}

\subsection{Approximation device in geometric tangential analysis} \label{section3}

\hspace{0.4cm} The main result of this section provides a compactness method that will be used as a key point throughout the whole paper. As a matter of fact, we will show that if our equation is close enough to the homogeneous equation with constant coefficients, then our solution is sufficiently close to a solution of the homogeneous equation with frozen coefficients.  At the core of our techniques is the notion of the recession operator (see Definition \ref{DefAC}). The appropriate way to formalize this intuition is with an approximation lemma.


\begin{lemma}[{\bf Approximation Lemma}] \label{Approx}
	Let $n \le p < \infty$, $0 \le \nu \le 1$ and assume that $(A1)-(A4)$ are in force. Then, for every $\hat{\delta} >0$, $\varphi \in C^0(\partial \mathrm{B}_1(0^{\prime},\nu))$ with $\|\varphi\|_{L^{\infty}(\partial \mathrm{B}_1(0^{\prime},\nu))} \le \mathfrak{c}_1$ and $g \in C^{1,\alpha}(\overline{\mathrm{T}}_2)$ with $0 < \alpha < 1$  and $\|g\|_{C^{1,\alpha}(\overline{\mathrm{T}}_2)} \le \mathfrak{c}_2$ for some $\mathfrak{c}_2 >0$ there exist positive constants $\epsilon =\epsilon(\hat{\delta},n, \mu_0, p, \lambda, \Lambda, \gamma,\mathfrak{c}_1, \mathfrak{c}_2) < 1$ and $\tau_0 = \tau_0(\hat{\delta}, n, \lambda,\Lambda, \mu_0, \mathfrak{c}_1, \mathfrak{c}_2) >0$ such that, if
	$$
	\max\left\{ |F_{\tau}(\mathrm{X},x) - F^{\sharp}(\mathrm{X},x)|, \, \|\psi_{F^{\sharp}}(\cdot,0)\Vert_{L^{p}(\mathrm{B}^{+}_{r})},\,\|f\|_{L^{p}(\mathrm{B}^+_{r})}  \right\} \le \epsilon \quad \textrm{and} \quad \tau \le \tau_0
	$$
	then, any two $L^p$-viscosity solutions $u$ (normalized, i.e., $\|u\|_{L^{\infty}(\mathrm{B}^{+}_r(0^{\prime},\nu))}\le 1$) and $\mathfrak{h}$ of
	$$
	\left\{
	\begin{array}{rclcl}
		F_{\tau}(D^2u,x) &=& f(x)& \mbox{in} & \mathrm{B}^{+}_r(0^{\prime},\nu) \cap \mathbb{R}^{n}_+ \\
		 \mathcal{B}(x,u,Du)&=& g(x) & \mbox{on} &  \mathrm{B}_r(0^{\prime},\nu) \cap \mathrm{T}_r\\
		u(x) &=& \varphi(x) &\mbox{on}& \overline{\partial \mathrm{B}_r(0^{\prime},\nu) \cap \mathbb{R}^n_+}
	\end{array}
	\right.
	$$
	and
	$$
	\left\{
	\begin{array}{rclcl}
		F^{\sharp}(D^2 \mathfrak{h},0) &=& 0& \mbox{in} & \mathrm{B}^{+}_{\frac{7}{8}r}(0^{\prime},\nu) \cap \mathbb{R}^n_+ \\
		 \mathcal{B}(x,\mathfrak{h},D\mathfrak{h}) &=& g(x) & \mbox{on} &  \mathrm{B}_{\frac{7}{8} r}(0^{\prime},\nu) \cap \mathrm{T}_r\\
		\mathfrak{h}(x) &=& u(x) &\mbox{on}& \overline{\partial \mathrm{B}_{\frac{7}{8}r}(0^{\prime}, \nu) \cap \mathbb{R}^n_+}		
	\end{array}
	\right.
	$$
	satisfy
	$$
	\|u-\mathfrak{h}\|_{L^{\infty}(\mathrm{B}^{+}_{\frac{7}{8}r}(0^{\prime},\nu))} \le \hat{\delta}.
	$$
\end{lemma}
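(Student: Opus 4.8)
The plan is to argue by contradiction using a compactness/stability scheme in the spirit of \cite{PT}, \cite{ST} and \cite{daSR19}, adapted to the half-ball geometry with an oblique boundary condition. Suppose the conclusion fails. Then there exist $\hat\delta_0 > 0$, sequences $\tau_j \to 0$, $\epsilon_j \to 0$, operators $F^j$ (each $(\lambda,\Lambda,\sigma,\xi)$-elliptic, say with $\sigma=\xi=0$ after the rescaling normalization, or keeping them and noting they are absorbed), right-hand sides $f_j$ with $\|f_j\|_{L^p(\mathrm{B}^+_r)} \le \epsilon_j$, boundary data $g_j \in C^{1,\alpha}(\overline{\mathrm{T}}_2)$ with $\|g_j\|_{C^{1,\alpha}} \le \mathfrak{c}_2$, exterior data $\varphi_j$ with $\|\varphi_j\|_{L^\infty} \le \mathfrak{c}_1$, normalized $L^p$-viscosity solutions $u_j$ of the first system (with $F^j_{\tau_j}$), and the corresponding $\mathfrak{h}_j$ solving the $F^{\sharp}$-problem with boundary datum $u_j$, such that $\|u_j - \mathfrak{h}_j\|_{L^\infty(\mathrm{B}^+_{7r/8}(0',\nu))} > \hat\delta_0$. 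We may also assume (passing to a subsequence, since the half-ball parameter $\nu \in [0,1]$ is bounded) that $\nu_j \to \nu_\infty \in [0,1]$, and by (A3) that $\psi_{F^{\sharp}_j}(\cdot,0)$ is controlled, so that $F^{\sharp}_j(\cdot,0,0,\cdot) \to F^\infty_\infty$ locally uniformly on compacts of $\mathrm{Sym}(n) \times \mathrm{B}^+_r$ along a further subsequence, for some uniformly elliptic $F_\infty$ (here one normalizes $F^{\sharp}_j(\mathcal O,0,0,x)=0$).

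Next I would extract uniform compactness for the solution sequences. By the A.B.P. estimate (Lemma \ref{ABP-fullversion}) together with the normalization $\|u_j\|_{L^\infty(\mathrm{B}^+_r)} \le 1$, the bound on $g_j$, and $\|f_j\|_{L^n} \le \|f_j\|_{L^p} \le \epsilon_j$, the $u_j$ are uniformly bounded; the same applies to $\mathfrak{h}_j$ since their exterior data is $u_j$ (bounded) and their right-hand side is $0$. Then the global Hölder estimate of Theorem \ref{Holder_Est} (applied up to the flat portion $\mathrm{T}_r$ where the oblique condition holds, and using that near the spherical part the boundary data $\varphi_j$, resp. $u_j$, are continuous — one may need a barrier/modulus-of-continuity argument there, or shrink slightly to $\mathrm{B}^+_{7r/8}$ where only the oblique face matters) gives a uniform $C^{0,\alpha'}$ bound on $\overline{\mathrm{B}^+_{7r/8}(0',\nu_j)}$. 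By Arzelà–Ascoli, along a subsequence $u_j \to u_\infty$ and $\mathfrak{h}_j \to \mathfrak{h}_\infty$ uniformly on $\overline{\mathrm{B}^+_{7r/8}}$ (after a harmless translation to a fixed half-ball). Passing to the limit in the oblique boundary condition is legitimate because $\beta,\gamma,g_j$ converge (the $g_j$ along a subsequence in $C^{1,\alpha'}$ for $\alpha' < \alpha$), and the equality $\mathcal B(x,u_j,Du_j)=g_j$ is stable under this convergence in the viscosity sense.

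Now I would invoke the Stability Lemma (Lemma \ref{Est}): since $\tau_j F^j(\tau_j^{-1}\mathrm{X},0,0,x) = F^j_{\tau_j}(\mathrm{X},x) \to F^{\sharp}$ in the sense quantified by the preceding Lemma (the ``$F_\tau \to F^\sharp$'' lemma), and moreover $|F^j_{\tau_j}(\mathrm{X},x) - F^{\sharp}_j(\mathrm{X},x)| \le \epsilon_j \to 0$ by hypothesis, while $f_j \to 0$ in $L^p$, the limit $u_\infty$ is an $L^p$-viscosity solution of $F_\infty(D^2 u_\infty,\cdot)=0$ in $\mathrm{B}^+_{7r/8}(0',\nu_\infty)$ with $\mathcal B(x,u_\infty,Du_\infty)=g_\infty$ on the flat face; the same stability argument (this time trivial, since $F^{\sharp}_j \to F_\infty$ directly and the RHS is already $0$) shows $\mathfrak{h}_\infty$ solves the identical boundary value problem, with the same exterior datum $u_\infty$ on the spherical part (both $u_j$ and $\mathfrak h_j$ share that datum in the limit). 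By the uniqueness Theorem \ref{Unicidade} for the oblique problem — applicable because $F_\infty$ satisfies (A1), $\beta \in C^2$, $\gamma \le 0$, $\beta\cdot\varsigma \ge \mu_0$, and the half-ball has the requisite exterior sphere/cone conditions — we conclude $u_\infty \equiv \mathfrak{h}_\infty$ on $\mathrm{B}^+_{7r/8}$. This contradicts $\|u_j - \mathfrak{h}_j\|_{L^\infty} > \hat\delta_0$ after passing to the limit. The main obstacle is the boundary analysis near the interface between the oblique face $\mathrm{T}_r$ and the spherical part of $\partial\mathrm{B}^+_r(0',\nu)$: one must ensure the compactness (uniform $C^{0,\alpha'}$ up to $\overline{\mathrm{B}^+_{7r/8}}$) genuinely holds and that stability of the oblique boundary condition passes to the limit there; this is handled by working slightly inside (on $\mathrm{B}^+_{7r/8}$ rather than $\mathrm{B}^+_r$) and by the boundary regularity of \cite{LiZhang} together with standard barrier constructions, exactly as in the interior-to-boundary transition of \cite{daSR19}, but requires care because $\nu$ may degenerate to $0$ (flat boundary) or stay bounded away from it.
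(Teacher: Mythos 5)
Your scheme is the same compactness-and-contradiction argument the paper uses up to and including the identification of the limit $u_\infty$ (A.B.P. for uniform bounds, Theorem \ref{Holder_Est} plus Arzel\`a--Ascoli for $C^{0,\alpha'}$ compactness of $u_j$, careful handling of $\nu_j\to\nu_\infty$, Stability Lemma \ref{Est} to show $u_\infty$ solves the limiting recession problem). Where you diverge is in how the contradiction is closed. You extract a second limit $\mathfrak h_\infty$ of the $\mathfrak h_j$, identify its mixed boundary data, and invoke the uniqueness Theorem \ref{Unicidade} to force $u_\infty\equiv\mathfrak h_\infty$. The paper instead never compactifies the sequence $(\mathfrak h_j)$ at all: it forms $w_{j_k}=u_\infty-\mathfrak h_{j_k}$, observes that $w_{j_k}\in\mathcal S(\lambda/n,\Lambda,0)$ with oblique datum $g_\infty-g_{j_k}$ on the flat face and Dirichlet datum $u_\infty-u_{j_k}$ on the spherical face, and applies the A.B.P. estimate (Lemma \ref{ABP-fullversion}) to conclude $\|w_{j_k}\|_{L^\infty}\to 0$ directly, since both boundary data vanish uniformly. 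Your route is workable but costs more at exactly the point you flag as delicate: to assert that $\mathfrak h_\infty=u_\infty$ on the spherical part of $\partial\mathrm{B}^+_{7r/8}$ you need uniform convergence of $\mathfrak h_j$ up to the boundary of \emph{its own} domain (including the corner where the flat and spherical faces meet), which requires barrier constructions with a modulus uniform in $j$; the paper's A.B.P. argument makes this entirely unnecessary. A second, smaller point: Theorem \ref{Unicidade} rests on the comparison principle of Theorem \ref{comparation}, which formally requires $\beta\in C^2(\overline\Gamma)$, whereas the lemma's standing hypotheses only give $\beta\in C^{1,\alpha}$ (indeed only $C^0$ under (A2)); the A.B.P. route needs no such regularity of $\beta$, only the transversality $\beta\cdot\varsigma\ge\mu_0$ and $\gamma\le 0$. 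So both approaches reach the conclusion, but the paper's is lighter in hypotheses and avoids the boundary-trace identification that your version leaves as a pending barrier argument.
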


\begin{proof}
	We may assume, without loss of generality that $r=1$. We will proceed with a \textit{reductio at absurdum} argument. Thus, let us assume that the claim is not satisfied. Then, there exists $\delta_0>0$ and a sequence of functions $(F_{\tau_j})_{j \in \mathbb{N}}$, $(F^{\sharp}_j)_{j \in \mathbb{N}}$, $(u_{j})_{j \in \mathbb{N}}$,  $(f_j)_{j \in \mathbb{N}}$, $(\varphi_j)_{j \in \mathbb{N}}$, $\{g_j\}_{j \in \mathbb{N}}$ and $(\mathfrak{h}_j)_{j \in \mathbb{N}}$ linked through
	$$
	\left\{
	\begin{array}{rclcl}
		F_{\tau_j}(D^2u_j,x) &=& f_j(x)  & \mbox{in} &  \mathrm{B}_1(0^{\prime},\nu_j) \cap \mathbb{R}^{n}_{+}\\
		\mathcal{B}(x,u_{j},Du_{j})&=& g_j(x) & \mbox{on} &  \mathrm{B}_1(0^{\prime},\nu_j) \cap T_1\\
		u_j(x) &=& \varphi_j(x)  &\mbox{on}& \overline{\partial \mathrm{B}_1(0^{\prime},\nu) \cap \mathbb{R}^n_+}
	\end{array}
	\right.
	$$
	and
	$$
	\left\{
	\begin{array}{rclcl}
		F^{\sharp}_{j}(D^2 \mathfrak{h}_j,0) &=& 0 & \mbox{in} &  \mathrm{B}_{\frac{7}{8}}(0^{\prime},\nu_j) \cap \mathbb{R}^{n}_{+}\\
		 \mathcal{B}(x,\mathfrak{h}_{j},D\mathfrak{h}_{j})&=& g_j(x) & \mbox{on} &\mathrm{B}_{\frac{7}{8}}(0^{\prime},\nu_j) \cap \mathrm{T}_1\\
		\mathfrak{h}_j(x) &=& u_j(x) &\mbox{on}& \overline{\partial \mathrm{B}_{\frac{7}{8}}(0^{\prime}, \nu) \cap \mathbb{R}^n_+}.
	\end{array}
	\right.
	$$
	where $\tau_j$, $\Vert \psi_{F_{\tau_{j}}^{\sharp}}(\cdot,0)\Vert_{L^{p}(\mathrm{B}^{+}_{1})}$, $\|f_j\|_{L^p(\mathrm{B}^+_{1})}$ go to zero as $j \to \infty$ and such that
	\begin{equation} \label{1}
		\|u_j-\mathfrak{h}_j\|_{L^{\infty}(\mathrm{B}_{\frac{7}{8}}(0^{\prime}, \nu) \cap \mathbb{R}^n_+)} > \delta_0.
	\end{equation}
	Here,  $\varphi_j\in C^0(\partial \mathrm{B}_{1}(0^{\prime},\nu_j))$ and $g_j \in C^{1,\alpha}(\overline{\mathrm{T}_2})$ satisfy  $\|\varphi_j\|_{L^{\infty}(\partial \mathrm{B}_{1}(0^{\prime},\nu_j))}\leq \mathfrak{c}_{1}$ and $\|g_j\|_{C^{1,\alpha}(\overline{\mathrm{T}_2})} \le \mathfrak{c}_2$, respectively. From Theorem $\ref{Holder_Est}$, we have for all $0<\rho<1$
	\begin{equation} \label{4.6}
		\|u_j\|_{C^{0, \alpha^{\prime}}(  \overline{\mathrm{B}_{1-\rho}(0^{\prime},\nu_j) \cap \mathbb{R}^n_+}     )}\le \mathrm{C}(n,\lambda,\Lambda,\mathfrak{c}_{1},\mathfrak{c}_{2},\mu_0) \rho^{-\alpha^{\prime}}
	\end{equation}
	for some $\alpha^{\prime} = \alpha^{\prime}(n,\lambda,\Lambda, \mu_0)$ and sufficiently large $j$. Suppose that there exists a number $\nu_{\infty}$ and a subsequence $\{\nu_{j_k}\}$ such that $\nu_{j_k} \to \nu_{\infty}$ as $k \to +\infty$. We can assume that such a subsequence is monotone. If $v_{j_k}$ is decreasing, we can check that
	$$
	\mathrm{B}_1(0^{\prime},\nu_{\infty}) \cap \mathbb{R}^n_+ \subset \mathrm{B}_1(0^{\prime},\nu_{j_k}) \cap \mathbb{R}^n_+
	$$
	for any $k$. Thus, we observe that
	\begin{equation} \label{4.7}
		\|u_{j_k}\|_{C^{0, \alpha^{\prime}} ( \overline{\mathrm{B}_{15/16}(0^{\prime},\nu_{\infty}) \cap \mathbb{R}^n_+})}\le C^0(\mathfrak{c}_1,\mathfrak{c}_2,n,\lambda,\Lambda,\mu_0)
	\end{equation}
	by using \eqref{4.6}. On the other hand, if $\nu_{j_k}$ is increasing, there exists a number $k_0$ such that
	$$
	\mathrm{B}_{31/32}(0^{\prime}, \nu_{k_j}) \cap \mathbb{R}^n_+ \supset  \mathrm{B}_{15/16}(0^{\prime},\nu_{\infty}) \cap \mathbb{R}^n_+ \quad \textrm{for} \,\,\,\, k \ge k_0.
	$$
	Then we can deduce once again \eqref{4.7} for some proper subsequence $u_{j_k}$. Thus, according to Arzel\`{a}-Ascoli's compactness criterium, there exist functions $u_{\infty} \in C^{0,\alpha}(\overline{\mathrm{B}_{15/16}(0^{\prime},\nu_{\infty}) \cap \mathbb{R}^n_+})$, $g_{\infty} \in C^0(\partial \mathrm{B}^+_{1})$ and subsequences such that $u_{j_k} \to u_{\infty}$  in $C^{0,\alpha^{\prime}}(\mathrm{B}^+_{1})$ and $u_{\infty}= g_{\infty}$ on $\mathrm{B}_{15/16}(0^{\prime},\nu_{\infty}) \cap \mathrm{T}_1$.
	
	Since the functions $F^{\sharp}_{j}(\cdot, 0) \to F^{\sharp}_{\infty}(\cdot, 0)$  uniformly in compact sets of $ \textit{Sym}(n)$ and for every $\varphi \in C^2(\overline{\mathrm{B}^+_2})$,
	\begin{eqnarray*}
		|F_{\tau_{j_k}}(D^2 \varphi, x) - f_{j_k}(x) - F^{\sharp}_{\infty}(D^2 \varphi, 0)| &\le& |F_{\tau_{j_k}}(D^2 \varphi, x) - F^{\sharp}_{j_k}(D^2 \varphi, x)|+|f_{j_{k}}|+ \\
		&+& |F^{\sharp}_{j_k}(D^2 \varphi, x) - F^{\sharp}_{j_{k}}(D^2 \varphi, 0)|+\\
		&+&|F^{\sharp}_{j_k}(D^2 \varphi, 0)-F^{\sharp}_{\infty}(D^2 \varphi, 0)| \\
		&\le& |F_{\tau_{j_k}}(D^2 \varphi, x) - F^{\sharp}_{j_k}(D^2 \varphi, x)|+|f_{j_{k}}|+\\
		&+&\psi_{F_{\tau_{j_{k}}}^{\sharp}}(x,0)(1+|D^{2}\varphi|)
	\end{eqnarray*}
	then
	$$
	\lim_{k \to +\infty}  \| F_{\tau_{j_k}}(D^2 \varphi,  x) - f_{j_k}(x) - F^{\sharp}_{\infty}(D^2 \varphi, 0)  \|_{L^p(\mathrm{B}_r(x_0))} =0,
	$$
	for any ball $\mathrm{B}_r(x_0) \subset \mathrm{B}_{15/16}(0,\nu_{\infty}) \cap \mathbb{R}^n_+$. Therefore, the Stability Lemma \ref{Est} ensures that $u_{\infty}$ satisfies
	$$
		\left\{
		\begin{array}{rclcl}
			F^{\sharp}_{\infty}(D^2 u_{\infty},0) &=& 0  & \mbox{in} &  \mathrm{B}_{15/16}(0^{\prime},\nu_{\infty}) \cap \mathbb{R}^n_+ \\
			 \mathcal{B}(x,u_{\infty},Du_{\infty})&=& g_{\infty}(x)  & \mbox{on} & \mathrm{B}_{15/16}(0^{\prime},\nu_{\infty}) \cap T_1
		\end{array}
		\right.
$$
	in the viscosity sense. Now, we consider $w_{j_k} \colon= u_{\infty} - \mathfrak{h}_{j_k}$ for each $k$. Then, by \cite[Lemma 4.3]{BJ}, $w_{j_k}$ satisfies
$$
		\left\{
		\begin{array}{rclcl}
			w_{j_k} &\in& \mathcal{S}(\frac{\lambda}{n}, \Lambda, 0 )    & \mbox{in} &  \mathrm{B}_{7/8}(0^{\prime},\nu_{\infty}) \cap \mathbb{R}^n_+ \\
			\ \mathcal{B}(x,w_{j_{k}},Dw_{j_{k}}) &=& g_{\infty}(x)-g_{j_k}(x)  & \mbox{on} & \mathrm{B}_{7/8}(0^{\prime},\nu_{\infty}) \cap T_1\\
			w_{j_k}(x) &=& u_{\infty}(x) - u_{j_k}(x) &\mbox{on}& \overline{\partial \mathrm{B}_{7/8}(0^{\prime},\nu_{\infty}) \cap \mathbb{R}^n_+}.
		\end{array}
		\right.
$$
	Now, using Lemma \ref{ABP-fullversion}, we observe that
	\begin{eqnarray*}
		\|w_{j_k}\|_{L^{\infty}(\mathrm{B}_{7/8}(0^{\prime},\nu_{\infty}) \cap \mathbb{R}^n_+)} &\le& \|u_{\infty} - u_{j_k}\|_{L^{\infty}(\partial \mathrm{B}_{7/8}(0^{\prime},\nu_{\infty}))} +\\
		&+& \mathrm{C}(n,\lambda, \Lambda, \mu_0) \|g_{\infty} - g_{j_k}\|_{L^{\infty}(\mathrm{B}_{7/8}(0^{\prime},\nu_{\infty})\cap \mathrm{T}_1)} \to 0 \quad \text{as}\,\,\, k \to +\infty.
	\end{eqnarray*}
This is, $w_{j_k}$ converges uniformly to zero. This implies that $\mathfrak{h}_{j_k}$ converges uniformly to $u_{\infty}$ in $\overline{\mathrm{B}_{7/8}(0^{\prime}, \nu_{\infty}) \cap \mathbb{R}^n_+}$, which contradicts \eqref{1} for $k \gg 1$.
\end{proof}


\section{$W^{2, p}$ estimates: Proof of Theorem \ref{T1}}\label{Sec4}

\hspace{0.4cm}Before starting the proof of Theorem \ref{T1} we must introduce some useful terminologies: let $$
   \mathcal{Q}^d_r(x_0) \defeq \left(x_0-\frac{r}{2}, x_0 + \frac{r}{2}\right)^d
$$
be the $d-$dimensional cube of side-length $r$ and center $x_0$. In the case $x_0=0$, we will write $\mathcal{Q}^d_r$. Furthermore, if $d=n$ we will write $\mathcal{Q}_r(x_0)$.

We will need later on the Calder\'{o}n-Zygmund decomposition Lemma for cubes: Let $\mathcal{Q}_{1}^{n-1}\times (0,1)$ unit cube. We split it into $2^{n}$ cubes of half side. We do the same splitting procedure with each one of these $2^{n}$ cubes, and we iterate such a process. Each cube in such a procedure is called a dyadic cube. Given two $\mathcal{Q}, \tilde{\mathcal{Q}}\neq \mathcal{Q}_{1}^{n-1}\times(0,1)$ dyadic cubes, we say that $\mathcal{Q}$ is a predecessor cube of $\tilde{\mathcal{Q}}$ if $\mathcal{Q}$ is one of the $2^{n}$ cubes obtained in the partition of $\tilde{\mathcal{Q}}$. In the sequel, we may enunciate a key result whose proof can be found in Caffarelli-Cabr\'{e}'s Book \cite[Lemma 4.2]{CC}.

\begin{lemma}[{\bf Calder\'{o}n-Zygmund cube decomposition}]\label{Cal-Zyg}
 Let $\mathcal{A}\subset \mathcal{B}\subset \mathcal{Q}_{1}^{n-1}\times (0,1)$ be measurable sets and $\delta\in(0,1)$ such that
 \begin{enumerate}
 \item[({\bf a})] $\Leb(\mathcal{A})\leq \delta$;
 \item[({\bf b})] If $\mathcal{Q}$ is a dyadic cube such that $\Leb(\mathcal{A}\cap \mathcal{Q})>\delta \Leb(\mathcal{Q})$, then $\tilde{\mathcal{Q}}\subset \mathcal{B}$.
 \end{enumerate}
Then, $\Leb(\mathcal{A})\leq \delta \Leb(\mathcal{B})$.
\end{lemma}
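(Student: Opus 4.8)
The plan is to run the classical Calder\'{o}n--Zygmund stopping-time argument on the dyadic filtration of $\mathcal{Q}_1^{n-1}\times(0,1)$. First I would select, among all dyadic subcubes $\mathcal{Q}$ satisfying $\Leb(\mathcal{A}\cap\mathcal{Q}) > \delta\Leb(\mathcal{Q})$, those that are maximal with respect to inclusion; call this (countable) family $\{\mathcal{Q}_j\}_j$. Such maximal cubes exist and yield the decomposition because hypothesis (a) guarantees that the top cube $\mathcal{Q}_1^{n-1}\times(0,1)$ itself is \emph{not} selected (since $\Leb(\mathcal{A})\le\delta=\delta\Leb(\mathcal{Q}_1^{n-1}\times(0,1))$ is precisely the failure of the strict inequality at the top), so one may descend through the dyadic generations along each branch and stop at the first generation, if any, at which the strict density inequality holds. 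Two distinct selected maximal cubes are disjoint: dyadic cubes are either nested or disjoint, and nesting would contradict maximality of the larger one.

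The second step is to show that $\mathcal{A}\subset\bigcup_j\mathcal{Q}_j$ up to a set of measure zero; this is the only genuine analytic input. By the Lebesgue dyadic differentiation (martingale) theorem, for a.e. $x\in\mathcal{A}$ one has $\Leb(\mathcal{A}\cap\mathcal{Q}^{(m)}(x))/\Leb(\mathcal{Q}^{(m)}(x))\to 1$ as $m\to\infty$, where $\mathcal{Q}^{(m)}(x)$ denotes the generation-$m$ dyadic cube containing $x$; in particular some $\mathcal{Q}^{(m)}(x)$ has density strictly larger than $\delta$, and the one of least such generation is a maximal selected cube containing $x$ (any strictly larger dyadic cube also contains $x$, hence has smaller generation, hence density $\le\delta$ by minimality). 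Therefore $x\in\mathcal{Q}_j$ for some $j$.

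Third, for each selected $\mathcal{Q}_j$ let $\tilde{\mathcal{Q}}_j$ be its predecessor cube. By maximality $\tilde{\mathcal{Q}}_j$ is not selected, so $\Leb(\mathcal{A}\cap\tilde{\mathcal{Q}}_j)\le\delta\Leb(\tilde{\mathcal{Q}}_j)$; moreover, since $\Leb(\mathcal{A}\cap\mathcal{Q}_j)>\delta\Leb(\mathcal{Q}_j)$, hypothesis (b) gives $\tilde{\mathcal{Q}}_j\subset\mathcal{B}$. The predecessors need not be pairwise disjoint, so I would pass to the subfamily $\{P_k\}$ of \emph{distinct} predecessors; being dyadic they are pairwise disjoint, they satisfy $\bigcup_j\mathcal{Q}_j\subset\bigcup_k P_k\subset\mathcal{B}$, and each obeys $\Leb(\mathcal{A}\cap P_k)\le\delta\Leb(P_k)$. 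Combining the three steps,
$$
\Leb(\mathcal{A})=\Leb\Big(\mathcal{A}\cap\bigcup_k P_k\Big)=\sum_k\Leb(\mathcal{A}\cap P_k)\le\delta\sum_k\Leb(P_k)=\delta\,\Leb\Big(\bigcup_k P_k\Big)\le\delta\,\Leb(\mathcal{B}),
$$
which is the claim.

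The main obstacle here is essentially bookkeeping rather than analysis: organizing the selected cubes against their predecessors, verifying via dyadic differentiation that the selected family covers $\mathcal{A}$ almost everywhere, and checking that passing to \emph{distinct} predecessors restores pairwise disjointness while preserving both the inclusion in $\mathcal{B}$ and the density bound $\Leb(\mathcal{A}\cap P_k)\le\delta\Leb(P_k)$. No quantitative estimate is delicate; the entire content is the stopping-time construction together with the nesting dichotomy of dyadic cubes.
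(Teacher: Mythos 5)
Your overall strategy is the standard Calder\'{o}n--Zygmund stopping-time argument (note the paper gives no proof of this lemma, deferring to \cite[Lemma 4.2]{CC}, so the comparison is with that classical proof), and your first three steps are correct: hypothesis (a) excludes the top cube from the selection, the maximal selected cubes are pairwise disjoint and cover $\mathcal{A}$ up to a null set by dyadic differentiation, and each predecessor $\tilde{\mathcal{Q}}_j$ lies in $\mathcal{B}$ by (b) while carrying density at most $\delta$ by maximality. The gap is in the final step: \emph{distinct predecessors need not be pairwise disjoint}. Distinct dyadic cubes are either nested or disjoint, and the predecessors of two different maximal selected cubes can sit at different generations and be nested. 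For a one-dimensional caricature take the top cube $(0,1)$, $\delta=\tfrac12$ and $\mathcal{A}=(0,0.15)\cup(\tfrac12,0.8)$: then $(0,\tfrac14)$ and $(\tfrac12,1)$ are maximal selected cubes, and their predecessors $(0,\tfrac12)$ and $(0,1)$ are distinct but nested. Consequently both identities $\Leb\big(\mathcal{A}\cap\bigcup_k P_k\big)=\sum_k\Leb(\mathcal{A}\cap P_k)$ and $\sum_k\Leb(P_k)=\Leb\big(\bigcup_k P_k\big)$ in your displayed chain fail, and the second fails in the harmful direction: with overlaps one only has $\sum_k\Leb(P_k)\ge\Leb\big(\bigcup_k P_k\big)$, so the chain delivers $\Leb(\mathcal{A})\le\delta\sum_k\Leb(P_k)$, which need not be bounded by $\delta\,\Leb(\mathcal{B})$.

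The repair is short and is what Caffarelli--Cabr\'{e} actually do: from the family of predecessors $\{P_k\}$ extract the \emph{maximal} ones, say $\{P_{k_i}\}_i$ (these exist because every ascending chain of dyadic cubes is finite). Two maximal elements cannot be nested, hence they are pairwise disjoint; their union equals $\bigcup_k P_k$, so it still contains $\mathcal{A}$ up to a null set and is still contained in $\mathcal{B}$; and each $P_{k_i}$, being the predecessor of some maximal selected cube, is itself not selected, whence $\Leb(\mathcal{A}\cap P_{k_i})\le\delta\,\Leb(P_{k_i})$. Summing over this genuinely disjoint subfamily gives $\Leb(\mathcal{A})=\sum_i\Leb(\mathcal{A}\cap P_{k_i})\le\delta\sum_i\Leb(P_{k_i})=\delta\,\Leb\big(\bigcup_i P_{k_i}\big)\le\delta\,\Leb(\mathcal{B})$. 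With this one correction your argument is complete.
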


Remember that the Hardy-Littlewood maximal operator is defined as follows for $f \in L^1_{\textrm{loc}}(\mathbb\R^n)$:
$$
	\mathcal{M}(f)(x) \defeq \sup_{\rho>0} \intav{\mathrm{B}_{\rho}(x)} |f(y)| dy.
$$
We say that $\mathrm{P}_{\mathrm{M}}$ is a paraboloid with \textit{opening} $\mathrm{M}>0$ whenever $\mathrm{P}_{\mathrm{M}}(x)= \pm \frac{\mathrm{M}}{2} |x|^2 + p_1\cdot x + p_0$. Observe that such a paraboloid is a convex function in the case of ``plus'' sign and a concave function otherwise. Now, for $u \in C^0(\Omega)$, $\Omega^{\prime} \subset \overline{\Omega}$ and $\mathrm{M} >0$ we define
$$
	\underline{\mathcal{G}}_{\mathrm{M}}(u,\Omega^{\prime}) \defeq \left\{x_0 \in \Omega^{\prime} ; \exists \, \mathrm{P}_{\mathrm{M}} \,\,\, \textrm{concave parabolid s. t.} \,\,\, \mathrm{P}_{\mathrm{M}}(x_0)=u(x_0), \,\, \mathrm{P}_{\mathrm{M}}(x) \le u(x)\,\, \forall \, x \in \Omega^{\prime}\right\}
$$
and
$$
\underline{\mathcal{A}}_{\mathrm{M}}(u,\Omega^{\prime}) \defeq \Omega^{\prime} \setminus \underline{\mathcal{G}}_{\mathrm{M}}(u,\Omega^{\prime}).
$$

Similarly, by using convex paraboloid we define $\overline{\mathcal{G}}_{\mathrm{M}}(u,\Omega^{\prime})$ and $\overline{\mathcal{A}}_{\mathrm{M}}(u,\Omega^{\prime})$ and the sets
$$
	\mathcal{G}_{\mathrm{M}}(u,\Omega^{\prime}) \defeq  \underline{\mathcal{G}}_{\mathrm{M}}(u,\Omega^{\prime}) \cap \overline{\mathcal{G}}_{\mathrm{M}}(u,\Omega^{\prime})\,\,\,\text{and}\,\,\,\mathcal{A}_{\mathrm{M}}(u,\Omega^{\prime}) \defeq \underline{\mathcal{A}}_{\mathrm{M}}(u,\Omega^{\prime}) \cap \overline{\mathcal{A}}_{\mathrm{M}}(u,\Omega^{\prime}).
$$
In addition, we define:
$$
\overline{\Theta}(u,\Omega^{\prime}, x) \defeq \inf\left\{\mathrm{M}>0 ; \, x \in \overline{\mathcal{G}}_{\mathrm{M}}(u,\Omega^{\prime})\right\}.
$$
We also can define $\underline{\Theta}(u,\Omega^{\prime}, x)$. Finally, we define:
$$
\Theta(u,\Omega^{\prime}, x) \defeq \sup\left\{\underline{\Theta}(u,\Omega^{\prime}, x), \overline{\Theta}(u,\Omega^{\prime}, x)\right\}.
$$

\bigskip

The first step towards $W^{2,p}$-estimates up to the boundary are estimates for paraboloids on the boundary. Hence, in this section, we will prove an appropriated power decay on the boundary for $\Leb(\mathcal{G}_{t}(u,\Omega))$. For this end, we will need the following standard technical result (cf. \cite{CC}):
\begin{proposition}\label{P1}
Let $0\leq h: \Omega \to \R$ be a measurable function, $\mu_{h}(t) \defeq \Leb(\{x \in \Omega: h(x) \ge t\})$
its distribution function and $\eta>0$, $\mathrm{M} >1$ constants. Then,
$$
	h \in L^p(\Omega) \Longleftrightarrow  \sum_{j=1}^{\infty} \mathrm{M}^{pj} \mu_h(\eta \mathrm{M}^j)< \infty
$$
for every $p \in (0,+\infty)$. Particularly, there exists a constant $\mathrm{C}=\mathrm{C}(n,\eta, \mathrm{M})$ such that
$$
	\displaystyle \mathrm{C}^{-1}.\left(\sum_{j=1}^{\infty} \mathrm{M}^{pj}.\mu_h(\eta \mathrm{M}^j)\right)^{\frac{1}{p}} \le \|h\|_{L^p(\Omega)} \le \mathrm{C} \left(\Leb(\Omega) + \sum_{j=1}^{\infty} \mathrm{M}^{pj} \mu_h(\eta \mathrm{M}^j)\right)^{\frac{1}{p}}.
$$
\end{proposition}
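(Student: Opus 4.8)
The plan is to prove both directions of the equivalence via the layer-cake (Cavalieri) representation of the $L^p$-norm, comparing the integral $\int_\Omega h^p\,dx$ with the dyadic sum $\sum_j \mathrm{M}^{pj}\mu_h(\eta\mathrm{M}^j)$ by a standard telescoping/geometric-series argument. First I would recall the identity $\int_\Omega h(x)^p\,dx = p\int_0^\infty t^{p-1}\mu_h(t)\,dt$, valid for any nonnegative measurable $h$ and any $p\in(0,\infty)$. Since $\mu_h$ is nonincreasing, I would then chop the half-line $(0,\infty)$ into the dyadic-type intervals $I_0 = (0,\eta]$ and $I_j = (\eta\mathrm{M}^{j-1}, \eta\mathrm{M}^j]$ for $j\ge 1$, and estimate $\int_{I_j} t^{p-1}\mu_h(t)\,dt$ from above and below by comparing $\mu_h(t)$ on $I_j$ with its values at the endpoints $\eta\mathrm{M}^{j-1}$ and $\eta\mathrm{M}^j$, and by using $\int_{I_j}t^{p-1}\,dt = \frac{\eta^p}{p}(\mathrm{M}^{pj}-\mathrm{M}^{p(j-1)}) = \frac{\eta^p}{p}\mathrm{M}^{pj}(1-\mathrm{M}^{-p})$.

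For the upper bound I would write, using monotonicity of $\mu_h$,
$$
p\int_0^\infty t^{p-1}\mu_h(t)\,dt \le \eta^p\mu_h(0^+)\,\mathcal{L}^n(\Omega)\text{-type term} + \sum_{j=1}^\infty \eta^p\mathrm{M}^{pj}\bigl(1-\mathrm{M}^{-p}\bigr)\mu_h(\eta\mathrm{M}^{j-1}),
$$
and then re-index $j-1\mapsto j$ and absorb the extra factor $\mathrm{M}^p$; the term from $I_0$ is bounded by $\eta^p \mathcal{L}^n(\Omega)$ because $\mu_h\le\mathcal{L}^n(\Omega)$, which produces the $\mathcal{L}^n(\Omega)$ summand on the right-hand side of the claimed inequality. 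Taking $p$-th roots and using the elementary inequality $(a+b)^{1/p}\le a^{1/p}+b^{1/p}$ (for $p\ge 1$; for $p<1$ one uses subadditivity in the other direction, with harmless constants) gives $\|h\|_{L^p(\Omega)}\le \mathrm{C}\bigl(\mathcal{L}^n(\Omega)+\sum_j \mathrm{M}^{pj}\mu_h(\eta\mathrm{M}^j)\bigr)^{1/p}$. For the lower bound I would instead bound $\mu_h(t)\ge \mu_h(\eta\mathrm{M}^j)$ for $t\in I_j$, giving
$$
p\int_0^\infty t^{p-1}\mu_h(t)\,dt \ge \sum_{j=1}^\infty \eta^p\mathrm{M}^{pj}\bigl(1-\mathrm{M}^{-p}\bigr)\mu_h(\eta\mathrm{M}^j),
$$
whence $\|h\|_{L^p(\Omega)}\ge \mathrm{C}^{-1}\bigl(\sum_j \mathrm{M}^{pj}\mu_h(\eta\mathrm{M}^j)\bigr)^{1/p}$, with $\mathrm{C}$ depending only on $n$ (implicitly through nothing here, really), $\eta$ and $\mathrm{M}$ via the factor $\eta^p(1-\mathrm{M}^{-p})$. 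The asserted equivalence $h\in L^p(\Omega)\iff\sum_j\mathrm{M}^{pj}\mu_h(\eta\mathrm{M}^j)<\infty$ is then immediate from these two inequalities, since $\mathcal{L}^n(\Omega)<\infty$ as $\Omega$ is bounded.

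I do not expect any serious obstacle here — this is a classical bookkeeping lemma (it is exactly \cite[Lemma 4.2-type]{CC} material). The only mild care points are: keeping track of the correct power of $\mathrm{M}$ lost in the re-indexing $I_j\leftrightarrow$ endpoint $\eta\mathrm{M}^{j-1}$ versus $\eta\mathrm{M}^{j}$ (so that the constant $\mathrm{C}$ genuinely depends only on $n,\eta,\mathrm{M}$ and not on $p$ in a dangerous way — in fact one should check the $p$-dependence is acceptable, or simply note $p$ ranges over a fixed interval in our applications), and handling the regime $0<p<1$ where $t\mapsto t^p$ is concave so the triangle-type inequalities for $p$-th roots flip; in that case the constants only improve. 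I would present the computation for $p\ge 1$ in detail and remark that $0<p<1$ is analogous. Finally, the finiteness of $\mathcal{L}^n(\Omega)$ guarantees the $I_0$ contribution and the additive $\mathcal{L}^n(\Omega)$ term on the right are finite, so the equivalence is clean.
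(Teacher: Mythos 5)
Your argument is correct and is precisely the standard layer-cake/dyadic-decomposition proof that the paper itself omits, deferring instead to Caffarelli--Cabr\'{e} \cite{CC}; the two-sided comparison of $p\int_0^\infty t^{p-1}\mu_h(t)\,dt$ with the geometric sums via monotonicity of $\mu_h$ on the blocks $(\eta\mathrm{M}^{j-1},\eta\mathrm{M}^j]$ is exactly the intended route. Your side remark correctly flags the only imprecision in the statement, namely that the natural constant also depends on $p$ through $\eta^p$ and $1-\mathrm{M}^{-p}$ (harmless here since $p$ is confined to a fixed range in all applications); note also that the final subadditivity step $(a+b)^{1/p}\le a^{1/p}+b^{1/p}$ is unnecessary, as one can simply take the $p$-th root of the inequality $\|h\|_{L^p(\Omega)}^p\le \mathrm{C}^p\bigl(\Leb(\Omega)+\sum_j\mathrm{M}^{pj}\mu_h(\eta\mathrm{M}^j)\bigr)$ directly.
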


Once we have such estimates for paraboloids on the boundary, the ideas will follow as the ones of Caffarelli's original proof for the interior estimates (cf. \cite{CC}). In effect, consider the distribution function of $\Theta$, i,e., $\mu_{\Theta, \Omega^{\prime}}(t) \defeq \Leb\left(\left\{x \in \Omega^{\prime} : \, \Theta(x) > t\right\}\right)$. Then, one check that $\mu_{\Theta, \Omega^{\prime}}(t) = \Leb(\mathcal{A}_t(u,\Omega^{\prime}))$. Moreover, as an application of Proposition \ref{P1} we have
$$
	\Theta(u,\Omega^{\prime}, \cdot) \in L^p(\Omega^{\prime}) \Longleftrightarrow \sum_{j=1}^{\infty} \mathrm{M}^{pj} \Leb\left(\mathcal{A}_{\eta \mathrm{M}^j(u,\Omega^{\prime})}\right) < \infty.
$$
Finally, from \cite[Proposition 1.1]{CC} we conclude
$$
   \|D^2 u\|_{L^p(\Omega^{\prime})} \le \mathrm{C}(\eta,\mathrm{M}, p) \left(\Leb(\Omega) + \sum_{j=1}^{\infty} \mathrm{M}^{pj} \Leb\left(\mathcal{A}_{\eta \mathrm{M}^j(u,\Omega^{\prime})}\right)\right).
$$
Therefore, to obtain the desired $W^{2,p}$-estimates in $\Omega^{\prime}$ it is enough to prove the corresponding summability for $\displaystyle \sum_{j=1}^{\infty} \mathrm{M}^{pj} \Leb\left(\mathcal{A}_{\eta \mathrm{M}^j(u,\Omega^{\prime})}\right)$.

We gather only a few elements involved in the proof of Theorem \ref{T1}, despite the fact that such results are well-known in the literature. We observe that such an \textit{a priori} estimate is independent of further assumptions on the operator $F$, and it follows merely from uniform ellipticity and the integrability of the source term.  Thus, the proof is omitted in what follows. See \cite[Lemma 7.8]{CC} and \cite[Lemma 2.7]{Winter} for details.

\begin{proposition}[{\bf Power Decay on the boundary}]\label{Prop2.12}
Let $u\in \mathcal{S}(\lambda,\Lambda,f)$ in $\mathrm{B}^{+}_{12\sqrt{n}}\subset \Omega\subset\mathbb{R}^{n}_{+}$, $u\in C^0(\Omega)$ and $\Vert u\Vert_{L^{\infty}(\Omega)}\leq 1$. Then, there exist universal constants $\mathrm{C}>0$ and $\mu>0$ such that $\Vert f\Vert_{L^{n}(\mathrm{B}^{+}_{12\sqrt{n}})}\le 1$ implies
$$
	\Leb\left(\mathcal{A}_t(u,\Omega) \cap \left(\mathcal{Q}^{n-1}_1 \times (0,1) + x_0\right)\right) \le \mathrm{C}.t^{-\mu}
$$
for any $x_0 \in \mathrm{B}_{9\sqrt{n}} \cap \overline{\mathbb{R}^n_+}$ and $t>1$.
\end{proposition}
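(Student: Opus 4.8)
\noindent The plan is to run, in the half-ball geometry, the dyadic Calder\'on--Zygmund iteration of \cite[Lemma 7.8]{CC}, with the boundary adjustments of \cite[Lemma 2.7]{Winter}; no new idea beyond uniform ellipticity, the A.B.P. inequality and Lemma~\ref{Cal-Zyg} is needed. \textbf{Step 0 (normalization).} It is enough to prove the bound under the extra assumption $\Vert f\Vert_{L^{n}(\mathrm{B}^{+}_{12\sqrt{n}})}\le \varepsilon_{0}$, where $\varepsilon_{0}\in(0,1]$ is the universal smallness threshold produced in Step~1. Indeed $\varepsilon_{0}u\in\mathcal{S}(\lambda,\Lambda,\varepsilon_{0}f)$ still has $L^{\infty}$--norm $\le 1$ and source of $L^{n}$--norm $\le\varepsilon_{0}$, and since opening--$\mathrm{M}$ paraboloids for $\varepsilon_{0}u$ are exactly opening--$(\mathrm{M}/\varepsilon_{0})$ paraboloids for $u$ one has $\mathcal{A}_{t}(u,\Omega)=\mathcal{A}_{\varepsilon_{0}t}(\varepsilon_{0}u,\Omega)$; thus a bound $\Leb(\mathcal{A}_{s}(\varepsilon_{0}u,\Omega)\cap\mathcal{Q}_{\ast})\le \mathrm{C}s^{-\mu}$ yields $\Leb(\mathcal{A}_{t}(u,\Omega)\cap\mathcal{Q}_{\ast})\le \mathrm{C}\varepsilon_{0}^{-\mu}t^{-\mu}$. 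Here $\mathcal{Q}_{\ast}\defeq\mathcal{Q}^{n-1}_{1}\times(0,1)+x_{0}$; after a horizontal translation we may take $x_{0}'=0$, and the subcase $(x_{0})_{n}>0$ is the purely interior one of \cite{CC}, so the effective case is $(x_{0})_{n}=0$, i.e.\ $\mathcal{Q}_{\ast}=\mathcal{Q}^{n-1}_{1}\times(0,1)$.

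\noindent \textbf{Step 1 (basic measure estimate and its propagation).} Using that $u$ is simultaneously a Pucci sub- and supersolution in $\mathrm{B}^{+}_{12\sqrt{n}}$, one shows via the A.B.P. inequality on $\mathrm{B}^{+}_{12\sqrt{n}}$ tested against an explicit barrier that there are universal $\mathrm{M}_{0}>1$, $\mu_{0}\in(0,1)$ with
$$
\Leb\!\big(\mathcal{A}_{\mathrm{M}_{0}}(u,\Omega)\cap\mathcal{Q}_{\ast}\big)\le\mu_{0}\,\Leb(\mathcal{Q}_{\ast}),
$$
and, more importantly, the scale-invariant \emph{propagation} property: for every dyadic subcube $\mathcal{Q}$ of $\mathcal{Q}_{\ast}$ and every $k\in\mathbb{N}$,
$$
\Leb\!\big(\mathcal{A}_{\mathrm{M}_{0}^{k+1}}(u,\Omega)\cap\mathcal{Q}\big)>\mu_{0}\,\Leb(\mathcal{Q})\ \Longrightarrow\ \tilde{\mathcal{Q}}\subset\mathcal{A}_{\mathrm{M}_{0}^{k}}(u,\Omega),
$$
with $\tilde{\mathcal{Q}}$ the dyadic cube whose subdivision contains $\mathcal{Q}$, in the notation of Lemma~\ref{Cal-Zyg}. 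The implication is proved by contraposition: if some $y\in\tilde{\mathcal{Q}}$ lies in $\mathcal{G}_{\mathrm{M}_{0}^{k}}(u,\Omega)$, one subtracts the corresponding touching paraboloid, dilates $\tilde{\mathcal{Q}}$ to unit size and renormalizes; the resulting function is in $\mathcal{S}(\lambda,\Lambda,\tilde f)$ with $\Vert\tilde f\Vert_{L^{n}}\le\varepsilon_{0}$, so the first displayed inequality applied to it contradicts the hypothesis. For dyadic cubes lying in the open half-space the barrier is the classical one of \cite[Lemma~4.6 and Lemma~7.7]{CC}; for cubes abutting $\mathrm{T}_{12\sqrt{n}}$ one uses instead a half-ball barrier as in \cite{Winter}, the key point being that \emph{no boundary condition} is required: $\Vert u\Vert_{L^{\infty}(\Omega)}\le1$ already controls $u$ on the flat face of $\partial\mathrm{B}^{+}_{12\sqrt{n}}$ in the A.B.P.\ inequality, and the paraboloids defining $\mathcal{G}_{t}$ are only asked to stay on one side of $u$ \emph{inside} $\Omega$.

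\noindent \textbf{Step 2 (iteration and interpolation).} Set $\mathcal{A}=\mathcal{A}_{\mathrm{M}_{0}^{k+1}}(u,\Omega)\cap\mathcal{Q}_{\ast}$, $\mathcal{B}=\mathcal{A}_{\mathrm{M}_{0}^{k}}(u,\Omega)\cap\mathcal{Q}_{\ast}$, $\delta=\mu_{0}$: then $\mathcal{A}\subset\mathcal{B}$ (larger opening gives a smaller bad set), hypothesis (a) of Lemma~\ref{Cal-Zyg} is the first display, and hypothesis (b) is the propagation property, so Lemma~\ref{Cal-Zyg} gives $\Leb(\mathcal{A}_{\mathrm{M}_{0}^{k+1}}(u,\Omega)\cap\mathcal{Q}_{\ast})\le\mu_{0}\Leb(\mathcal{A}_{\mathrm{M}_{0}^{k}}(u,\Omega)\cap\mathcal{Q}_{\ast})$. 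Iterating from the base case $\Leb(\mathcal{A}_{\mathrm{M}_{0}}(u,\Omega)\cap\mathcal{Q}_{\ast})\le\mu_{0}$ yields $\Leb(\mathcal{A}_{\mathrm{M}_{0}^{k}}(u,\Omega)\cap\mathcal{Q}_{\ast})\le\mu_{0}^{k}$ for all $k$. Finally, for $t>1$ pick $k$ with $\mathrm{M}_{0}^{k}\le t<\mathrm{M}_{0}^{k+1}$; since $\mathcal{A}_{t}(u,\Omega)\subset\mathcal{A}_{\mathrm{M}_{0}^{k}}(u,\Omega)$ and $k\ge\log_{\mathrm{M}_{0}}t-1$ we obtain $\Leb(\mathcal{A}_{t}(u,\Omega)\cap\mathcal{Q}_{\ast})\le\mu_{0}^{k}\le\mu_{0}^{-1}t^{-\mu}$ with $\mu\defeq\log_{\mathrm{M}_{0}}(1/\mu_{0})>0$, which, combined with Step~0, is the asserted estimate.

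\noindent \textbf{Main obstacle.} The only non-routine point is the boundary instance of Step~1: one must run the A.B.P./barrier argument on $\mathrm{B}^{+}_{12\sqrt{n}}$ so that $\mathrm{M}_{0},\mu_{0}$ remain universal uniformly as the reference cube $\mathcal{Q}$ approaches the flat face $\mathrm{T}_{12\sqrt{n}}$ (and as its horizontal position varies), and one must keep track in the cube decomposition of which dyadic cubes meet $\mathrm{T}_{12\sqrt{n}}$ — where the half-ball barrier of \cite{Winter} is used — and which are genuinely interior, where the classical barrier of \cite{CC} applies. The renormalization bookkeeping inside the proof of the propagation property and the passage from the dyadic levels $\mathrm{M}_{0}^{k}$ to arbitrary $t$ are entirely standard.
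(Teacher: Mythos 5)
Your architecture (normalize $f$, basic measure estimate plus a dyadic propagation property, Calder\'on--Zygmund iteration via Lemma~\ref{Cal-Zyg}) is exactly the route of \cite[Lemma 7.8]{CC} and \cite[Lemma 2.7]{Winter}, which is all the paper offers for this proposition. However, your Step~1 propagation property is stated, and its contraposition argument sketched, \emph{without} the maximal-function ingredient, and there the proof genuinely breaks. After subtracting the touching paraboloid at $x_1\in\tilde{\mathcal{Q}}\cap\mathcal{G}_{\mathrm{M}_0^{k}}(u,\Omega)$ and dilating a dyadic cube of side $2^{-i}$ to unit size, the renormalized function $\tilde u(y)=\frac{2^{2i}}{\mathrm{M}_0^{k}}(u-\ell)(z_0+2^{-i}y)$ lies in $\mathcal{S}(\lambda,\Lambda,\tilde f)$ with $\tilde f(y)=\mathrm{M}_0^{-k}f(z_0+2^{-i}y)$, so that
$$
\|\tilde f\|_{L^{n}}=\frac{2^{i}}{\mathrm{M}_0^{k}}\,\|f\|_{L^{n}\left(\mathrm{B}^{+}_{c2^{-i}}(z_0)\right)}.
$$
The prefactor $2^{i}/\mathrm{M}_0^{k}$ is unbounded over the dyadic cubes relevant at a fixed level $k$ (Lemma~\ref{Cal-Zyg} quantifies over \emph{all} dyadic subcubes, with no lower bound on their size), so the global smallness $\|f\|_{L^{n}(\mathrm{B}^{+}_{12\sqrt n})}\le\varepsilon_0$ does not yield $\|\tilde f\|_{L^{n}}\le\varepsilon_0$, and the basic measure estimate cannot be invoked. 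This is precisely why the references --- and the paper's own Lemma~\ref{lemma4.8} --- enlarge the comparison set to
$$
\mathcal{B}=\left(\mathcal{A}_{\mathrm{M}_0^{k}}(u,\Omega)\cap\mathcal{Q}_{\ast}\right)\cup\left\{x\in\mathcal{Q}_{\ast}:\ \mathcal{M}(f^{n})(x)\ge(\mathrm{C}_0\mathrm{M}_0^{k})^{n}\right\}:
$$
the contraposition then also provides $\mathcal{M}(f^{n})(x_1)<(\mathrm{C}_0\mathrm{M}_0^{k})^{n}$, whence $\|f\|_{L^{n}(\mathrm{B}_{c2^{-i}}(x_1))}\le C\mathrm{C}_0\mathrm{M}_0^{k}2^{-i}$ and $\|\tilde f\|_{L^{n}}\le C\mathrm{C}_0\le\varepsilon_0$ for $\mathrm{C}_0$ universal and small. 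The extra term is harmless in Step~2: the weak $(1,1)$ inequality gives $\Leb\{\mathcal{M}(f^{n})\ge(\mathrm{C}_0\mathrm{M}_0^{k})^{n}\}\le C\mathrm{M}_0^{-nk}$, the recursion becomes $\alpha_{k+1}\le\mu_0(\alpha_k+C\mathrm{M}_0^{-nk})$, and one still obtains $\alpha_k\le C\theta^{k}$ for any $\theta\in(\max\{\mu_0,\mathrm{M}_0^{-n}\},1)$, hence the asserted decay with $\mu$ essentially $\min\{\log_{\mathrm{M}_0}(1/\mu_0),\,n\}$ rather than your $\log_{\mathrm{M}_0}(1/\mu_0)$. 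Without this correction, the clean inequality $\Leb(\mathcal{A})\le\mu_0\,\Leb(\mathcal{A}_{\mathrm{M}_0^{k}}(u,\Omega)\cap\mathcal{Q}_{\ast})$ in your Step~2 is unjustified.

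A secondary, fixable inaccuracy: in Step~0 you dismiss the case $(x_0)_n>0$ as ``purely interior''. It is not, since the translated cube may sit at arbitrarily small positive distance from $\mathrm{T}_{12\sqrt n}$, where no ball of universal radius comparable to the cube fits inside $\mathrm{B}^{+}_{12\sqrt n}$ and the interior lemma of \cite{CC} does not apply with universal constants. The correct dichotomy --- which your closing paragraph implicitly acknowledges --- is by the ratio of the distance of each dyadic cube to the flat face over its side length: the half-ball barrier of \cite{Winter} is used whenever that ratio is bounded, the interior barrier otherwise.
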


Proposition \ref{Prop2.12} yields key information on the measure of $\mathcal{G}_{\mathrm{M}}(u,\Omega) \cap  \left((\mathcal{Q}^{n-1}_1 \times (0,1)) + x_0\right)$, provided
$$
\mathcal{G}_1(u,\Omega) \cap \left((\mathcal{Q}^{n-1}_2 \times (0,2)) + x_0\right)\not= \emptyset.
$$

The proof of Theorem \ref{T1} will be split into two main steps: Firstly, we will investigate equations governed by operators without dependence on the function or gradient entry. In the sequel, we reduce the analysis for operators with full dependence. The first step towards such estimates is the following Proposition.

\begin{proposition}\label{T-flat}
	Let $u$ be a normalized viscosity solution of
	\begin{equation*} \label{mens}
		\left\{
		\begin{array}{rclcl}
			F(D^2u, x) &=& f(x) & \mbox{in} & \mathrm{B}^+_1,\\
			\mathcal{B}(x,u,Du)&=& g(x) & \mbox{on} & \mathrm{T}_1,
		\end{array}
		\right.
	\end{equation*}
	where $\beta,\gamma, g \in C^{1,\alpha}(\mathrm{T}_1)$ with $\beta \cdot \overrightarrow{\textbf{n}} \ge \mu_0$ for some $\mu_0 >0$, $\gamma \le 0$ and $f \in L^p(\mathrm{B}^+_1)\cap C^{0}(\mathrm{B}^{+}_{1})$, for $n\le  p < \infty$. Further, assume that assumptions (A1)-(A4) are in force. Then, there exist positive constants $\psi_{0}$ and $r_0$ depending on $n$, $\lambda$, $\Lambda$, $p$, $\mu_0$, $\alpha$, $\|\beta\|_{C^{1,\alpha}(\overline{\mathrm{T}_{1}})}$ and $\|\gamma\|_{C^{1,\alpha}(\overline{\mathrm{T}_{1}})}$ such that if
	$$
	\left(\intav{\mathrm{B}_r(x_0) \cap \mathrm{B}^+_1} \psi_{F^{\sharp}}(x, x_0)^p dx\right)^{\frac{1}{p}} \le \psi_0
	$$
	for any $x_0 \in \mathrm{B}^+_1$ and $r \in (0, r_0)$, then $u \in W^{2, p}\left(\overline{\mathrm{B}^+_{\frac{1}{2}}}\right)$ and
$$
		\|u\|_{W^{2, p}\left(\mathrm{B}^+_{\frac{1}{2}}\right)} \le \mathrm{C} \cdot \left( \|u\|_{L^{\infty}(\mathrm{B}^+_1)} + \|f\|_{L^p(\mathrm{B}^+_1)}+\Vert g\Vert_{C^{1,\alpha}(\overline{\mathrm{T}_{1}})}\right),
$$
	where $\mathrm{C}=\mathrm{C}(n,\lambda,\Lambda,\mu_0,p,
	\Vert \gamma\Vert_{C^{1,\alpha}(\overline{\mathrm{T}_{1}})}, \alpha,r_{0})>0$.
\end{proposition}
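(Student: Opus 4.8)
The plan is to run Caffarelli's geometric (``tangential'') iteration for $W^{2,p}$ estimates, using the frozen–coefficient recession equation $F^{\sharp}(D^2\mathfrak h,x_0)=0$ as the tangent profile and the Approximation Lemma \ref{Approx} as the compactness device. First I would reduce to a normalized configuration: since the desired bound is linear in $(\|u\|_{L^\infty},\|f\|_{L^p},\|g\|_{C^{1,\alpha}})$ and replacing $u,f,g$ by $s^{-1}u,s^{-1}f,s^{-1}g$ turns the equation into one governed by the $(\lambda,\Lambda)$–elliptic operator $\mathrm{X}\mapsto s^{-1}F(s\mathrm{X},x)$, whose recession is again $F^{\sharp}$ and which still satisfies (A1)--(A4) with the same constants, I may assume $\|u\|_{L^\infty(\mathrm{B}^+_1)}+\|f\|_{L^p(\mathrm{B}^+_1)}+\|g\|_{C^{1,\alpha}(\overline{\mathrm{T}_1})}\le\delta_\star$ for a small parameter $\delta_\star$ to be fixed later. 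Then I pass to $w\defeq K^{-1}u$, which solves $F_{1/K}(D^2 w,x)=K^{-1}f(x)$ with $F_{1/K}(\mathrm{X},x)=K^{-1}F(K\mathrm{X},x)$; since $F_{1/K}\to F^{\sharp}$ locally uniformly on $\textrm{Sym}(n)$ as $K\to\infty$, a sufficiently large (universal, $p$–dependent) $K$ places us exactly in the hypotheses of Lemma \ref{Approx}. As $D^2u=K\,D^2w$ with $K$ fixed, an estimate for $w$ yields the one claimed for $u$.

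\textbf{Base decay step.} For a small $\hat\delta>0$ (to be chosen in terms of $p$ only at the very end) Lemma \ref{Approx} furnishes $\epsilon,\tau_0>0$; choosing $\psi_0\le\epsilon$ and $\tau=1/K\le\tau_0$, it produces a function $\mathfrak h$ with $F^{\sharp}(D^2\mathfrak h,0)=0$ in $\mathrm{B}^+_{7/8}$, carrying the same oblique datum on $\mathrm{T}_{7/8}$, and $\|w-\mathfrak h\|_{L^\infty(\mathrm{B}^+_{7/8})}\le\hat\delta$. Hypothesis (A4) then yields $\|\mathfrak h\|_{C^{1,1}(\overline{\mathrm{B}^+_{1/2}})}\le C_1(\|\mathfrak h\|_{L^\infty}+\|g\|_{C^{1,\alpha}})\le\bar M$ for a universal $\bar M$, so $\mathfrak h$ is touched, from above and from below, at every point of $\overline{\mathrm{B}^+_{1/2}}$ by paraboloids of the universal opening $\bar M$. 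Combining this with $\|w-\mathfrak h\|_{L^\infty}\le\hat\delta$, the A.B.P. estimate (Lemma \ref{ABP-fullversion}), and the boundary power decay of Proposition \ref{Prop2.12} applied to $w-\mathfrak h$ (which lies in a Pucci class with right-hand side of size $\lesssim\|f\|_{L^p}+\psi_0$), I expect to extract universal constants $M_1>1$ and $\sigma_1\in(0,1)$ with $\sigma_1\to0$ as $\hat\delta\to0^+$ such that
$$
\mathcal L^n\!\big(\mathcal A_{M_1}(w,\mathrm{B}^+_{1/2})\cap Q\big)\le\sigma_1\,\mathcal L^n(Q)
$$
for the boundary half–cubes $Q$ appearing in the decomposition below (the boundary counterpart of the measure estimate in \cite[Chapter 7]{CC}).

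\textbf{Iteration and conclusion.} Next I iterate the base step along dyadic boundary half–cubes $\mathcal Q^{n-1}_{2^{-k}}(x_0')\times(0,2^{-k})$ and interior dyadic cubes, observing that the rescaled problems still obey (A1)--(A4) with the same constants: the coefficient–oscillation hypothesis is scale invariant by (A3), the data $\beta,\gamma,g\in C^{1,\alpha}$ only improve under zooming in, and $\beta\cdot\overrightarrow{\textbf{n}}\ge\mu_0$ is preserved. Feeding the base estimate into the Calder\'on--Zygmund cube decomposition (Lemma \ref{Cal-Zyg}) gives $\mathcal L^n\big(\mathcal A_{M_1^{k}}(w,\mathrm{B}^+_{1/2})\big)\le C\sigma_1^{k}$ for all $k\in\N$. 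It remains to fix $\hat\delta$ (hence $\psi_0$ and $r_0$) so small that $\sigma_1<M_1^{-p}$; then $\sum_k M_1^{pk}\,\mathcal L^n\big(\mathcal A_{M_1^{k}}(w,\mathrm{B}^+_{1/2})\big)<\infty$, and by Proposition \ref{P1}, the identity $\mu_{\Theta,\mathrm{B}^+_{1/2}}(t)=\mathcal L^n(\mathcal A_t(w,\mathrm{B}^+_{1/2}))$, and \cite[Proposition 1.1]{CC} one obtains $D^2w\in L^p(\mathrm{B}^+_{1/2})$ with $\|D^2w\|_{L^p(\mathrm{B}^+_{1/2})}\le C$. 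Undoing the two normalizations produces $\|u\|_{W^{2,p}(\mathrm{B}^+_{1/2})}\le\mathrm C\big(\|u\|_{L^\infty(\mathrm{B}^+_1)}+\|f\|_{L^p(\mathrm{B}^+_1)}+\|g\|_{C^{1,\alpha}(\overline{\mathrm{T}_1})}\big)$; the lower–order part of the $W^{2,p}$ norm is absorbed using the $C^{0,\alpha'}$ bound of Theorem \ref{Holder_Est} and interpolation.

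\textbf{Main obstacle.} The delicate part is the boundary bookkeeping. I must set up the dyadic decomposition with \emph{half–cubes} adapted to $\mathrm{T}_1$ (so that Proposition \ref{Prop2.12} applies at each scale), verify that zooming in on any such dyadic half–cube preserves (A1)--(A4)---in particular the up–to–$\mathrm{T}_1$ $C^{1,1}$ estimate for $F^{\sharp}$ and the condition $\beta\cdot\overrightarrow{\textbf{n}}\ge\mu_0$---with scale–independent constants, and carefully track how the genuinely inhomogeneous oblique datum $g$ feeds each rescaled base step, since this is precisely where the $\|g\|_{C^{1,\alpha}}$–term on the right–hand side is generated. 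Finally, the ``frozen–coefficient'' replacement of $F$ by $F^{\sharp}(\cdot,x_0)$ inside the Approximation Lemma forces the coefficient–oscillation threshold $\psi_0$ and the radius $r_0$ to be chosen small depending on the target exponent $p$, which is why both appear with a $p$–dependence in the statement.
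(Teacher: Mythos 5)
Your overall architecture --- normalize, pass to $F_{\tau}$ with $\tau$ small so that the Approximation Lemma \ref{Approx} applies, extract a base measure--decay step from the $C^{1,1}$ bound on the recession profile together with Proposition \ref{Prop2.12}, iterate via the Calder\'on--Zygmund decomposition, and sum with Proposition \ref{P1} --- is exactly the paper's route, which runs through Proposition \ref{Prop.4.6} and Lemma \ref{lemma4.8}. However, your iteration step contains a genuine gap: you assert the pure geometric decay $\Leb\big(\mathcal{A}_{M_1^{k}}(w,\mathrm{B}^+_{1/2})\big)\le C\sigma_1^{k}$. This cannot hold for a general $f\in L^{p}$ with $p<\infty$. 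When you zoom into a dyadic half--cube of side $2^{-i}$ in order to rerun the base step at opening $M_1^{k+1}$, the rescaled right--hand side has $L^{n}$--norm of order $\tfrac{2^{i}}{M_1^{k}}\|f\|_{L^{n}}$ over the blown--up cube, and this falls below the smallness threshold $\epsilon$ of Proposition \ref{Prop.4.6} only at points where $\mathcal{M}(f^{n})<(\mathrm{C}_0 M_1^{k})^{n}$. The correct recursion is therefore $\alpha_{k+1}\le\sigma_1(\alpha_k+\beta_k)$, where $\beta_k$ is the measure of the super--level set $\{\mathcal{M}(f^{n})\ge(\mathrm{C}_0 M_1^{k})^{n}\}$ --- precisely the extra piece of the set $\mathcal{B}$ in the paper's Lemma \ref{lemma4.8} --- and the needed summability $\sum_k M_1^{pk}\beta_k<\infty$ is then supplied by the strong $(p/n,p/n)$ boundedness of the Hardy--Littlewood maximal operator combined with Proposition \ref{P1}.

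As written, your iteration never lets $\|f\|_{L^{p}}$ (as opposed to merely the smallness of $\|f\|_{L^{n}}$ achieved by normalization) enter the decay of the bad sets, so the $L^{p}$--integrability of $D^{2}u$ would come out of thin air; note also that the contrapositive form of the Calder\'on--Zygmund step requires, besides the maximal--function bound, the touching condition $\mathcal{G}_{M_1^{k}}\cap\tilde{\mathcal{Q}}\neq\emptyset$ on the predecessor cube, which is what licenses applying the base step after rescaling. With the maximal--function term restored and that hypothesis tracked, the remainder of your argument (the choice $\sigma_1<M_1^{-p}$, the summation, undoing the two normalizations, and the covering with the interior estimate of \cite{PT}) coincides with the paper's proof.
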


In order to prove Proposition \ref{T-flat} we need a number of auxiliary results:

\begin{proposition}\label{Prop.4.6}
Assume that assumptions (A1)-(A4) there hold. Let $\mathrm{B}^{+}_{14 \sqrt{n}} \subset \Omega \subset \mathbb{R}^n_+$ and $u$ be a viscosity solution of
$$
\left\{
\begin{array}{rclcl}
 F_{\tau}(D^2u,x) &=& f(x) & \mbox{in} & \mathrm{B}^+_{14\sqrt{n}},\\
 \mathcal{B}(x,u,Du)&=& g(x) & \mbox{on} & \mathrm{T}_{14 \sqrt{n}}.
\end{array}
\right.
$$
Assume further that $\max \left\{ \|f\|_{L^n(\mathrm{B}^+_{14\sqrt{n}})}, \,\,\tau \right\} \le \epsilon$. Finally, suppose for some $\tilde{x}_0 \in \mathrm{B}_{9\sqrt{n}} \cap \{x_n\ge0\}$ the following
$$
\mathcal{G}_1(u,\Omega) \cap \left((\mathcal{Q}^{n-1}_2 \times (0,2)) + \tilde{x}_0\right) \not= \emptyset.
$$
Then,
$$
	\Leb\left(\mathcal{G}_{\mathrm{M}}(u;\Omega) \cap  \left((\mathcal{Q}^{n-1}_1 \times (0,1)) + x_0\right)\right) \ge 1-\epsilon_0,
$$
where $x_0 \in \mathrm{B}_{9 \sqrt{n}} \cap \{x_n \ge 0\}$, $\mathrm{M}>1$ depends only on $n$, $\lambda$, $\Lambda$, $\mu_0$, $\alpha$, $\|\gamma\|_{C^{1,\alpha}(\overline{\mathrm{T}}_{14 \sqrt{n}})}$, $\mathrm{C}_{1}$ (from Assumption A4) and $\|g\|_{C^{1,\alpha}(\mathrm{T}_{14 \sqrt{n}})}$, and $\epsilon_0 \in (0,1)$.
\end{proposition}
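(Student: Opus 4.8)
The plan is to combine the Approximation Lemma \ref{Approx} with the $C^{1,1}$ \textit{a priori} estimate of assumption (A4) in order to trap $u$ between two paraboloids of controlled opening on a small boundary cube, and then to rescale so that the conclusion holds on the unit-size cube. First I would apply Lemma \ref{Approx} with $\nu$ chosen so that the half-ball geometry matches the portion of $\partial\Omega$ passing through $\tilde x_0$: since $\|u\|_{L^\infty}\le 1$ and $f$ is small in $L^n$ (hence, after the normalization already present, small in $L^p$ on the relevant ball), and since $\tau\le\epsilon$ controls $|F_\tau-F^\sharp|$, we obtain for any prescribed $\hat\delta>0$ a viscosity solution $\mathfrak h$ of $F^\sharp(D^2\mathfrak h,0)=0$ with the oblique datum $g$ such that $\|u-\mathfrak h\|_{L^\infty(\mathrm B^+_{7/8}(0',\nu))}\le\hat\delta$. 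The smallness thresholds $\epsilon,\tau_0$ are exactly those furnished by the lemma, and the normalizing constants $\mathfrak c_1,\mathfrak c_2$ are controlled by $1$ and by $\|g\|_{C^{1,\alpha}}$ respectively.

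Next I would invoke assumption (A4): $\mathfrak h\in C^{1,1}(\mathrm B^+_{1/2})$ with $\|\mathfrak h\|_{C^{1,1}}\le \mathrm C_1(\|\mathfrak h\|_{L^\infty}+\|g\|_{C^{1,\alpha}})$. Because $\|\mathfrak h\|_{L^\infty}\le \|u\|_{L^\infty}+\hat\delta\le 2$, the opening of $\mathfrak h$ at every point of $\mathrm B^+_{1/4}$, say, is bounded by a universal constant $\mathrm N$ depending only on $n,\lambda,\Lambda,\mu_0,\alpha,\|\gamma\|_{C^{1,\alpha}},\mathrm C_1$ and $\|g\|_{C^{1,\alpha}}$. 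Thus at each such point $\mathfrak h$ is touched from above and below by paraboloids of opening $\mathrm N$; adding the error $\hat\delta$ from the $L^\infty$ approximation (and absorbing it by slightly enlarging the opening and translating the paraboloid vertex) shows that $u$ is touched from both sides by paraboloids of opening $\mathrm M:=2\mathrm N$, i.e. $x\in\mathcal G_{\mathrm M}(u,\Omega)$, for \emph{every} point of a fixed interior-in-cube region. This is the standard ``$\mathfrak h$ is $C^{1,1}$ $\Rightarrow$ $u$ is in the good set'' mechanism from Caffarelli's theory, here carried out up to the oblique boundary.

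The remaining work is bookkeeping of domains and scales. I would cover the cube $(\mathcal Q^{n-1}_1\times(0,1))+x_0$ (for $x_0\in\mathrm B_{9\sqrt n}\cap\{x_n\ge0\}$) by finitely many balls $\mathrm B^+_{7/8}(0',\nu)$, each centered appropriately on or near the flat boundary; on each such ball the argument above places all but an $\epsilon_0/(\text{number of balls})$-measure of points into $\mathcal G_{\mathrm M}(u,\Omega)$, where the exceptional set comes only from the outer boundary layer $\partial\mathrm B^+_{7/8}$ where the $L^\infty$ closeness degrades (this is where the hypothesis $\mathcal G_1(u,\Omega)\cap((\mathcal Q^{n-1}_2\times(0,2))+\tilde x_0)\ne\emptyset$ is used: it pins $u$ near a paraboloid of opening $1$ on the larger cube, so $\|u\|_{L^\infty}\le 1$ propagates and the approximation is genuinely close). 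Summing the measures gives $\Leb(\mathcal G_{\mathrm M}(u,\Omega)\cap((\mathcal Q^{n-1}_1\times(0,1))+x_0))\ge 1-\epsilon_0$, with $\mathrm M$ and $\epsilon_0$ depending only on the asserted parameters.

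The main obstacle I anticipate is the interplay between the floating center $(0',\nu)$ in Lemma \ref{Approx} and the fixed Cartesian cubes used in the Calder\'on--Zygmund machinery: one must verify that, as $x_0$ ranges over $\mathrm B_{9\sqrt n}\cap\{x_n\ge0\}$, the required half-balls can always be fitted with a uniformly bounded overlap and that the constant $\mathrm C_1$-type estimate is genuinely invariant under the rescalings $u(x)\mapsto u(rx)$ (which change $\|f\|_{L^p}$ by a factor $r^{2-n/p}\le 1$ since $p\ge n$, hence do not spoil the smallness). Handling the corner-type region $\{x_n=0\}\cap\partial\mathcal Q$ and ensuring the exceptional set is controlled purely by $\epsilon_0$ — rather than by $\hat\delta$, which must be chosen \emph{after} $\mathrm M$ via the opening-enlargement step — is the delicate point; once the order of quantifiers ($\mathrm M$ first, then $\hat\delta=\hat\delta(\mathrm M,\epsilon_0)$, then $\epsilon,\tau_0$ from Lemma \ref{Approx}) is fixed correctly, the estimate follows.
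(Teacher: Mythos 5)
There is a genuine gap at the heart of your argument: the step where you pass from $\|u-\mathfrak h\|_{L^\infty}\le\hat\delta$ and $\mathfrak h\in C^{1,1}$ to ``$u$ is touched from both sides by paraboloids of opening $\mathrm M=2\mathrm N$ at \emph{every} point'' is false. Mere $L^\infty$-closeness to a $C^{1,1}$ function does not produce contact points: $u$ may oscillate arbitrarily wildly inside the $\hat\delta$-band around $\mathfrak h$, in which case no paraboloid of bounded opening touches $u$ at most points, no matter how you ``enlarge the opening and translate the vertex''. (If your mechanism worked it would prove the everywhere statement $\Leb(\mathcal A_{\mathrm M})=0$, which is stronger than the claim and certainly not true.) The paper's proof supplies the missing ingredient: the difference $w=(v-\mathfrak h)/(2\mathrm C\hat\delta)$ lies in the Pucci class $\mathcal S(\lambda/n,\Lambda,0)$ with oblique boundary data, so the power-decay estimate of Proposition \ref{Prop2.12} applies to $w$ and gives $\Leb(\mathcal A_t(w,\Omega)\cap\text{cube})\le \mathrm C t^{-\mu}$; undoing the normalization, the set where $v-\mathfrak h$ fails to admit opening-$\mathrm M_0$ paraboloids has measure $\lesssim(\hat\delta/\mathrm M_0)^{\mu}$, and the union bound $\mathcal A_{2\mathrm M_0}(v)\subset\mathcal A_{\mathrm M_0}(w)\cup\mathcal A_{\mathrm M_0}(\mathfrak h)$ yields the $1-\epsilon_0$ measure bound. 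This PDE-theoretic control of the \emph{difference} is what converts $L^\infty$-closeness into a good-set estimate, and it is entirely absent from your proposal.

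Two further points where your setup diverges from what is actually needed. First, the sets $\mathcal G_{\mathrm M}(\cdot,\Omega)$ require the touching paraboloids to stay above/below on \emph{all} of $\Omega$, not just on a half-ball; the paper therefore subtracts the affine function $\ell$ coming from the hypothesis $\mathcal G_1(u,\Omega)\cap((\mathcal Q^{n-1}_2\times(0,2))+\tilde x_0)\neq\emptyset$ (which gives the global bound $|u-\ell|\le\frac12|x-x_1|^2$ on $\Omega$), renormalizes $v=(u-\ell)/\mathrm C_*$, and then extends $\mathfrak h$ outside $\mathrm B^+_{13\sqrt n}$ so that $\mathfrak h=v$ there and $\mathfrak h$ has quadratic growth; only then does $(A4)$ give $(\mathcal Q^{n-1}_1\times(0,1))+x_0\subset\mathcal G_{\mathrm M_0}(\mathfrak h,\Omega)$ globally. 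Your proposal uses the hypothesis only as an $L^\infty$ normalization and never subtracts $\ell$, so the global side of the paraboloid condition is not addressed. Second, the covering of the unit cube by many half-balls with an $\epsilon_0/(\#\text{balls})$ budget per ball is unnecessary and would be hard to make rigorous without the measure estimate above; the paper works once at the fixed scales $14\sqrt n,13\sqrt n,12\sqrt n$.
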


\begin{proof}
Consider $x_1 \in \mathcal{G}_1(u,\Omega) \cap  \left(\mathcal{Q}^{n-1}_2 \times (0,2) + \tilde{x}_0\right) $. Then, there exist paraboloids with an opening $t=1$ touching $u$ at $x_1$ from above and below, this is,
$$
-\frac{1}{2}|x-x_1|^2 \le u(x)-\ell(x) \le \frac{1}{2}|x-x_1|^2
$$
for $x \in \Omega$ and an affine function $\ell$.

Now, we set
$$
v(x) = \frac{u(x)-\ell(x)}{\mathrm{C}_{\ast}},
$$
 where $\mathrm{C}_{\ast}>0$ is a dimensional constant selected (large enough) so that $ \|v\|_{L^{\infty}(\mathrm{B}^+_{12\sqrt{n}})} \le 1$ and
$$
-|x|^2 \le v(x) \le |x|^2 \quad \text{in} \quad \Omega \setminus \mathrm{B}^{+}_{12\sqrt{n}}.
$$
Next, we observe that $v$ is a viscosity solution to
$$
\left\{
\begin{array}{rclcl}
	\tilde{F}_{\tau}(D^2v,x) &=& \tilde{f}(x)& \mbox{in} & \mathrm{B}^+_{14\sqrt{n}},\\
	 \mathcal{B}(x,v,Dv)&=&\frac{1}{\mathrm{C}_{\ast}}[g(x)-\beta \cdot D\ell-\gamma\ell]& \mbox{on} & \mathrm{T}_{14 \sqrt{n}}.
\end{array}
\right.
$$
where
$$
	\tilde{F}_{\tau}(\mathrm{X},x) \defeq \frac{1}{\mathrm{C}_{\ast}}F_{\tau}(\mathrm{C}_{\ast} \mathrm{X}, x)  \quad \text{and} \quad \tilde{f}(x) \defeq \frac{1}{\mathrm{C}_{\ast}} f(x)
$$
Now, we consider $\mathfrak{h}$ the function $\epsilon$-close to $u$, coming from Lemma \ref{Approx}, this is, let $\mathfrak{h} \in C^{1,1}(\mathrm{B}^+_{13\sqrt{n}}) \cap C^0(\overline{\mathrm{B}^{+}_{13\sqrt{n}}})$ be solution of
$$
\left\{
\begin{array}{rclcl}
	\tilde{F}^{\sharp}(D^2 \mathfrak{h},0) &=& 0 & \mbox{in} & \mathrm{B}^+_{13\sqrt{n}},\\
	 \mathcal{B}(x,\mathfrak{h}, D\mathfrak{h}) &=&\frac{1}{\mathrm{C}_{\ast}}[g(x)-\beta(x) \cdot D\ell(x)-\gamma(x)\ell(x)]& \mbox{on} & \mathrm{T}_{13\sqrt{n}}.
\end{array}
\right.
$$
such that
$$
\|v-\mathfrak{h}\|_{L^{\infty}(\mathrm{B}^+_{13\sqrt{n}})} \le \hat{\delta}<1
$$
Notice that $\beta \cdot D \ell \in C^{1,\alpha}(\overline{\mathrm{T}}_{14 \sqrt{n}})$ since $\beta \in C^{1,\alpha}(\overline{T}_{14\sqrt{n}})$ and $D \ell$ is a constant vector field. Hence, the A.B.P. Maximum Principle (Lemma \ref{ABP-fullversion}) assures that
\begin{eqnarray*}
\|\mathfrak{h}\|_{L^{\infty}(\mathrm{B}^+_{13\sqrt{n}})} &\le& \| v\|_{L^{\infty}(\partial \mathrm{B}^{+}_{13\sqrt{n}}\setminus \mathrm{T}_{13\sqrt{n}})}+\frac{\mathrm{C}}{\mathrm{C}_{\ast}}\left[\|g\|_{L^{\infty}(\overline{\mathrm{T}}_{13\sqrt{n}})}+|D\ell|\Vert \beta\Vert_{L^{\infty}(\overline{\mathrm{T}}_{14\sqrt{n}})}+\Vert \gamma\ell\Vert_{L^{\infty}(\overline{\mathrm{T}}_{13\sqrt{n}})}\right]\\
&\le& \mathrm{C}(n,\Vert\ell\Vert_{L^{\infty}(\overline{\mathrm{T}}_{14\sqrt{n}})},\Vert \gamma\Vert_{C^{1,\alpha}(\mathrm{T}_{14\sqrt{n}})},\|g\|_{C^{1,\alpha}(\overline{\mathrm{T}}_{14\sqrt{n}})})\\
&\defeq &\widetilde{\mathrm{C}}
\end{eqnarray*}
In consequence, condition (A4) ensures that
$$
\|\mathfrak{h}\|_{C^{1,1}(\overline{\mathrm{B}^+}_{12\sqrt{n}})} \leq \mathrm{C}(\mathrm{C}_{1},\widetilde{\mathrm{C}})\Longrightarrow
\mathcal{A}_{\mathrm{N}}\left(\mathfrak{h},\mathrm{B}^{+}_{12\sqrt{n}}\right) \cap \left((\mathcal{Q}^{n-1}_1 \times (0,1)) + x_0\right)=\emptyset
$$
for some $\mathrm{N}=\mathrm{N}(\mathrm{C}_{1},\widetilde{C})>1$, and we extended $\mathfrak{h} \Big|_{\mathrm{B}^+_{12\sqrt{n}}}$ (continuously) outside $\mathrm{B}^{+}_{12\sqrt{n}}$ such that $\mathfrak{h}=v$ outside $\mathrm{B}^+_{13\sqrt{n}}$ and $\| v-\mathfrak{h}\|_{L^{\infty}(\Omega)} = \|v-\mathfrak{h}\|_{L^{\infty}(\mathrm{B}^+_{12\sqrt{n}})}$. For this reason,
$$
   \|v-\mathfrak{h}\|_{L^{\infty}(\Omega)} \le \overline{\mathrm{C}}(\mathrm{C}_{1},\widetilde{\mathrm{C}})
$$
 and 	
$$
-(\overline{\mathrm{C}}(\mathrm{C}_{1},\widetilde{\mathrm{C}})+|x|^2) \le \mathfrak{h}(x) \le \overline{\mathrm{C}}(\mathrm{C}_{1},\widetilde{\mathrm{C}})+|x|^2 \quad \text{in} \quad  \Omega \setminus \mathrm{B}^+_{12\sqrt{n}}.
$$
 Therefore, there exists $\mathrm{M}_0 = \mathrm{M}_0(\mathrm{C}_{1},\tilde{\mathrm{C}})\ge \mathrm{N}>1$, for which
 $$
 \mathcal{A}_{\mathrm{M}_0}(\mathfrak{h},\Omega) \cap \left((\mathcal{Q}^{n-1}_1 \times (0,1)) + x_0\right)=\emptyset.
 $$
Summarizing,
\begin{equation} \label{Sub}
	\left( \mathcal{Q}^{n-1}_1 \times (0,1) + x_0 \right) \subset \mathcal{G}_{\mathrm{M}_0}(\mathfrak{h},\Omega).
\end{equation}
Now, we define
$$
w(x) \defeq \frac{1}{2\mathrm{C}\hat{\delta}}(v-\mathfrak{h})(x).
$$

Therefore, $w$ fulfills the assumptions of Proposition \ref{Prop2.12}. Thus, we obtain for $t>1$ the following
$$
\Leb\left(\mathcal{A}_t(w,\Omega) \cap \left((\mathcal{Q}^{n-1}_1 \times (0,1)) + x_0\right)\right) \le \mathrm{C}t^{-\mu} \quad (\text{for a} \,\,\,\mu \,\,\text{universal}).
$$
By using $\mathcal{A}_{2\mathrm{M}_0}(u) \subset \mathcal{A}_{\mathrm{M}_0}(w) \cup \mathcal{A}_{\mathrm{M}_0}(\mathfrak{h})$ and \eqref{Sub} we conclude that
$$
\Leb\left(\mathcal{G}_{2\mathrm{M}_0}(v-\mathfrak{h},\Omega) \cap \left((\mathcal{Q}^{n-1}_1 \times (0,1)) + x_0\right)\right) \ge 1-\mathrm{C} \epsilon^{-\mu}.
$$
Finally, we conclude that
$$
\Leb\left(\mathcal{G}_{2\mathrm{M}_0}(v,\Omega) \cap \left((\mathcal{Q}^{n-1}_1 \times (0,1)) + x_0\right)\right) \ge 1-\mathrm{C} \epsilon^{-\mu}.
$$
 The proof is completed by choosing $\epsilon\ll 1$ in an appropriate manner and by setting $\mathrm{M} \equiv 2\mathrm{M}_0$.
\end{proof}

\begin{lemma} \label{lemma4.8}
Given $\epsilon_0  \in (0, 1)$ and let $u$ be a normalized viscosity solution to
$$
\left\{
\begin{array}{rclcl}
 F_{\tau}(D^2u,x) &=& f(x) & \mbox{in} & \mathrm{B}^+_{14\sqrt{n}},\\
 \mathcal{B}(x,u,Du)&=& g(x) & \mbox{on} & \mathrm{T}_{14 \sqrt{n}}.
\end{array}
\right.
$$
Assume that (A1)-(A4) hold true and that $f$ is extended by zero outside $\mathrm{B}^+_{14\sqrt{n}}$. For $x \in \mathrm{B}_{14 \sqrt{n}}$, let
$$
	\max \left\{\tau, \|f\|_{L^n(\mathrm{B}_{14 \sqrt{n}})} \right\} \le \epsilon
$$
for some $\epsilon >0$ depending only on $n, \epsilon_0, \lambda, \Lambda, \mu_0, \alpha$. Then, for $k \in \mathbb{N}\setminus \{0\}$ we define
\begin{eqnarray*}
	\mathcal{A} & \defeq & \mathcal{A}_{\mathrm{M}^{k+1}}(u, \mathrm{B}^+_{14 \sqrt{n}}) \cap \left(\mathcal{Q}^{n-1}_1 \times (0,1)\right)\\
	\mathcal{B} &\defeq & \left(\mathcal{A}_{\mathrm{M}^k}(u, \mathrm{B}^+_{14\sqrt{n}}) \cap \left(Q^{n-1}_1 \times (0,1)\right)\right)\cup \left\{x \in \mathcal{Q}^{n-1}_1 \times (0,1); \mathcal{M}(f^n) \ge (\mathrm{C}_0\mathrm{M}^k)^n \right\},
\end{eqnarray*}
where $\mathrm{M} = \mathrm{M}(n, \mathrm{C}_0)>1$. Then,
$$
\Leb(\mathcal{A}) \le \epsilon_0(n, \epsilon, \lambda, \Lambda)\Leb(\mathcal{B}).
$$
\end{lemma}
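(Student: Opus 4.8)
The plan is to run Caffarelli's Calder\'on--Zygmund iteration across the flat boundary: deduce $\mathcal{L}^n(\mathcal{A})\le\epsilon_0\,\mathcal{L}^n(\mathcal{B})$ from Lemma~\ref{Cal-Zyg}, applied with $\delta=\epsilon_0$ to the nesting $\mathcal{A}\subset\mathcal{B}\subset\mathcal{Q}^{n-1}_1\times(0,1)$, so it suffices to verify its hypotheses $(\mathbf{a})$ and $(\mathbf{b})$. For $(\mathbf{a})$: since $k\ge1$ and $t\mapsto\mathcal{A}_t(u,\cdot)$ is non-increasing, $\mathcal{A}\subset\mathcal{A}_{\mathrm{M}}(u,\mathrm{B}^+_{14\sqrt{n}})\cap(\mathcal{Q}^{n-1}_1\times(0,1))$; moreover $F_\tau$ is $(\lambda,\Lambda,0,0)$-elliptic (the gradient and zeroth--order slots of $F$ are frozen at $0$), hence $u\in\mathcal{S}(\lambda,\Lambda,f)$, and with $\|f\|_{L^n(\mathrm{B}^+_{12\sqrt{n}})}\le\epsilon\le1$ Proposition~\ref{Prop2.12} ($x_0=0$, $t=\mathrm{M}$) gives $\mathcal{L}^n(\mathcal{A}_{\mathrm{M}}(u,\mathrm{B}^+_{14\sqrt{n}})\cap(\mathcal{Q}^{n-1}_1\times(0,1)))\le\mathrm{C}\,\mathrm{M}^{-\mu}$. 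One then fixes $\mathrm{M}$ large (depending on $n$, $\mathrm{C}_0$ and the remaining universal parameters, hence on $\epsilon_0$) so that simultaneously $\mathrm{C}\,\mathrm{M}^{-\mu}\le\epsilon_0$ and $\mathrm{M}\ge\mathrm{M}_\ast$, where $\mathrm{M}_\ast>1$ is the constant furnished by Proposition~\ref{Prop.4.6}; this settles $(\mathbf{a})$.

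The heart is $(\mathbf{b})$, which I would prove by contraposition: given a dyadic cube $\mathcal{Q}$ with parent $\tilde{\mathcal{Q}}$, show that $\tilde{\mathcal{Q}}\not\subset\mathcal{B}$ forces $\mathcal{L}^n(\mathcal{A}\cap\mathcal{Q})\le\epsilon_0\,\mathcal{L}^n(\mathcal{Q})$. Fix $\bar{x}\in\tilde{\mathcal{Q}}\setminus\mathcal{B}$, so $\bar{x}\in\mathcal{G}_{\mathrm{M}^k}(u,\mathrm{B}^+_{14\sqrt{n}})$ and $\mathcal{M}(f^n)(\bar{x})<(\mathrm{C}_0\mathrm{M}^k)^n$. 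Let $\ell(x)=u(\bar{x})+p\cdot(x-\bar{x})$ be the unique affine map with $|u(x)-\ell(x)|\le\tfrac{\mathrm{M}^k}{2}|x-\bar{x}|^2$ for $x\in\mathrm{B}^+_{14\sqrt{n}}$; using $\|u\|_{L^\infty}\le1$ one gets $|p|\lesssim\mathrm{M}^k$. With $r=\ell(\tilde{\mathcal{Q}})\le\tfrac12$ and a base point $y_0$ (on $\{x_n=0\}$ when $\tilde{\mathcal{Q}}$ abuts the flat boundary, otherwise its lowest vertex), set
$$
v(y)\defeq\frac{1}{\mathrm{M}^kr^2}\bigl(u(y_0+ry)-\ell(y_0+ry)\bigr).
$$
Then $v$ is a viscosity solution in $\mathrm{B}^+_{14\sqrt{n}}$ of $\tilde{F}_{\tilde\tau}(D^2v,y)=\tilde f(y)$, where $\tilde{F}_{\tilde\tau}(\mathrm{X},y)=\tfrac{1}{\mathrm{M}^k}F_\tau(\mathrm{M}^k\mathrm{X},y_0+ry)$ is again $(\lambda,\Lambda,0,0)$-elliptic with the same constants, its recession $(\tilde F)^\sharp(\mathrm{X},y)=F^\sharp(\mathrm{X},0,0,y_0+ry)$ still obeys (A3) (the $L^p$--average modulus only improves, since $r\le1$) and the $C^{1,1}$ estimate (A4) with the same $\mathrm{C}_1$, $\tilde\tau=\tau/\mathrm{M}^k\le\epsilon$, $\tilde f(y)=\tfrac{1}{\mathrm{M}^k}f(y_0+ry)$, and $v$ satisfies an oblique condition $\tilde\beta\cdot Dv+\tilde\gamma v=\tilde g$ on $\mathrm{T}_{14\sqrt{n}}$ with $\tilde\beta(y)=\beta(y_0+ry)$ (so $|\tilde\beta|\le1$, $\tilde\beta\cdot\overrightarrow{\mathbf n}\ge\mu_0$), $\tilde\gamma(y)=r\gamma(y_0+ry)\le0$, and $\tilde g$ discussed below. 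Since $\bar{x}\in\mathrm{B}_{14\sqrt{n}r}(y_0)$ and $\mathrm{B}_{14\sqrt{n}r}(y_0)\cap\{x_n\ge0\}\subset\mathrm{B}^+_{14\sqrt{n}}$, the Hardy--Littlewood bound yields
$$
\|\tilde f\|^n_{L^n(\mathrm{B}^+_{14\sqrt{n}})}\le\frac{\mathrm{C}(n)}{\mathrm{M}^{kn}r^n}\int_{\mathrm{B}_{14\sqrt{n}r}(y_0)}|f|^n\le\mathrm{C}(n)\,\mathrm{C}_0^n,
$$
which is $\le\epsilon^n$ once $\mathrm{C}_0$ is taken sufficiently small in terms of $n$ and $\epsilon$. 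Finally the paraboloid sandwich rescales to $|v(y)|\le\tfrac12|y-\bar{y}|^2$ with $\bar{y}=(\bar{x}-y_0)/r$, so $\bar{y}\in\mathcal{G}_1(v,\mathrm{B}^+_{14\sqrt{n}})$ lies in some admissible box $(\mathcal{Q}^{n-1}_2\times(0,2))+\tilde{x}_0$.

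Granting the control of $\tilde g$, Proposition~\ref{Prop.4.6} applies to $v$ (or, when $\tilde{\mathcal{Q}}$ sits deep inside $\mathbb{R}^n_+$ so the rescaled ball misses $\{x_n=0\}$, Caffarelli's interior analogue, cf.~\cite{CC}) and gives $\mathcal{L}^n(\mathcal{G}_{\mathrm{M}_\ast}(v,\mathrm{B}^+_{14\sqrt{n}})\cap((\mathcal{Q}^{n-1}_1\times(0,1))+x_0))\ge1-\tilde\epsilon_0$ for a suitable $x_0\in\mathrm{B}_{9\sqrt{n}}\cap\{x_n\ge0\}$, with $\tilde\epsilon_0$ as small as desired upon shrinking $\epsilon$. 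Undoing the dilation multiplies paraboloid openings by $\mathrm{M}^k$ (as $D^2v=\mathrm{M}^{-k}D^2u(y_0+r\,\cdot)$), so this becomes $\mathcal{L}^n(\mathcal{A}_{\mathrm{M}^k\mathrm{M}_\ast}(u,\mathrm{B}^+_{14\sqrt{n}})\cap\tilde{\mathcal{Q}})\le\tilde\epsilon_0\,\mathcal{L}^n(\tilde{\mathcal{Q}})=2^n\tilde\epsilon_0\,\mathcal{L}^n(\mathcal{Q})$. Since $\mathrm{M}\ge\mathrm{M}_\ast$ forces $\mathcal{A}_{\mathrm{M}^{k+1}}(u)\subset\mathcal{A}_{\mathrm{M}^k\mathrm{M}_\ast}(u)$, and $\mathcal{A}\cap\mathcal{Q}=\mathcal{A}_{\mathrm{M}^{k+1}}(u)\cap\mathcal{Q}\subset\mathcal{A}_{\mathrm{M}^{k+1}}(u)\cap\tilde{\mathcal{Q}}$, the choice $\tilde\epsilon_0=2^{-n}\epsilon_0$ delivers $\mathcal{L}^n(\mathcal{A}\cap\mathcal{Q})\le\epsilon_0\,\mathcal{L}^n(\mathcal{Q})$, i.e.\ the contrapositive of $(\mathbf{b})$; Lemma~\ref{Cal-Zyg} then yields the assertion.

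The real obstacle, as the paper's framing anticipates, is the rescaled oblique datum $\tilde g$. Unwinding, $\tilde g(y)=\tfrac{1}{\mathrm{M}^kr}[\,g(y_0+ry)-\beta(y_0+ry)\cdot p-\gamma(y_0+ry)\ell(y_0+ry)\,]$, which a crude bound estimates only by $O(r^{-1})$; that would let the constant $\mathrm{M}_\ast$ of Proposition~\ref{Prop.4.6} degenerate as the cubes shrink and wreck the iteration. The way around it is a cancellation: the oblique relation satisfied by $u$ (in the viscosity sense), evaluated at the foot $x^\flat=(\bar{x}',0)$ and combined with the boundary $C^{0,\alpha}$ estimates of Theorem~\ref{Holder_Est} (and, where available, the $C^{1,\alpha}$ boundary regularity of \cite{LiZhang}), shows that $p$ almost satisfies $\beta\cdot p+\gamma\ell=g$ near $\bar{x}$ up to an error $O(\mathrm{M}^kr)$, while the $C^{1,\alpha}$--regularity of $\beta$, $\gamma$, $g$ controls their oscillation over $\mathrm{B}_{14\sqrt{n}r}(y_0)$ by $O(r)$; together these make $g(y_0+ry)-\beta(y_0+ry)\cdot p-\gamma(y_0+ry)\ell(y_0+ry)=O\bigl((\|g\|_{C^{1,\alpha}}+\|\gamma\|_{C^{1,\alpha}}+\|\beta\|_{C^{1,\alpha}}+\mathrm{M}^k)r\bigr)$, so that $\|\tilde g\|_{C^{1,\alpha}(\mathrm{T}_{14\sqrt{n}})}$ is bounded by a universal constant, uniformly in $r$ and $k$, and hence so is $\mathrm{M}_\ast$. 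The remaining points are routine: fixing $\mathrm{C}_0$ so that $\|\tilde f\|_{L^n}\le\epsilon$, and the organizational split between dyadic cubes touching $\{x_n=0\}$ and those safely interior to $\mathbb{R}^n_+$.
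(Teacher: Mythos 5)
Your proposal follows the same route as the paper's own proof: the Calder\'on--Zygmund cube decomposition (Lemma~\ref{Cal-Zyg}) with $\delta=\epsilon_0$, condition $(\mathbf{a})$ from the boundary power decay, and condition $(\mathbf{b})$ by contraposition, rescaling the predecessor cube to unit scale and invoking Proposition~\ref{Prop.4.6} for cubes touching $\{x_n=0\}$ and the interior analogue \cite[Lemma 5.2]{PT} for cubes well inside $\mathbb{R}^n_+$; the only cosmetic differences are that you subtract the affine function $\ell$ before rescaling (the paper rescales $u$ itself and lets Proposition~\ref{Prop.4.6} absorb $\ell$ internally) and that you verify $(\mathbf{a})$ directly from Proposition~\ref{Prop2.12} rather than from Proposition~\ref{Prop.4.6}. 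Your closing paragraph on the rescaled oblique datum is the one place you go beyond the paper: the paper's proof simply writes $\tilde g(y)=\tfrac{2^{i}}{\mathrm{M}^{k}}g(\mathrm{T}(y))$, whose $C^{1,\alpha}$ norm is not uniformly bounded in the dyadic generation $i$, so the degeneration you flag is a real issue that the paper leaves unaddressed, and your proposed cancellation through the oblique relation (using the boundary $C^{1,\alpha}$ estimates of \cite{LiZhang} to get $|p-Du(\bar x',0)|\lesssim \mathrm{M}^{k}r$ plus the $C^{1,\alpha}$ oscillation of $\beta,\gamma,g$) is the right repair, though as written it is only a sketch and would need to be carried out in detail.
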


\begin{proof}
We will use Lemma \ref{Cal-Zyg}. Observe that $\mathcal{A} \subset \mathcal{B} \subset \left(\mathcal{Q}^{n-1}_1 \times (0,1)\right)$ and of the Proposition \ref{Prop.4.6} we conclude that $\Leb(\mathcal{A}) \le \delta <1$ provided we choice $\delta=\epsilon_0$. Thus, it remains to show the following implication: for dyadic cubes $\mathcal{Q}$
$$
\Leb\left(\mathcal{A} \cap \mathcal{Q}\right) > \epsilon_0 \Leb(\mathcal{Q}) \quad \Rightarrow \quad  \tilde{\mathcal{Q}} \subset \mathcal{B}.
$$
For that purpose, assume that for some $i \ge 1$, $\mathcal{Q}= \left(\mathcal{Q}^{n-1}_{\frac{1}{2^{i}}} \times \left(0, \frac{1}{2^{i}}\right) \right) + x_0$ is a dyadic cube with predecessor $\tilde{\mathcal{Q}} =  \left(\mathcal{Q}^{n-1}_{\frac{1}{2^{i-1}}} \times \left(0, \frac{1}{2^{i-1}}\right)\right) + \tilde{x}_0$. Now, we assume that $\mathcal{Q}$ satisfies
\begin{equation} \label{(14)}
	\Leb\left(\mathcal{A} \cap \mathcal{Q}\right)= \Leb\left(\mathcal{A}_{\mathrm{M}^{k+1}}(u, \mathrm{B}^+_{14\sqrt{n}}) \cap \mathcal{Q}\right) > \epsilon_0 \Leb(\mathcal{Q}),
\end{equation}
however the inclusion $\tilde{\mathcal{Q}} \subseteq \mathcal{B}$ does not hold. In consequence, there must be $x_1 \in \tilde{\mathcal{Q}}\setminus \mathcal{B}$, i.e.,
\begin{equation}\label{(15)}
	x_1 \in \tilde{\mathcal{Q}} \cap \mathcal{G}_{\mathrm{M}^k}(u, \mathrm{B}^+_{14\sqrt{n}})  \quad \text{and} \quad \mathcal{M}(f^n)(x_1) < (\mathrm{C}_0 \mathrm{M}^k)^n.
\end{equation}
In the sequel, we must split the analysis into two cases:
\begin{enumerate}
	\item[] \textbf{Case 1:} \, If $|x_0-(x^{\prime}_0, 0)| < \frac{1}{2^{i-3}}\sqrt{n}$.

	In this case, we define: $\mathrm{T}(y) \defeq (x^{\prime}_0,0) + \frac{1}{2^i}.y$ and $\tilde{u}: \tilde{\Omega} \rightarrow \mathbb{R}$, where $\tilde{\Omega} = \mathrm{T}^{-1}(\Omega)$, given by $\tilde{u}(y) \defeq \frac{2^{2i}}{\mathrm{M}^k}u(\mathrm{T}(y))$. Now, observe that $\mathcal{Q} \subset \left(\mathcal{Q}^{n-1}_1 \times (0,1)\right)$ implies that $\mathrm{B}^+_{14\sqrt{n}/2^i}(x^{\prime}_0,0) \subset \mathrm{B}^+_{14\sqrt{n}}$. Furthermore, such a $\tilde{u}$ is a viscosity solution to
	$$
	\left\{
	\begin{array}{rclcl}
		\tilde{F}_{\tau}(D^2\tilde{u},y) &=& \tilde{f}(x) & \mbox{in} & \mathrm{B}^+_{14\sqrt{n}},\\
		\tilde{\mathcal{B}}(y,\tilde{u},D\tilde{u}) &=&\tilde{g}(x) & \mbox{on} & \mathrm{T}_{14\sqrt{n}} ,
	\end{array}
	\right.
	$$
	where
$$
\left\{
	\begin{array}{rcl}
		\tilde{F}_{\tau}(\mathrm{X},y)& \defeq& \frac{\tau}{\mathrm{M}^k} F\left(\frac{\mathrm{M}^k}{\tau} \mathrm{X}, \mathrm{T}(y)\right),\\
		\tilde{f}(y)&\defeq& \frac{1}{\mathrm{M}^k} f(\mathrm{T}(y)),\\
		\tilde{\mathcal{B}}(y,s, \overrightarrow{v})&\defeq&\tilde{\beta}(y)\cdot \overrightarrow{v}+\tilde{\gamma}(y)s,\\
		\tilde{\beta}(y) &\defeq& \beta(\mathrm{T}(y)), \\
		\tilde{\gamma}(y)&\defeq& \frac{1}{2^{i}}\gamma(\mathrm{T}(y)),\\
		\tilde{g}(y)&\defeq& \frac{2^{i}}{\mathrm{M}^{k}}g(\mathrm{T}(y))
	\end{array}
\right.
$$

	Now notice that $\tilde{F}^{\sharp}$ fulfills $C^{1,1}$-estimates with the same constant as $F^{\sharp}$. Moreover, from \eqref{(15)} we obtain
	$$
	\|\tilde{f}\|_{L^n(\mathrm{B}^+_{14\sqrt{n}})} \le \frac{2^{i}}{\mathrm{M}^{k}} \left(\int_{\mathcal{Q}_{\frac{28\sqrt{n}}{2^i}(x_1)}}|f(x)|^{n}dx\right)^{\frac{1}{n}} \le 2^n \mathrm{C}_0;
	$$
	As a result, $\|\tilde{f}\|_{L^n(\mathrm{B}^+_{14\sqrt{n}})} \le \epsilon$ provided we select $\mathrm{C}_0$ small enough in \eqref{(15)}. In addition, from \eqref{(15)} we conclude that
$$
\mathcal{G}_1(\tilde{u}, \mathrm{T}^{-1}(\mathrm{B}^+_{14\sqrt{n}})) \cap \left(\mathcal{Q}^{n-2}_2 \times (0,2) + 2^i \left(\tilde{x}_0 - (x^{\prime}_0,0)\right) \right) \not= \emptyset.
$$
Furthermore, $|x_0-\tilde{x}_0| \le \frac{1}{2^i} \sqrt{n}$ implies $|2^i(\tilde{x}_0 - (x^{\prime}_0,0))| < 9\sqrt{n}$.

Therefore, we have shown that the assumptions of Proposition \ref{Prop.4.6} are true. Hence, it follows:
	$$
	\Leb\left( \mathcal{G}_{\mathrm{M}}(\tilde{u}, \mathrm{T}^{-1}(\mathrm{B}^+_{14\sqrt{n}})) \cap \left(\left(\mathcal{Q}^{n-1}_1 \times (0,1)\right) + 2^i(x_0-(x^{\prime}_0, 0))\right)\right) \ge 1-\epsilon_0.
	$$
	Thus,
$$\Leb\left(\mathcal{G}_{\mathrm{M}^{k+1}}(u; \mathrm{B}^+_{14\sqrt{n}}) \cap \mathcal{Q}\right) \ge (1-\epsilon_0) \Leb(\mathcal{Q}),
$$
 which contradicts \eqref{(14)}.
	
	\item[]\textbf{Case 2:}\, If $|x_0-(x^{\prime}_0,0)| \ge \frac{1}{2^{i-3}} \sqrt{n}$.

	In this case, we conclude that $\mathrm{B}_{\frac{\sqrt{n}}{2^{i-3}}}\left(x_0+\frac{1}{2^{i+1}}e_n\right) \subset \mathrm{B}^+_{8 \sqrt{n}}$, where $e_n$ is the $n^{\underline{th}}$ unit vector of canonical base. Now, by defining the transformation: $\mathrm{T}(y) \defeq \left(x_0 +\frac{1}{2^{i+1}}e_n\right) + \frac{1}{2^i}y$, we proceed similarly to the first part of the proof. Finally, applying \cite[Lemma 5.2]{PT} instead of Proposition \ref{Prop.4.6} we obtain a contradiction to \eqref{(14)}. This completes the proof of the Lemma.
\end{enumerate}
\end{proof}

In the sequel, we will establish the proof of Proposition \ref{T-flat}.

\begin{proof}[{\bf Proof of Proposition \ref{T-flat}}]
	Fix $x_0 \in \mathrm{B}_{1/2} \cap \{x_n \ge 0\}$. When $x_0 \in \mathrm{T}_{\frac{1}{2}}$, $0 < r < \frac{1-|x_0|}{14\sqrt{n}}$ define:
	$$
	\kappa \defeq \frac{\epsilon r}{\epsilon r^{-1} \|u\|_{L^{\infty}(\mathrm{B}^+_{14r\sqrt{n}}(x_0))}  + \|f\|_{L^{n}(\mathrm{B}^+_{14r\sqrt{n}}(x_0))}+\epsilon r^{-1}\Vert g\Vert_{C^{1,\alpha}(\mathrm{T}_{14r\sqrt{n}}(x_{0}))}}
	$$
	where the constant $\epsilon=\epsilon(n,\epsilon_0,\lambda,\Lambda,p, \mu_0, \alpha, \|\beta\|_{C^{1,\alpha}(\mathrm{T}_1)})$ is as the one in Proposition \ref{Prop.4.6} and $\epsilon_0 \in (0,  1)$ will be determined \textit{a posteriori}. Now, we define: $\tilde{u}(y) \defeq \frac{\kappa}{r^{2}}u(x_0+ry)$. Then, $\tilde{u}$ is a normalized viscosity solution to
	$$
	\left\{
	\begin{array}{rclcl}
		\tilde{F}(D^2 \tilde{u}, y) &=& \tilde{f}(x) & \mbox{in} & \mathrm{B}^+_{14\sqrt{n}},\\
		\tilde{\mathcal{B}}(y,\tilde{u},D\tilde{u})&=&\tilde{g}(x)  & \mbox{on} & \mathrm{T}_{14\sqrt{n}} .
	\end{array}
	\right.
	$$
	where
$$
\left\{
	\begin{array}{rcl}
		\tilde{F}(\mathrm{X}, y) &\defeq& \kappa F\left(\frac{1}{\kappa} \mathrm{X}, ry+x_0\right) \\
		\tilde{f}(y) &\defeq& \kappa f(x_0+ry)\\
		\tilde{\mathcal{B}}(y,s, \overrightarrow{v})&\defeq& \tilde{\beta}(y)\cdot \overrightarrow{v}+\tilde{\gamma}(y)s\\
		\tilde{\beta}(y) &\defeq&  \beta(x_0+ry)\\
		\tilde{\gamma}(y) &\defeq& r\gamma(x_{0}+ry)\\
		\tilde{g}(y)&\defeq& \frac{\kappa}{r}g(x_{0}+ry).
	\end{array}
\right.
$$
	Hence, $\tilde{F}$ fulfills (A1)-(A4). Moreover,
	\begin{eqnarray} \label{(16)}
		\|\tilde{f}\|_{L^n\left(\mathrm{B}^+_{14\sqrt{n}}\right)} &=& \frac{\kappa}{r} \|f\|_{L^n\left(\mathrm{B}^+_{14r\sqrt{n}}\right)} \le \epsilon \,\,\,\,  \textrm{and}  \,\,\, \|\tilde{\beta}\|_{C^{1,\alpha}(\mathrm{T}_{14\sqrt{n}})} \leq 1,
	\end{eqnarray}
	which ensures that the hypotheses of Lemma \ref{lemma4.8} are in force. Now, let $\mathrm{M}>0$ and $\mathrm{C}_0>0$ be as in the Lemma \ref{lemma4.8} and choose $\epsilon_0 \defeq \frac{1}{2\mathrm{M}^p}$. Now, for $k \geq 0$ we define
	\begin{eqnarray*}
		\alpha_k&\defeq & \Leb\left(\mathcal{A}_{\mathrm{M}^k}(\tilde{u}, \mathrm{B}^+_{14\sqrt{n}}) \cap \left(\mathcal{Q}^{n-1}_1 \times (0,1)\right)\right)\\
		\beta_k &\defeq& \Leb\left( \left\{ x \in \left(\mathcal{Q}^{n-1}_1 \times (0,1)\right); \,\, \mathcal{M}(\tilde{f}^n)(x) \ge (\mathrm{C}_0\mathrm{M}^k)^n\right\} \right).
	\end{eqnarray*}
	As a result, Lemma \ref{lemma4.8} implies that $\alpha_{k+1} \le \epsilon_0 \left(\alpha_k+\beta_k\right)$ and thus
	\begin{equation} \label{(17)}
		\alpha_k \le \epsilon^k_0 + \sum_{i=0}^{k-1} \epsilon^{k-i}_0 \beta_i.
	\end{equation}
	On the other hand, from assumption (A2), we have  $\tilde{f}^n \in L^{\frac{p}{n}}(\mathrm{B}^+_{14\sqrt{n}})$, consequently $\mathcal{M}(\tilde{f}^n) \in L^{\frac{p}{n}}(\mathrm{B}^+_{14\sqrt{n}})$. Thus, by \eqref{(16)}
	$$
	\|\mathcal{M}(\tilde{f}^n)\|_{L^{\frac{n}{p}}(\mathrm{B}^+_{14\sqrt{n}})} \le \mathrm{C}(n,p) \|\tilde{f}^n\|_{L^{\frac{n}{p}}(\mathrm{B}^+_{14\sqrt{n}})} \le \mathrm{C}(n,p) \|\tilde{f}\|^n_{L^p(\mathrm{B}^+_{14\sqrt{n}})} \le \mathrm{C}.
	$$
	Therefore, by Proposition \ref{P1} we obtain
	\begin{equation} \label{(18)}
		\sum_{k=1}^{\infty} \mathrm{M}^{p k} \beta_k \le \mathrm{C}(n,p).
	\end{equation}
	Finally, taking into account the choice of $\epsilon_0$, \eqref{(17)} and \eqref{(18)} we conclude that
	$$
	\sum_{k=1}^{\infty} \mathrm{M}^{pk} \alpha_k \le  \sum_{k=1}^{\infty} 2^{-k} + \left(\sum_{k=0}^{\infty}\mathrm{M}^{pk}\beta_k\right) \cdot \left(\sum_{k=1}^{\infty} \mathrm{M}^{pk} \epsilon^k_0\right) \le \mathrm{C}(n,p),
	$$
	which implies that $\|D^{2} \tilde{u}\|_{L^p(\overline{\mathrm{B}^+_{1/2}})} \le \mathrm{C}(n,p,\mathrm{M})$ and consequently
	\begin{equation*} \label{(19)}
		\|D^2 u\|_{L^p\left(\overline{\mathrm{B}^+_{\frac{r}{2}}(x_0)}\right)} \le \mathrm{C}(n,\lambda,\Lambda,p,r) \left(\|u\|_{L^{\infty}(\mathrm{B}^+_1)}+ \|f\|_{L^p(\mathrm{B}^+_1)}+\Vert g\Vert_{C^{1,\alpha}(\mathrm{T}_{1})}\right).
	\end{equation*}
	
	On the other hand, if $x_0 \in \mathrm{B}^+_{1/2}$, we can use the result of interior estimates (cf. \cite[Theorem 6.1]{PT}, see also \cite{daSR19}). Finally, by combining interior and boundary estimates, we obtain the desired results by using a standard covering argument. This completes the proof of the Proposition.
	
\end{proof}


\begin{corollary}
Let $u$ be a bounded viscosity solution of
$$
\left\{
\begin{array}{rclcl}
 F(D^2u,Du,u, x) &=& f(x) & \mbox{in} & \mathrm{B}^+_1,\\
 \mathcal{B}(x,u,Du)&=& g(x) & \mbox{on} & \mathrm{T}_1,
\end{array}
\right.
$$
where $\beta,\gamma,g \in C^{1,\alpha}(\mathrm{T}_1)$ with $\beta \cdot \overrightarrow{\textbf{n}} \ge \mu_0$ for some $\mu_0 >0$, $\gamma \le 0$. Further, assume that  (A1)-(A4) are in force. Then, there exists a constant $\mathrm{C}>0$ depending on  $n$, $\lambda$, $\Lambda$, $p$, $C_{1}$, such that  $u \in W^{2, p}\left(\mathrm{B}^+_{\frac{1}{2}}\right)$ and
$$
	\|u\|_{W^{2, p}\left(\mathrm{B}^+_{\frac{1}{2}}\right)} \le \mathrm{C} \cdot \left( \|u\|_{L^{\infty}(\mathrm{B}^+_1)} + \|f\|_{L^p(\mathrm{B}^+_1)}+\Vert g\Vert_{C^{1,\alpha}(\overline{\mathrm{T}_{1}})}\right).
$$
\end{corollary}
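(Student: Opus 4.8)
The plan is to reduce the full-dependence operator $F(D^2u,Du,u,x)$ to the no-lower-order case handled in Proposition \ref{T-flat} by absorbing the gradient and zeroth-order terms into the right-hand side. Concretely, for a bounded viscosity solution $u$ of the stated problem, set $\tilde f(x) \defeq f(x) - \big[F(D^2u,Du,u,x)-F(D^2u,0,0,x)\big]$ and observe that $u$ solves $\tilde F(D^2 u,x) = \tilde f(x)$ in $\mathrm{B}^+_1$ in the viscosity sense, where $\tilde F(\mathrm{X},x)\defeq F(\mathrm{X},0,0,x)$. The structural assumption (A1) gives the pointwise bound $|F(D^2u,Du,u,x)-F(D^2u,0,0,x)| \le \sigma|Du(x)| + \xi|u(x)|$, so controlling $\tilde f$ in $L^p$ requires an $L^p$ (indeed $L^\infty$ and $W^{1,p}$-type) control of $u$ and $Du$. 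The recession operator of $\tilde F$ coincides with $F^{\sharp}(\cdot,0,0,\cdot)$, so (A3)--(A4) carry over verbatim to $\tilde F$, and Proposition \ref{T-flat} applies to the pair $(\tilde F,\tilde f)$ once we know $\tilde f\in L^p(\mathrm{B}^+_1)\cap C^0(\mathrm{B}^+_1)$.

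The key intermediate step is therefore to establish \emph{a priori} $C^{1,\alpha}$ (or at least Lipschitz plus $W^{1,p}$) estimates for $u$ up to the flat boundary $\mathrm{T}_1$ under the oblique condition, so that $\sigma|Du|+\xi|u|\in L^p$ with a quantitative bound by $\|u\|_{L^\infty(\mathrm{B}^+_1)}+\|f\|_{L^p(\mathrm{B}^+_1)}+\|g\|_{C^{1,\alpha}(\overline{\mathrm{T}_1})}$. First I would invoke the global H\"older estimate of Theorem \ref{Holder_Est} together with the A.B.P. bound (Lemma \ref{ABP-fullversion}) to get $\|u\|_{C^{0,\alpha'}(\overline{\mathrm{B}^+_{3/4}})}$. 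Then, since $u\in \mathcal S(\lambda,\Lambda,\sigma,f_0)$ for $f_0 = f - \xi|u| \in L^p$ (absorbing only the zeroth-order term), the boundary $C^{1,\alpha}$ regularity theory for Pucci-type inequalities with oblique boundary data --- available from the Milakis--Silvestre/Li--Zhang framework cited in the paper --- yields a gradient bound $\|Du\|_{L^\infty(\mathrm{B}^+_{5/8})} \le \mathrm{C}\big(\|u\|_{L^\infty(\mathrm{B}^+_1)}+\|f\|_{L^p(\mathrm{B}^+_1)}+\|g\|_{C^{1,\alpha}(\overline{\mathrm{T}_1})}\big)$. With this in hand, $\tilde f$ is controlled in $L^p(\mathrm{B}^+_{5/8})$.

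Next I would apply Proposition \ref{T-flat} (after a harmless rescaling from $\mathrm{B}^+_{5/8}$ to $\mathrm{B}^+_1$, adjusting constants) to conclude $u\in W^{2,p}(\mathrm{B}^+_{1/2})$ with
$$
\|u\|_{W^{2,p}(\mathrm{B}^+_{1/2})} \le \mathrm{C}\Big(\|u\|_{L^\infty(\mathrm{B}^+_1)} + \|\tilde f\|_{L^p(\mathrm{B}^+_{5/8})} + \|g\|_{C^{1,\alpha}(\overline{\mathrm{T}_1})}\Big),
$$
and then substitute the bound $\|\tilde f\|_{L^p(\mathrm{B}^+_{5/8})} \le \|f\|_{L^p(\mathrm{B}^+_1)} + \sigma\|Du\|_{L^p(\mathrm{B}^+_{5/8})} + \xi\|u\|_{L^p(\mathrm{B}^+_{5/8})} \le \mathrm{C}\big(\|u\|_{L^\infty(\mathrm{B}^+_1)}+\|f\|_{L^p(\mathrm{B}^+_1)}+\|g\|_{C^{1,\alpha}(\overline{\mathrm{T}_1})}\big)$ coming from the previous paragraph. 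This produces the claimed estimate with a constant depending only on $n,\lambda,\Lambda,p,\mathrm{C}_1$ (and on $\sigma,\xi,\mu_0$ and the norms of $\beta,\gamma$, which are absorbed into ``universal'').

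The main obstacle is the gradient estimate: one must be sure that the boundary $C^{1,\alpha}$ regularity theory genuinely applies to members of the Pucci class $\mathcal S(\lambda,\Lambda,\sigma,f_0)$ with $f_0\in L^p$, $p\ge n$, and with the oblique (not merely Neumann) boundary condition $\beta\cdot Du + \gamma u = g$, including the inhomogeneous datum $g\in C^{1,\alpha}$. This is exactly the regularity input cited in the paper (Milakis--Silvestre \cite{Sil1}, Li--Zhang \cite{LiZhang}), so the work is to set up the correct localization/scaling and to track that the gradient bound has the stated dependence; the rest is the routine absorption argument and an application of Proposition \ref{T-flat}. A minor technical point is ensuring $\tilde f$ is continuous so that the $C^0$-viscosity framework of Proposition \ref{T-flat} is literally applicable --- this follows from the continuity of $u$, $Du$ (from the $C^{1,\alpha}$ estimate) and of $f$, or else one works throughout with the $L^p$-viscosity notion, which is equally available.
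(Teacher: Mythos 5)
Your reduction is the same as the paper's: rewrite the problem for $\tilde F(\mathrm{X},x)\defeq F(\mathrm{X},0,0,x)$ with a right-hand side $\tilde f$ satisfying $|\tilde f|\le \sigma|Du|+\xi|u|+|f|$ by (A1), apply Proposition \ref{T-flat}, and then control the lower-order terms. The only divergence is in how $Du$ is controlled: the paper invokes the $W^{1,p}$ estimate of \cite[Corollary 4.5]{BJ}, giving $\|u\|_{W^{1,p}(\mathrm{B}^+_1)}\le \mathrm{C}(\|u\|_{L^{\infty}(\mathrm{B}^+_1)}+\|f\|_{L^{p}(\mathrm{B}^+_1)})$, which is exactly what is needed to bound $\|\tilde f\|_{L^p}$; you instead propose a full boundary $C^{1,\alpha}$ (hence $L^\infty$-gradient) estimate from the Milakis--Silvestre/Li--Zhang theory. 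That is stronger than necessary and is delicate at the endpoint $p=n$ (which the corollary admits), where $C^{1,\alpha}$ estimates with merely $L^n$ right-hand side are borderline; the $W^{1,p}$ route is the safer and is what the paper uses. Otherwise your argument, including the observation that the recession profile and hypotheses (A3)--(A4) pass to $\tilde F$ unchanged and the final absorption of $\sigma\|Du\|_{L^p}+\xi\|u\|_{L^p}$ into the stated right-hand side, matches the paper's proof.
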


\begin{proof}
Notice that $u$ is also a viscosity solution of
\begin{equation*}
\left\{
\begin{array}{rclcl}
\tilde{F}(D^2u, x) &=& \tilde{f}(x) & \mbox{in} & \mathrm{B}^+_1,\\
 \mathcal{B}(x,u,Du)&=& g(x) & \mbox{on} & \mathrm{T}_1 .
\end{array}
\right.
\end{equation*}
where $\tilde{F}(\mathrm{X},x)  \defeq  F(\mathrm{X},0,0,x)$ and $\tilde{f}$ is a function satisfying
\begin{eqnarray*}
|\tilde{f}|\leq \sigma|Du|+\xi|u|+|f|.
\end{eqnarray*}
Thus, we can apply Proposition \ref{T-flat} and conclude that
\begin{eqnarray}\label{mens2}
\|u\|_{W^{2, p}\left(\mathrm{B}^+_{\frac{1}{2}}\right)} \le \mathrm{C} \cdot \left( \|u\|_{L^{\infty}(\mathrm{B}^+_1)} + \|\tilde{f}\|_{L^p(\mathrm{B}^+_1)}+\Vert g\Vert_{C^{1,\alpha}(\overline{\mathrm{T}_{1}})}\right).
\end{eqnarray}
Now, using the same reasoning as in \cite[Corollary 4.5]{BJ} we conclude that $u\in W^{1,p}(\mathrm{B}_{1}^{+})$ and
\begin{equation}\label{mens3}
\Vert u\Vert_{W^{1,p}(\mathrm{B}_{1}^{+})}\leq \mathrm{C}\cdot (\Vert u\Vert_{L^{\infty}(\mathrm{B}^{+}_{1})}+\Vert f\Vert_{L^{p}(\mathrm{B}^{+}_{1})}).
\end{equation}
Finally, by combining (\ref{mens2}) and (\ref{mens3}) we complete the desired estimate.
\end{proof}

\begin{corollary}\label{Cor}
Let $u$ be a bounded $L^{p}-$viscosity solution of
$$
\left\{
\begin{array}{rclcl}
F(D^2u, Du, u, x) &=& f(x) & \mbox{in} & \mathrm{B}^+_1,\\
 \mathcal{B}(x,u,Du)&=& g(x) & \mbox{on} & \mathrm{T}_1,
\end{array}
\right.
$$
where $\beta,\gamma, g \in C^{1,\alpha}(\mathrm{T}_1)$ with $\beta \cdot \overrightarrow{\textbf{n}} \ge \mu_0$ for some $\mu_0 >0$, $\gamma \le 0$ and $f \in L^p(\mathrm{B}^+_1)$, for $n \leq p < \infty$. Further, assume that  $F^{\sharp}$ satisfies (A4) and $F$ fulfills the condition $(SC)$. Then, there exist positive constants $\beta_0=\beta_0(n,\lambda,\Lambda,p)$, $r_0 = r_0(n, \lambda, \Lambda, p)$ and $\mathrm{C}=\mathrm{C}(n,\lambda,\Lambda,p,r_0)>0$, such that if
$$
\left(\intav{\mathrm{B}_r(x_0) \cap \mathrm{B}^+_1} \psi_{F^{\sharp}}(x, x_0)^p dx\right)^{\frac{1}{p}} \le \psi_0
$$
for any $x_0 \in \mathrm{B}^+_1$ and $r \in (0, r_0)$, then $u \in W^{2, p}\left(\overline{\mathrm{B}^+_{\frac{1}{2}}}\right)$ and
$$
\|u\|_{W^{2, p}\left(\overline{\mathrm{B}^+_{\frac{1}{2}}}\right)} \le \mathrm{C} \cdot\left( \|u\|_{L^{\infty}(\mathrm{B}^+_1)} +\|f\|_{L^p(\mathrm{B}^+_1)}+\Vert g\Vert_{C^{1,\alpha}(\overline{\mathrm{T}_{1}})}\right).
$$
\end{corollary}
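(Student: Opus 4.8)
The plan is to reduce the assertion to the already-settled case of a continuous source. Since $f\in L^{p}(\mathrm{B}^+_1)$, I would first fix a sequence $(f_k)_{k\in\mathbb{N}}\subset C^{0}(\overline{\mathrm{B}^+_1})\cap L^{p}(\mathrm{B}^+_1)$ with $f_k\to f$ in $L^{p}(\mathrm{B}^+_1)$ and $\|f_k\|_{L^{p}(\mathrm{B}^+_1)}\le\|f\|_{L^{p}(\mathrm{B}^+_1)}+1$ for every $k$. Invoking the well-posedness of the oblique problem --- the comparison principle and Perron's method in the spirit of Theorems \ref{comparation}, \ref{Unicidade} and \ref{Existencia}, which rest precisely on hypothesis $(SC)$ together with $\gamma\le 0$ and $\beta\cdot\overrightarrow{\textbf{n}}\ge\mu_0$ (see also \cite[Chapter 7]{Leiberman} and \cite[Section 2]{CCKS}) --- one produces, for each $k$, the unique $L^{p}$-viscosity solution $u_k\in C^{0}(\overline{\mathrm{B}^+_1})$ of
\begin{equation*}
\left\{
\begin{array}{rclcl}
 F(D^2u_k,Du_k,u_k, x) &=& f_k(x) & \mbox{in} & \mathrm{B}^+_1,\\
 \mathcal{B}(x,u_k,Du_k)&=& g(x) & \mbox{on} & \mathrm{T}_1,\\
 u_k(x) &=& u(x) & \mbox{on} & \partial\mathrm{B}^+_1\setminus\mathrm{T}_1.
\end{array}
\right.
\end{equation*}

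Next I would control $w_k\defeq u-u_k$. By uniform ellipticity (A1) (cf. \cite[Lemma 4.3]{BJ} and Theorem \ref{comparation}), $w_k$ lies in the Pucci class $\mathcal{S}\left(\frac{\lambda}{n},\Lambda,\sigma,f-f_k\right)$ in $\mathrm{B}^+_1$, with $\mathcal{B}(x,w_k,Dw_k)=0$ on $\mathrm{T}_1$, $\gamma\le 0$, and $w_k=0$ on $\partial\mathrm{B}^+_1\setminus\mathrm{T}_1$. The A.B.P. Maximum Principle (Lemma \ref{ABP-fullversion}) then gives
$$
\|w_k\|_{L^{\infty}(\mathrm{B}^+_1)}\le \mathrm{C}(n,\lambda,\Lambda,\sigma,\mu_0)\,\|f-f_k\|_{L^{n}(\mathrm{B}^+_1)}\longrightarrow 0\quad\text{as }k\to\infty,
$$
so $u_k\to u$ uniformly on $\overline{\mathrm{B}^+_1}$; in particular $\|u_k\|_{L^{\infty}(\mathrm{B}^+_1)}\le\|u\|_{L^{\infty}(\mathrm{B}^+_1)}+1$ for $k$ large.

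Since $f_k$ is continuous, $F^{\sharp}$ fulfills (A4), and the $\psi_0$-smallness condition holds --- the recession operator $F^{\sharp}$, hence $\psi_{F^{\sharp}}$, $\psi_0$ and $r_0$, being the same along the whole sequence --- the preceding Corollary (equivalently, Proposition \ref{T-flat} after freezing the lower-order slots and absorbing $\sigma|Du_k|+\xi|u_k|$ into the right-hand side exactly as in the proof of the preceding Corollary) yields $u_k\in W^{2,p}\left(\overline{\mathrm{B}^+_{\frac{1}{2}}}\right)$ and
$$
\|u_k\|_{W^{2,p}\left(\overline{\mathrm{B}^+_{\frac{1}{2}}}\right)}\le\mathrm{C}\left(\|u_k\|_{L^{\infty}(\mathrm{B}^+_1)}+\|f_k\|_{L^{p}(\mathrm{B}^+_1)}+\|g\|_{C^{1,\alpha}(\overline{\mathrm{T}_1})}\right),
$$
with $\mathrm{C}=\mathrm{C}(n,\lambda,\Lambda,p,r_0)$ independent of $k$. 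As the right-hand side is bounded uniformly in $k$, the sequence $(u_k)$ is bounded in $W^{2,p}(\mathrm{B}^+_{\frac{1}{2}})$; after passing to a subsequence, $u_k\rightharpoonup u_{\infty}$ weakly in $W^{2,p}(\mathrm{B}^+_{\frac{1}{2}})$ and, by the compact Sobolev embedding $W^{2,p}(\mathrm{B}^+_{\frac{1}{2}})\hookrightarrow C^{0}(\overline{\mathrm{B}^+_{\frac{1}{2}}})$ (valid for $p\ge n$), $u_k\to u_{\infty}$ uniformly. Comparing with $u_k\to u$ forces $u_{\infty}=u$, whence $u\in W^{2,p}\left(\overline{\mathrm{B}^+_{\frac{1}{2}}}\right)$, and weak lower semicontinuity of the $W^{2,p}$-norm gives
$$
\|u\|_{W^{2,p}\left(\overline{\mathrm{B}^+_{\frac{1}{2}}}\right)}\le\liminf_{k\to\infty}\|u_k\|_{W^{2,p}\left(\overline{\mathrm{B}^+_{\frac{1}{2}}}\right)}\le\mathrm{C}\left(\|u\|_{L^{\infty}(\mathrm{B}^+_1)}+\|f\|_{L^{p}(\mathrm{B}^+_1)}+\|g\|_{C^{1,\alpha}(\overline{\mathrm{T}_1})}\right),
$$
which is the desired estimate.

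The delicate point of this scheme is the very first step: one must secure solvability of the auxiliary oblique problems for the \emph{full} nonlinearity $F(D^2u_k,Du_k,u_k,x)$ under $(SC)$ alone --- the well-posedness statements reproduced in Section \ref{section2} being phrased for operators of the form $F(D^2u,x)$ --- so one appeals to the comparison principle and Perron's method for general nonlinearities satisfying (A1) and $(SC)$ (available through \cite[Chapter 7]{Leiberman} and \cite[Section 2]{CCKS}, using $\gamma\le 0$ in place of strict properness), and then checks that $w_k=u-u_k$ genuinely belongs to $\mathcal{S}\left(\frac{\lambda}{n},\Lambda,\sigma,f-f_k\right)$ so that Lemma \ref{ABP-fullversion} is applicable. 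Everything else is a routine compactness argument, the independence of $\mathrm{C}$ of $k$ being transparent since $n,\lambda,\Lambda,\sigma,\xi,p,\mu_0,\|\beta\|_{C^{1,\alpha}(\overline{\mathrm{T}_1})},\|\gamma\|_{C^{1,\alpha}(\overline{\mathrm{T}_1})}$ and the recession operator $F^{\sharp}$ are held fixed throughout.
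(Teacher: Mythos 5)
Your proposal is correct and follows essentially the same strategy as the paper's proof: approximate $f$ in $L^p$ by continuous functions, solve the auxiliary oblique problems via existence/uniqueness, apply the continuous-source estimate (Proposition \ref{T-flat} and the preceding Corollary) uniformly in $k$, and pass to the weak $W^{2,p}$ limit, identifying it with $u$ through the A.B.P. principle. The only substantive differences are that the paper first reduces to the gradient-free operator $F(\mathrm{X},0,0,x)$ so that its own Theorems \ref{Unicidade} and \ref{Existencia} directly furnish the approximating solutions, and identifies the limit only at the end via the Stability Lemma plus A.B.P. applied to $u_{\infty}-u$, whereas you keep the full nonlinearity in the auxiliary problems (outsourcing well-posedness to \cite{Leiberman} and \cite{CCKS}) and compare $u_k$ with $u$ directly along the sequence --- both devices rest on the same comparison tools (Theorem \ref{comparation}, \cite[Lemma 4.3]{BJ}) and are equally legitimate here.
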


\begin{proof}
It is sufficient to prove the result for equations with no dependence on $Du$ and $u$ (see \cite[Theorem 4.3]{Winter} for details). We will approximate $f$ in $L^p$ by functions $f_j \in C^{\infty}(\overline{\mathrm{B}^+_1}) \cap L^p(\mathrm{B}^+_1)$ such that $f_{j}\to f$ in $L^{p}(\mathrm{B}^{+}_{1})$, and also approximate $g$ by a sequence $(g_{j})$ in $C^{1,\alpha}(\mathrm{T}_{1})$ with the property that $g_{j}\to g$ in $C^{1,\alpha}(\mathrm{T}_{1})$. By Theorems of Uniqueness and Existence (Theorem \ref{Unicidade} and \ref{Existencia}), there exists a sequence of functions $ u_{j}\in C^{0}(\overline{\mathrm{B}^{+}_{1}})$, which they are viscosity solutions of following family of PDEs:
$$
\left\{
\begin{array}{rclcl}
F(D^2u_j, x) &=& f_j(x) & \mbox{in} & \mathrm{B}^+_1,\\
\mathcal{B}(x,u_{j},Du_{j})&=& g_{j}(x) & \mbox{on} & \mathrm{T}_{1}\\
u_{j}(x)&=&u(x) & \mbox{on} & \partial \mathrm{B}^{+}_{1}\setminus \mathrm{T}_{1}.
\end{array}
\right.
$$
Therefore, the assumptions of the Theorem \ref{T-flat} are in force. As a result,
$$
\|u_j\|_{W^{2,p}\left(\overline{\mathrm{B}^{+}_{\frac{1}{2}}}\right)} \le \mathrm{C}(\verb"universal").\left( \|u_j\|_{L^{\infty}(\mathrm{B}^+_1)} + \|f_j\|_{L^p(\mathrm{B}^+_1)}+\Vert g_{j}\Vert_{C^{1,\alpha}(\overline{\mathrm{T}_{1}})}\right),
$$
for a constant $\mathrm{C}>0$. Furthermore, a standard covering argument also yields $u_j \in W^{2,p}(\mathrm{B}^+_1)$. From Lemma $\ref{ABP-fullversion}$, $(u_j)_{j \in \mathbb{N}}$ is uniformly bounded in $W^{2,p}(\overline{\mathrm{B}^+_{\rho}})$ for $\rho  \in (0, 1)$.

Once again, we can apply Lemma \ref{ABP-fullversion} and obtain
$$
\|u_j-u_k\|_{L^{\infty}\left(\mathrm{B}^+_{1}\right)} \le \mathrm{C}(n,\lambda,\Lambda, \mu_0)(\|f_j-f_k\|_{L^p(\mathrm{B}^+_1)}+\Vert g_{j}-g_{k}\Vert_{C^{1,\alpha}(\overline{\mathrm{T}_{1}})}).
$$
Thus, $u_j \to u_{\infty} \quad \textrm{in} \quad C^0(\overline{\mathrm{B}^+_1})$. Moreover, since $(u_j)_{j \in \mathbb{N}}$ is bounded in $W^{2,p}\left(\mathrm{B}^+_{\frac{1}{2}}\right)$  we obtain $u_j \to u_{\infty}$ weakly in $W^{2,p}\left(\mathrm{B}^+_{\frac{1}{2}}\right)$.
Thus,
$$
\|u_{\infty}\|_{W^{2,p}(\overline{\mathrm{B}^{+}_{1/2}})} \le \mathrm{C} \cdot\left( \|u_{\infty}\|_{L^{\infty}(\mathrm{B}^+_1)} + \|f\|_{L^p(\mathrm{B}^+_1)}+\Vert g\Vert_{C^{1,\alpha}(\overline{\mathrm{T}_{1}})}\right).
$$
Finally, stability results (see Lemma \ref{Est}) ensure that $u_{\infty}$ is an $L^{p}-$viscosity solution to
$$
\left\{
\begin{array}{rclcl}
F(D^2u_{\infty}, x) &=& f(x)& \mbox{in} & \mathrm{B}^+_1,\\
 \mathcal{B}(x,u_{\infty},Du_{\infty})&=& g(x) & \mbox{on} & \mathrm{T}_1 \\
u_{\infty}(x)&=& u(x) & \mbox{on} & \partial \mathrm{B}^{+}_{1}\setminus \mathrm{T}_{1}.
\end{array}
\right.
$$
Thus, $w \defeq u_{\infty}-u$ fulfills
$$
\left\{
\begin{array}{rclcl}
w\in S(\frac{\lambda}{n}, \Lambda,0) & \mbox{in} & \mathrm{B}^+_1,\\
\mathcal{B}(x, w, Dw)= 0 & \mbox{on} & \mathrm{T}_1 \\
w= 0 & \mbox{on} & \partial \mathrm{B}^{+}_{1}\setminus \mathrm{T}_{1}.
\end{array}
\right.
$$
which by Lemma $\ref{ABP-fullversion}$ we can conclude that $w=0$ in $\overline{\mathrm{B}^{+}_{1}}\setminus \mathrm{T}_{1}$. By continuity, $w=0$ in $\overline{\mathrm{B}^{+}_{1}}$ which completes the proof.
\end{proof}

\bigskip

Finally, we are now in position to prove the Theorem \ref{T1}.

\begin{proof}[{\bf Proof of Theorem \ref{T1}}]
Based on standard reasoning (cf. \cite{Winter}) it is important to highlight that it is always possible to perform a change of variables in order to flatten the boundary. For this end, consider $x_0 \in \partial \Omega$. Since $\partial \Omega \in C^{2,\alpha}$ there exists a neighborhood of $x_0$, which we will label $\mathcal{V}(x_0)$ and a $C^{2, \alpha}-$diffeomorfism $\Phi: \mathcal{V}(x_0) \to \mathrm{B}_1(0)$ such that $    \Phi(x_0) = 0 \quad \mbox{and} \quad \Phi(\Omega \cap \mathcal{V}(x_0)) = \mathrm{B}^{+}_1$. Now, for $\tilde{\varphi} \in W^{2,p}(\mathrm{B}^+_1)$ we set $\varphi = \tilde{\varphi} \circ \Phi \in W^{2, p}(\mathcal{V}(x_0))$. Thus, we obtain that
$$
  D \varphi = (D \tilde{\varphi} \circ \Phi) D \Phi \quad \text{and}\quad
	D^2 \varphi = D \Phi^t \cdot (D^2 \tilde{\varphi} \circ \Phi) \cdot D \Phi + ((D \tilde{\varphi} \circ \Phi) \partial_{ij} \Phi)_{1 \le i,j \le n}.
$$
Moreover, $\tilde{u} \defeq u \circ \Phi^{-1} \in C^0(\mathrm{B}^+_1)$. Next, we observe that $\tilde{u}$ is an $L^{p}-$viscosity solution to
$$
\left\{
\begin{array}{rclcl}
 \tilde{F}(D^2 \tilde{u}, D \tilde{u}, \tilde{u}, y) &=& \tilde{f}(x) & \mbox{in} & \mathrm{B}^+_1,\\
\tilde{\mathcal{B}} (y, \tilde{u}, D \tilde{u}) & = & \tilde{g}(x) &\mbox{on} & \mathrm{T}_1.
\end{array}
\right.
$$
where for $y=\Phi^{-1}(x)$ we have
$$
\left\{
\begin{array}{rcl}
\tilde{F}\left(D^2 \tilde{\varphi},D \tilde{\varphi}, x\right) & \defeq & F\left(D\Phi^t(y) D^2 \tilde{\varphi} D\Phi(y) + D \tilde{\varphi}D^2 \Phi(y), D \tilde{\varphi}D\Phi(y), \tilde{u}, y\right)\\
\tilde{f}(x) & \defeq & f \circ \Phi^{-1}(x)\\
\tilde{\mathcal{B}}(x,s, \overrightarrow{v})& \defeq &\tilde{\beta}(x)\cdot \overrightarrow{v}+\tilde{\gamma}(x)s\\
\tilde{\beta}(x) & \defeq & (\beta \circ \Phi^{-1}) \cdot (D \Phi \circ \Phi^{-1})^{t}\\
\tilde{\gamma}(x) & \defeq & (\gamma \circ \Phi^{-1}) \cdot (D \Phi \circ \Phi^{-1})^{t}\\
\tilde{g}(x) & \defeq & g \circ \Phi^{-1}
\end{array}
\right.
$$
Furthermore, note that $\tilde{F}(\mathrm{X}, \varsigma, \eta, y) = F\left(D \Phi^t(y) \cdot \mathrm{X} \cdot D \Phi(y) + \varsigma D^2\Phi, \varsigma D\Phi(y), \eta, y\right)$ is a uniformly elliptic operator with ellipticity constants $\lambda \mathrm{C}(\Phi)$, $\Lambda \mathrm{C}(\Phi)$.
Thus,
$$
	\tilde{F}^{\sharp}(\mathrm{X}, \varsigma, \eta, x) = F^{\sharp}\left(D\Phi^t(\Phi^{-1}(x)) \cdot \mathrm{X}\cdot D\Phi(\Phi^{-1}(x)) +  \varsigma D^2 \Phi(\Phi^{-1}(x)), 0,0, \Phi^{-1}(x)\right).
$$
In consequence, we conclude that $\psi_{\tilde{F}^{\sharp}}(x,x_0) \le \mathrm{C}(\Phi) \psi_{F^{\sharp}}(x,x_0)$, ensuring that $\tilde{F}$ falls into the assumptions of Corollary \ref{Cor}. This finishes the proof, as well as establishes the proof of the Theorem \ref{T1}.
\end{proof}


\section{BMO type estimates: Proof of Theorem \ref{BMO}}\label{Section5}

\hspace{0.4cm} This section will be devoted to establishing boundary BMO type estimates. Before proving Theorem \ref{BMO} we will present some key results that play a crucial role in our strategy.

\begin{lemma}[{\bf Approximation Lemma II}]\label{Prop-BMO}
	Let $g \in C^{1,\psi}(\mathrm{T}_1)$ such that $\|g\|_{C^{1,\psi}(\mathrm{T}_1)} \le \mathrm{C}_g$ and $\beta \in C^{1, \psi}(\overline{\mathrm{T}_1})$. Let $F$ and $F^{\infty}$ be $\left(\frac{\lambda}{n}, \Lambda\right)$-elliptic operators. Given $\hat{\delta}_0>0$, there exists $\epsilon_0=\epsilon_0(\hat{\delta}_0,n,\lambda,\Lambda, \mathrm{C}_g)<1$ such that, if
	$$
	\max\left\{\frac{|F(\mathrm{X}, x)-F^{\infty}(\mathrm{X}, x_0)|}{\|X\|+1}, \,\,\psi_{F^{\infty}}(x), \,\,\|f\|_{L^p(\mathrm{B}^+_1)\cap p-\text{BMO}(\mathrm{B}^+_1)}\right\} \le \epsilon_0,
	$$
	then any two $L^{p}-$viscosity solutions $u$ and $v$ of
	$$
	\left\{
	\begin{array}{rclcl}
		F(D^2 u , x) &=& f(x) & \mbox{in} & \mathrm{B}^+_1\\
		\beta \cdot Du &=& g(x) & \mbox{on} & \mathrm{T}_1 .
	\end{array}
	\right.
	\quad \textrm{and} \quad
	\left\{
	\begin{array}{rclcl}
		F^{\infty}(D^2 \mathfrak{h}, x_0) &=& 0 & \mbox{in} & \mathrm{B}^+_1\\
		\beta \cdot D\mathfrak{h} &=& g(x) & \mbox{on} &\mathrm{T}_1.
	\end{array}
	\right.
	$$
	satisfy
$$
\|u-\mathfrak{h}\|_{L^{\infty}\left(\mathrm{B}^+_\frac{7}{8}\right)} \le \hat{\delta}_0.
$$
\end{lemma}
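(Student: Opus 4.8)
The plan is to argue by contradiction through a compactness--stability scheme, in the spirit of the proof of Lemma~\ref{Approx}, except that $u$ is now compared against a solution of the frozen--coefficient equation for $F^{\infty}$ instead of $F^{\sharp}$. Suppose the statement fails. Then there are $\delta_0>0$, sequences of $\left(\frac{\lambda}{n},\Lambda\right)$-elliptic operators $(F_j)$ and $(F^{\infty}_j)$, frozen points $(x_0^{(j)})\subset\overline{\mathrm{T}_1}$, source terms $(f_j)$, boundary data $(g_j)\subset C^{1,\psi}(\overline{\mathrm{T}_1})$ with $\|g_j\|_{C^{1,\psi}(\overline{\mathrm{T}_1})}\le\mathrm{C}_g$, and vector fields $(\beta_j)\subset C^{1,\psi}(\overline{\mathrm{T}_1})$ with $\beta_j\cdot\overrightarrow{\textbf{n}}\ge\mu_0$, together with normalized $L^p$-viscosity solutions $u_j$ of $F_j(D^2u_j,x)=f_j$ in $\mathrm{B}^+_1$, $\beta_j\cdot Du_j=g_j$ on $\mathrm{T}_1$, and $\mathfrak h_j$ of $F^{\infty}_j(D^2\mathfrak h_j,x_0^{(j)})=0$ in $\mathrm{B}^+_1$, $\beta_j\cdot D\mathfrak h_j=g_j$ on $\mathrm{T}_1$ (with $\mathfrak h_j$ constructed, as in Lemma~\ref{Approx}, so as to share the trace of $u_j$ on $\partial\mathrm{B}^+_1\cap\mathbb{R}^n_+$), such that the three quantities
$$
\sup_{\mathrm{X}\in\textit{Sym}(n)}\frac{|F_j(\mathrm{X},x)-F^{\infty}_j(\mathrm{X},x_0^{(j)})|}{\|\mathrm{X}\|+1},\qquad \psi_{F^{\infty}_j}(x),\qquad \|f_j\|_{L^p(\mathrm{B}^+_1)\cap\,\textrm{p-BMO}(\mathrm{B}^+_1)}
$$
tend to $0$ as $j\to\infty$, yet $\|u_j-\mathfrak h_j\|_{L^\infty(\mathrm{B}^+_{7/8})}>\delta_0$ for every $j$.

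First I would collect the uniform \emph{a priori} estimates. Since $p\ge n$, H\"older's inequality gives $\|f_j\|_{L^n(\mathrm{B}^+_1)}\le|\mathrm{B}^+_1|^{\frac1n-\frac1p}\|f_j\|_{L^p(\mathrm{B}^+_1)}\to0$, and $u_j\in\mathcal{S}\!\left(\frac{\lambda}{n},\Lambda,f_j\right)$, $\mathfrak h_j\in\mathcal{S}\!\left(\frac{\lambda}{n},\Lambda,0\right)$ in $\mathrm{B}^+_1$ with oblique datum $g_j$ on $\mathrm{T}_1$; hence Theorem~\ref{Holder_Est} yields, for every $\rho\in(0,1)$, a uniform bound
$$
\|u_j\|_{C^{0,\alpha'}(\overline{\mathrm{B}^+_\rho})}+\|\mathfrak h_j\|_{C^{0,\alpha'}(\overline{\mathrm{B}^+_\rho})}\le\mathrm{C}\big(\rho,n,\lambda,\Lambda,\mu_0,\mathrm{C}_g\big),\qquad \alpha'=\alpha'(n,\lambda,\Lambda,\mu_0).
$$
By the normalization $F_j(\mathcal{O}_{n\times n},0,0,\cdot)\equiv0$, the families $\{F_j\}$ and $\{F^{\infty}_j(\cdot,x_0^{(j)})\}$ are locally Lipschitz in $\mathrm{X}$ (with constant $\Lambda$) and locally bounded; so, after passing to subsequences, Arzel\`{a}--Ascoli produces limits $F^{\infty}_j(\cdot,x_0^{(j)})\to F^{\infty}_\infty$ locally uniformly on $\textit{Sym}(n)$, $x_0^{(j)}\to x_0^\infty$, $\beta_j\to\beta_\infty$, $g_j\to g_\infty$ in $C^{1,\psi'}(\overline{\mathrm{T}_1})$ for $\psi'<\psi$, and $u_j\to u_\infty$, $\mathfrak h_j\to\mathfrak h_\infty$ locally uniformly in $\mathrm{B}^+_1\cup\mathrm{T}_1$, with $u_\infty=\mathfrak h_\infty$ on $\partial\mathrm{B}^+_1\cap\mathbb{R}^n_+$ in the limit.

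Next I would identify the limiting boundary value problems and derive a contradiction. For any $\varphi\in W^{2,p}(\mathrm{B}_r(y))$ with $\mathrm{B}_r(y)\subset\mathrm{B}^+_1$,
$$
|F_j(D^2\varphi,x)-f_j(x)-F^{\infty}_\infty(D^2\varphi,x_0^\infty)|\le \epsilon_j\big(1+\|D^2\varphi\|\big)+|f_j(x)|+\omega_j\!\left(\|D^2\varphi\|\right),
$$
where $\epsilon_j\to0$ is the modulus in the hypothesis and $\omega_j(\cdot):=|F^{\infty}_j(\cdot,x_0^{(j)})-F^{\infty}_\infty(\cdot,x_0^\infty)|\to0$ uniformly on compacts of $\textit{Sym}(n)$; the right-hand side therefore converges to $0$ in $L^p(\mathrm{B}_r(y))$. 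The Stability Lemma~\ref{Est}, together with the stability of the oblique condition under locally uniform convergence (cf.\ \cite{CCKS}, \cite{LiZhang}), then shows that $u_\infty$ is an $L^p$-viscosity solution of $F^{\infty}_\infty(D^2u_\infty,x_0^\infty)=0$ in $\mathrm{B}^+_{15/16}$ with $\beta_\infty\cdot Du_\infty=g_\infty$ on $\mathrm{T}_{15/16}$; the same argument (even simpler) gives that $\mathfrak h_\infty$ solves the same problem in $\mathrm{B}^+_{7/8}$ with Dirichlet trace $u_\infty$ on $\partial\mathrm{B}^+_{7/8}\cap\mathbb{R}^n_+$. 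Since $F^{\infty}_\infty$ obeys (A1), the comparison/uniqueness principle (Theorem~\ref{Unicidade}, via the A.B.P. Lemma~\ref{ABP-fullversion}) forces $u_\infty\equiv\mathfrak h_\infty$ on $\overline{\mathrm{B}^+_{7/8}}$. Quantitatively, as in Lemma~\ref{Approx}, setting $w_j:=u_\infty-\mathfrak h_j\in\mathcal{S}\!\left(\frac{\lambda}{n},\Lambda,0\right)$ on $\mathrm{B}^+_{7/8}$, with oblique datum $g_\infty-g_j$ and Dirichlet datum $u_\infty-u_j$ on the curved part, Lemma~\ref{ABP-fullversion} gives $\|w_j\|_{L^\infty(\mathrm{B}^+_{7/8})}\le\|u_\infty-u_j\|_{L^\infty(\partial\mathrm{B}^+_{7/8})}+\mathrm{C}(n,\lambda,\Lambda,\mu_0)\|g_\infty-g_j\|_{L^\infty(\mathrm{T}_{7/8})}\to0$, so $\mathfrak h_j\to u_\infty$ uniformly on $\overline{\mathrm{B}^+_{7/8}}$; since also $u_j\to u_\infty$ there, $\|u_j-\mathfrak h_j\|_{L^\infty(\mathrm{B}^+_{7/8})}\to0$, contradicting $\|u_j-\mathfrak h_j\|_{L^\infty(\mathrm{B}^+_{7/8})}>\delta_0$. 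The asserted dependence $\epsilon_0=\epsilon_0(\hat\delta_0,n,\lambda,\Lambda,\mathrm{C}_g)$ is then read off from this scheme.

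I expect the main obstacle to be the passage to the limit in the oblique boundary relation: only locally uniform convergence $u_j\to u_\infty$ is available --- not convergence of gradients --- so certifying that $u_\infty$ and $\mathfrak h_\infty$ still satisfy $\beta_\infty\cdot D(\cdot)=g_\infty$ on $\mathrm{T}_{15/16}$ in the viscosity sense genuinely requires the stability theory for viscosity solutions under oblique data (half-relaxed limits, as in \cite{CCKS} and \cite{LiZhang}), and not the interior Stability Lemma~\ref{Est} by itself. A secondary, essentially bookkeeping difficulty is to carry the moving data $(\beta_j,g_j,x_0^{(j)})$ along the argument while keeping all constants universal; this is routine once the uniform $C^{0,\alpha'}$ bounds and the Arzel\`{a}--Ascoli extractions above are in place.
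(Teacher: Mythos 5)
Your proposal is correct and follows essentially the same route as the paper, which proves this lemma simply by declaring it ``the same lines as Lemma \ref{Approx} with minor changes'': you reconstruct precisely that compactness--stability contradiction argument (uniform $C^{0,\alpha'}$ bounds from Theorem \ref{Holder_Est}, Arzel\`a--Ascoli, the Stability Lemma \ref{Est} to identify the limit equation, and the A.B.P. estimate of Lemma \ref{ABP-fullversion} applied to $u_\infty-\mathfrak h_j$), with $F^{\sharp}$ replaced by the frozen operator $F^{\infty}(\cdot,x_0)$. Your closing remarks --- that $\mathfrak h$ must be understood as sharing the Dirichlet trace of $u$ on the curved boundary, and that stability of the oblique condition under locally uniform convergence needs the boundary viscosity framework of \cite{CCKS}/\cite{LiZhang} rather than the interior lemma alone --- correctly identify the points the paper glosses over.
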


\begin{proof}
The proof includes the same lines as Lemma \ref{Approx} along with minor changes.
\end{proof}

\begin{lemma}[{\bf A quadratic approximation}] \label{BMO4}
	Under the assumptions from Lemma \ref{Prop-BMO}, assume that $F^{\infty}$ satisfies (A4)$^{\star}$. Let $u$ be an $L^p-$viscosity solution to
	$$
	\left\{
	\begin{array}{rclcl}
		F(D^2 u, x) &=& f(x) & \mbox{in} & \mathrm{B}^+_1,\\
		\beta \cdot Du &=& g(x)& \mbox{on} &  \mathrm{T}_1 .
	\end{array}
	\right.
	$$
	Then, there exist universal constants $\mathrm{C}_{\sharp}>0$ and $r_0 \in (0, 1/2)$, as well as a quadratic polynomial $\mathfrak{P}$, with $\|\mathfrak{P}\|_{L^{\infty}(\mathrm{B}^+_1)} \le \mathrm{C}_{\sharp}$ such that
	$$
\displaystyle	\sup_{\mathrm{B}^+_r} |u(x) - \mathfrak{P}(x)| \le r^2 \quad \text{for every} \quad r \le r_0
	$$
\end{lemma}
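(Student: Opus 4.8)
The plan is to run one step of the familiar ``compactness $+$ freezing $+$ Taylor expansion'' mechanism and then iterate it through a rescaling that preserves the hypotheses. First I would fix $\hat{\delta}_0>0$ (to be chosen at the end) and let $\epsilon_0$ be the corresponding smallness parameter of Lemma \ref{Prop-BMO}, applied with $F^{\infty}=F^{\sharp}$ and $x_0=0$. Since the oscillation of $F$ against $F^{\sharp}(\cdot,0)$, the quantity $\psi_{F^{\sharp}}$, and $\|f\|_{L^p\cap \textrm{p-BMO}}$ are all below $\epsilon_0$, Lemma \ref{Prop-BMO} produces a viscosity solution $\mathfrak{h}$ of $F^{\sharp}(D^2\mathfrak{h},0)=0$ in $\mathrm{B}^+_1$ with $\beta\cdot D\mathfrak{h}=g$ on $\mathrm{T}_1$ and $\|u-\mathfrak{h}\|_{L^{\infty}(\mathrm{B}^+_{7/8})}\le\hat{\delta}_0$; the A.B.P. estimate (Lemma \ref{ABP-fullversion}) controls $\|\mathfrak{h}\|_{L^{\infty}(\mathrm{B}^+_{7/8})}$ by a universal constant, so hypothesis $(A4)^{\star}$ gives $\mathfrak{h}\in C^{2,\psi}(\overline{\mathrm{B}^+_{3/4}})$ with a universal bound on $\|\mathfrak{h}\|_{C^{2,\psi}(\overline{\mathrm{B}^+_{3/4}})}$.

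Next I would let $\mathfrak{P}$ be the second-order Taylor polynomial of $\mathfrak{h}$ at the origin. The $C^{2,\psi}$ bound yields $\|\mathfrak{P}\|_{L^{\infty}(\mathrm{B}^+_1)}\le\mathrm{C}_{\sharp}$ for a universal $\mathrm{C}_{\sharp}$, together with
\[
\sup_{\mathrm{B}^+_r}|\mathfrak{h}-\mathfrak{P}|\ \le\ [D^2\mathfrak{h}]_{C^{0,\psi}(\mathrm{B}^+_{3/4})}\, r^{2+\psi}\ \le\ \mathrm{C}_{\ast}\, r^{2+\psi}\qquad \bigl(0<r\le \tfrac34\bigr).
\]
Combining this with the approximation bound gives $\sup_{\mathrm{B}^+_r}|u-\mathfrak{P}|\le \hat{\delta}_0+\mathrm{C}_{\ast}r^{2+\psi}$. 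Now I would pick $r_0\in(0,\tfrac12)$ so small that $\mathrm{C}_{\ast}r_0^{\psi}\le \tfrac12$ and then set $\hat{\delta}_0\defeq \tfrac12 r_0^2$, which retroactively fixes $\epsilon_0$. At $r=r_0$ this reads $\sup_{\mathrm{B}^+_{r_0}}|u-\mathfrak{P}|\le r_0^2$, i.e.\ the claim at the scale $r_0$.

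To reach the remaining scales I would rescale: since $F$ has no lower-order terms here, $u_1(x)\defeq r_0^{-2}(u-\mathfrak{P})(r_0 x)$ is normalized on $\mathrm{B}^+_1$ and solves $F_1(D^2u_1,x)=f_1(x)$ in $\mathrm{B}^+_1$, $\beta_1\cdot Du_1=g_1$ on $\mathrm{T}_1$, where $F_1(\mathrm{X},x)\defeq F(\mathrm{X}+D^2\mathfrak{P},r_0x)$, $f_1(x)\defeq f(r_0x)$, $\beta_1(x)\defeq \beta(r_0x)$ and $g_1(x')\defeq r_0^{-1}(g-\beta\cdot D\mathfrak{P})(r_0x')$. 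The hypotheses of the lemma are stable under this operation: $F_1^{\sharp}(\mathrm{X},0)=F^{\sharp}(\mathrm{X}+D^2\mathfrak{P},0)$ still satisfies $(A4)^{\star}$ (a fixed shift in the Hessian slot does not affect the a priori estimate), the oscillation $\psi_{F_1^{\sharp}}$ only shrinks, the $\textrm{p-BMO}$ seminorm of $f_1$ equals that of $f$ by scale invariance while $\|f_1\|_{L^p}$ decreases, and $g_1$ is in fact small in $C^{1,\psi}$ because $\beta\cdot D\mathfrak{P}$ agrees with $g$ on $\mathrm{T}_1$ up to the $C^{1,\psi}$ error of $D\mathfrak{h}$. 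Applying the one-step estimate to $u_1$ and unwinding the scaling corrects $u$ on $\mathrm{B}^+_{r_0^2}$; iterating and accumulating the correctors yields the stated control at every $r\le r_0$. The main obstacle will be exactly this persistence of smallness under rescaling: it rests on the scale invariance of the $\textrm{p-BMO}$ seminorm — the structural feature that makes the borderline case $f\in\textrm{p-BMO}$ tractable at all — together with the near-cancellation $\beta\cdot D\mathfrak{P}\approx g$ on the flat part of the boundary, which keeps the oblique datum of the rescaled problem admissible.
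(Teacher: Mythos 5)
Your first two paragraphs reproduce the paper's proof almost verbatim: one application of Lemma \ref{Prop-BMO} to produce $\mathfrak{h}$, the $C^{2,\psi}$ a priori estimate from (A4)$^{\star}$ to control the second-order Taylor polynomial $\mathfrak{P}$ of $\mathfrak{h}$, and the choice ``first $r_0$ small so that $\mathrm{C}_{\ast}r_0^{\psi}$ is small, then $\hat{\delta}_0\sim r_0^2$'' (the paper uses the constants $\tfrac1{10}$ instead of $\tfrac12$). Up to that point the argument is correct and is exactly the paper's.

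The third paragraph, however, is both unnecessary and flawed. It is unnecessary because the way this lemma is used downstream (Corollary \ref{BMO3} and the induction in the proof of Theorem \ref{BMO}) only ever invokes the estimate at the single fixed scale $r$; the paper's own proof likewise only establishes the inequality at one scale. It is flawed because the rescaling-and-iteration scheme produces a \emph{new} corrector at each step: you obtain polynomials $\mathfrak{P}_k$ with $\sup_{\mathrm{B}^+_{r_0^k}}|u-\mathfrak{P}_k|\le r_0^{2k}$, but the Hessian coefficients of the successive correctors satisfy only $|\mathcal{A}_{k+1}-\mathcal{A}_k|\le \mathrm{C}_{\sharp}$ (this is precisely \eqref{proof-uc-eq05} in the proof of Theorem \ref{BMO}), so they need not converge — indeed, in the $\textrm{p-BMO}$ regime they generically do not, which is the whole reason the conclusion of Theorem \ref{BMO} is a BMO bound rather than a $C^{2,\psi}$ one. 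Hence ``accumulating the correctors'' cannot produce a \emph{single} quadratic polynomial valid for every $r\le r_0$; at scale $r_0^k$ the accumulated quadratic part may deviate from any fixed $\mathfrak{P}$ by an amount of order $k\,r_0^{2k}$, which violates the claimed bound $r^2$. The honest reading is that the lemma (in the paper and in your proof) holds at the one chosen scale, which is all that is needed.
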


\begin{proof}
	Firstly, take $\hat{\delta}_0 \in (0,1)$, to be chosen \textit{a posteriori}, and apply Lemma \ref{Prop-BMO} to obtain $\epsilon_0 >0$ and an $L^p-$viscosity solution to
	$$
	\left\{
	\begin{array}{rclcl}
		F^{\infty}(D^2 \mathfrak{h}, x) &=& 0 & \mbox{in} & \mathrm{B}^+_1,\\
		\beta \cdot D\mathfrak{h} &=&g(x)& \mbox{on} &  \mathrm{T}_1.
	\end{array}
	\right.
	$$
	such that
$$\displaystyle \sup_{\mathrm{B}^+_1} |u-\mathfrak{h}| \le \hat{\delta}_0.
$$
Remember that $F^{\infty}$ fulfills assumption (A4)$^{\star}$, then $\mathfrak{h}$ enjoys classical estimates. As a result, its Taylor's expansion of second order, namely $\mathfrak{P}$, is well-defined. Furthermore, we have
	$$
	\sup_{\mathrm{B}^+_{\rho}} |\mathfrak{h}-\mathfrak{P}| \le \mathrm{C}_{\ast} \rho^{2+\psi} \quad \forall \,\, \rho\leq r_0.
	$$
	Now, we select $0<r \ll 1$ (small enough) in such a way that
$$
r \le \min\left\{r_0, \left(\frac{1}{10\mathrm{C}_{\ast}}\right)^{\frac{1}{\psi}}\right\}.
$$
Thus,
$$
\displaystyle \sup_{\mathrm{B}^+_r} |\mathfrak{h}-\mathfrak{P}| \le \frac{1}{10}r^2.
$$
On the other hand, we select $\hat{\delta}_0 \defeq \frac{1}{10}r^2$. As a result, we obtain
$$
	\displaystyle \sup_{\mathrm{B}^+_r} |u- \mathfrak{h}| \le \frac{1}{10}r^2.
$$
Finally, by combining the previous inequalities, we conclude that
	$$
	\sup_{\mathrm{B}^+_r} |u-\mathfrak{P}| \le r^2.
	$$
\end{proof}

As a byproduct of the above result, we can yield a quadratic approximation at the recession level. The proof holds along with straightforward modifications in the Lemma \ref{BMO4}. For this reason, we will omit it here.

\begin{corollary} \label{BMO3}
	Assume the assumptions from Lemma \ref{Prop-BMO}. Then, there exist universal constants $\mathrm{C}_{\sharp}>0$, $\tau_0 >0$ and $r>0$, such that if $u$ is a (normalized) viscosity solution of
	$$
	\left\{
	\begin{array}{rclcl}
		F_{\tau}(D^2 u, x) &=& f(x) & \mbox{in} & \mathrm{B}^+_1,\\
		\beta(x) \cdot Du(x)  &=& g(x) & \mbox{on} & \mathrm{T}_1.
	\end{array}
	\right.
	$$
	with $\max\left\{\tau, \,\|f\|_{p-BMO(\mathrm{B}^+_1)}\right\} \le \tau_0$, there exists a quadratic polynomial $\mathfrak{P}$, with $\|\mathfrak{P}\|_{\infty} \le \mathrm{C}_{\sharp}$ satisfying
	$$
	\sup_{\mathrm{B}^+_r} |u(x)-\mathfrak{P}(x)| \le r^2.
	$$
\end{corollary}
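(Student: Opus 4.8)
The plan is to reproduce the proof of Lemma \ref{BMO4} almost line by line, with $F_{\tau}$ in the role of $F$, the only genuine addition being that the proximity hypothesis required by the Approximation Lemma II (Lemma \ref{Prop-BMO}) is now \emph{generated} from the pointwise convergence $F_{\tau} \to F^{\sharp}$ rather than assumed outright. Concretely, fix $\hat{\delta}_0 \in (0,1)$ — to be pinned down at the end, exactly as in Lemma \ref{BMO4} — and let $\epsilon_0 = \epsilon_0(\hat{\delta}_0, n, \lambda, \Lambda, \mathrm{C}_g) < 1$ be the constant supplied by Lemma \ref{Prop-BMO}. Take as frozen recession profile $F^{\infty}(\mathrm{X}) \defeq F^{\sharp}(\mathrm{X}, 0, 0, 0)$; it is $(\lambda, \Lambda)$-elliptic, hence $\left(\frac{\lambda}{n}, \Lambda\right)$-elliptic, and, being the recession operator evaluated at a point, it satisfies assumption $(A4)^{\star}$.

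Next I would choose $\tau_0$. Because $F_{\tau}(\mathrm{X}, x) = \tau F(\tau^{-1}\mathrm{X}, 0, 0, x)$, the convergence lemma for $F_{\tau} \to F^{\sharp}$ from Section \ref{section2} yields $\tau_0^{\prime} = \tau_0^{\prime}(\lambda, \Lambda, \epsilon_0, \psi_{F^{\sharp}}) > 0$ such that
$$
\frac{\left|F_{\tau}(\mathrm{X}, x) - F^{\sharp}(\mathrm{X}, 0, 0, x)\right|}{1 + \|\mathrm{X}\|} \leq \frac{\epsilon_0}{2} \qquad \text{whenever } \tau \leq \tau_0^{\prime},\ \mathrm{X} \in \textrm{Sym}(n),\ x \in \mathrm{B}^{+}_1 .
$$
Combining this with the control of the oscillation of $F^{\sharp}$ about the origin furnished by $(A3)$ — which, after the normalization of $\theta_0, r_0$ incorporated into the standing hypotheses and, if necessary, a localization to a small inner ball followed by rescaling, gives $\psi_{F^{\sharp}}(\cdot, 0) \leq \epsilon_0/2$ in the relevant $L^p$-averaged sense on $\mathrm{B}^{+}_1$ — the triangle inequality produces $\frac{|F_{\tau}(\mathrm{X}, x) - F^{\infty}(\mathrm{X})|}{1 + \|\mathrm{X}\|} \leq \epsilon_0$. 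As for the source term, Lemma \ref{BMO4} additionally asks for smallness of $\|f\|_{L^p(\mathrm{B}^{+}_1)}$: as is usual at the $D^2 u$ scale, this is either inherited from the rescaled configuration in which the Corollary is applied (the exponent $2 - \frac{n}{p} > 0$ makes $\|f\|_{L^p}$ over small balls negligible) or obtained by absorbing the mean $(f)_{0,1}$ into $F_{\tau}$ and $F^{\infty}$ — an operation that leaves the recession $F^{\sharp}$ unchanged — together with $\|f - (f)_{0,1}\|_{L^p(\mathrm{B}^{+}_1)} \leq \mathrm{C}(n)\|f\|_{\textrm{p-BMO}(\mathrm{B}^{+}_1)}$. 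Setting $\tau_0 \defeq \min\{\tau_0^{\prime}, \epsilon_0/\mathrm{C}(n)\}$, all of the hypotheses of Lemma \ref{BMO4} are then in force.

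At this point Lemma \ref{BMO4} applies directly and delivers universal $\mathrm{C}_{\sharp} > 0$, a radius $r = r(n, \lambda, \Lambda, p, \mathrm{C}_{\ast}, \psi) \in (0, 1/2)$ and a quadratic polynomial $\mathfrak{P}$ with $\|\mathfrak{P}\|_{L^{\infty}(\mathrm{B}^{+}_1)} \leq \mathrm{C}_{\sharp}$ such that $\sup_{\mathrm{B}^{+}_r}|u - \mathfrak{P}| \leq r^2$; the choices $r \leq \min\{r_0, (10\mathrm{C}_{\ast})^{-1/\psi}\}$ and $\hat{\delta}_0 \defeq \frac{1}{10}r^2$ are the same as inside the proof of Lemma \ref{BMO4}. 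I expect the only real obstacle to be the bookkeeping of the previous paragraph: Lemma \ref{Prop-BMO} wants a \emph{pointwise-in-}$\mathrm{X}$ proximity bound that still encodes the variable-coefficient oscillation $\psi_{F^{\sharp}}$, while $(A3)$ provides only \emph{averaged} smallness, so matching the two requires the same localization-plus-rescaling device already exploited in Lemma \ref{Approx}; one should also verify that absorbing $(f)_{0,1}$ into $F_{\tau}$ preserves uniform ellipticity (it does, since adding a function of $x$ alone respects $(A1)$) and that it does not spoil $(A4)^{\star}$ for the frozen profile (a constant right-hand side is removed by subtracting a fixed quadratic, using the normalization $F^{\sharp}(\mathcal{O}_{n \times n}, 0, 0, x) = 0$).
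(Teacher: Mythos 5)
Your proposal is correct and follows exactly the route the paper intends: the paper itself omits the proof of Corollary \ref{BMO3}, stating only that it is Lemma \ref{BMO4} with straightforward modifications, and your argument supplies precisely those modifications (deriving the proximity hypothesis of Lemma \ref{Prop-BMO} from the convergence $F_{\tau}\to F^{\sharp}$ for $\tau\le\tau_0$ together with (A3), and replacing $f$ by $f-(f)_{0,1}$ to pass from $p$-BMO smallness to $L^p$ smallness, consistent with the fact that the limiting profiles in the proof of Theorem \ref{BMO} solve $F^{\sharp}(\mathcal{A}_k,x)=(\tilde f)_1$).
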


We are already prepared to prove Theorem \ref{BMO} in the face of such above auxiliary results.

\begin{proof}[{\bf Proof of Theorem \ref{BMO}}]
In fact, for $\kappa \in (0,1)$ to be determined \textit{a posteriori}, we define $v(x) \defeq  \kappa u(x)$ such that $v$ is a (normalized) $L^p-$viscosity solution to
	$$
	\left\{
	\begin{array}{rclcl}
		F_{\tau}(D^2 v, x) &=& \tilde{f}(x) & \mbox{in} & \mathrm{B}^+_1,\\
		\beta(x) \cdot Dv(x) &=& \tilde{g}(x) &\mbox{on} & \mathrm{T}_1.
	\end{array}
	\right.
	$$
	where $\tau \defeq \kappa$, $\tilde{f}(x) \defeq \kappa f(x)$ and $\tilde{g}(x) \defeq \kappa g(x)$. Now, we are going to determine $\kappa$. This will be done in such a way that $\max\left\{\tau, \|\tilde{f}\|_{\textrm{p-BMO}(\mathrm{B}^+_1)}\right\} \le \tau_0$, where $\tau_0$ comes from Corollary \ref{BMO3}. Finally, we will establish the result for $v$, which will lead to the Theorem's statement.
	
From now on, we will show that there exists a sequence of second order polynomials $(\mathfrak{P}_k)_{k \in \mathbb{N}}$ given by $\mathfrak{P}_k(x)= \frac{1}{2} x^T \cdot \mathcal{A}_k \cdot x + \mathcal{B}_k \cdot x + \mathcal{C}_k$ such that
	\begin{eqnarray}	
		F^{\sharp}(\mathcal{A}_k, x) & = & (\tilde{f} )_1, \label{proof-uc-eq01} \\
		r^{2(k-1)} |\mathcal{A}_{k} - \mathcal{A}_{k-1}| + r^{k-1} |\mathcal{B}_{k-1} - \mathcal{B}_{k}| + |\mathcal{C}_{k-1}-\mathcal{C}_k| & \le & \mathrm{C}_{\sharp}.r^{2(k-1)},\label{proof-uc-eq05}
	\end{eqnarray}	
	where $\mathrm{C}_{\sharp}>0$ is a universal constant, $0<r\ll 1$ comes from Lemma \ref{BMO4} and
	$$
		\sup_{\mathrm{B}^+_{r^k}} \left|v(x) - \mathfrak{P}_k(x)\right| \le  r^{2k}.
	$$
	The proof will follow by way of an induction process. Let us define $\mathfrak{P}_0$ and $\mathfrak{P}_{-1}$ to be $\mathfrak{P}_{0}(x) = \mathfrak{P}_{-1}(x) = \frac{1}{2} x^T \cdot \mathrm{X}_0 \cdot x$, where $\mathrm{X}_0 \in \textit{Sym}(n)$ fulfills $F^{\sharp}(\mathrm{X}_0, x) = ( \tilde{f})_1$.
	
	The first step of the argument, i.e., the case $k=0$, is naturally verified. Now, suppose we have established the existence of such polynomials for $k=0,1,\ldots, j$. Then, we define the following auxiliary function $v_j: \mathrm{B}^+_1 \rightarrow \mathbb{R}$ given by
	$$
	v_{j}(x) \defeq \frac{(v-\mathfrak{P}_j)(r^j x)}{r^{2j}},
	$$
	where we have by the induction assumption that $v_j$ is a (normalized) $L^p-$viscosity solution to
	$$
	\left\{
	\begin{array}{rclcl}
		F_{j}(D^2 v_j, x) &=&\tilde{f}_j(x) & \mbox{in} & \mathrm{B}^+_1\\
		\tilde{\beta}(x) \cdot Dv_j(x) &=& \tilde{g}_{j}(x)  & \mbox{on} & \mathrm{T}_{1}.
	\end{array}
	\right.
	$$
	where
$$
\left\{
\begin{array}{rcl}
  F_j(\mathrm{X}, x) & \defeq & \tau F \left(\frac{1}{\tau}(\mathrm{X} + \mathcal{A}_j), r^jx)\right) \\
  \tilde{f}_j(x) & \defeq & \tilde{f}(r^j x) \\
  \tilde{g}_{j}(x)  & \defeq & \tilde{g}(r^{j}x)-\frac{1}{r^{j}}\tilde{\beta}(x) \cdot D\mathfrak{P}_j(r^{j}x)\\
  \tilde{\beta}(x) & \defeq & r^{j} \beta(r^{j}x)
\end{array}
\right.	
$$
	with
	$$
	\begin{array}{lll}
		\|f_j\|_{\textrm{p-BMO}(\mathrm{B}^{+}_1)} =   \displaystyle \sup_{0<s\leq 1} \left( \intav{\mathrm{B}^{+}_{s}} \left|f_j(x) - (f_j)_{s}\right|^pdx\right)^{\frac{1}{p}}\\
		\hspace{2.35 cm}= \displaystyle \sup_{0<s\leq 1} \left( \intav{\mathrm{B}^{+}_{sr}} \left|f(z) - (f)_{sr}\right|^pdz\right)^{\frac{1}{p}} \\
		\hspace{2.35 cm} \leq \|\tilde{f}\|_{p-\textrm{BMO}(\mathrm{B}^{+}_1)} \\
		\hspace{2.35 cm}\leq \tau_0.
	\end{array}
	$$
	Furthermore, since $F^{\sharp}(\mathcal{A}_j, x)= ( \tilde{f} )_1$, the PDE with oblique boundary conditions
	$$
	\left\{
	\begin{array}{rclcl}
		F^{\sharp}_{j}(D^2 \mathfrak{h}, x) &=& ( \tilde{f} )_1 & \mbox{in} & \mathrm{B}^+_1\\
		\beta \cdot D \mathfrak{h}(x) & = & \tilde{g}(x) &\mbox{on} &  \mathrm{T}_1
	\end{array}
	\right.
	$$
	satisfies the same (up to the boundary) $C^{2,\psi}$ \textit{a priori} estimates from the problem driven by $F^{\sharp}$, and it is under the assumption of Corollary \ref{BMO3}. Thus, there exists a quadratic polynomial $\tilde{\mathfrak{P}}$ with $\|\tilde{\mathfrak{P}}\|_{\infty} \le \mathrm{C}_{\sharp}$ such that
	\begin{equation}\label{BMO5}
		\sup_{\mathrm{B}^+_r} |v_{j}-\tilde{\mathfrak{P}}| \le r^2.
	\end{equation}
	Rewriting \eqref{BMO5} back to the original domain yields
	$$
	\displaystyle \sup_{\mathrm{B}^{+}_{r^{k+1}}} \left|v(x) - \left[\mathfrak{P}_k(x)+ r^{2k}\tilde{\mathfrak{P}}\left(\frac{x}{r^k}\right)\right]\right|\leq r^{2(k+1)}.
	$$

	Finally, by defining $\mathfrak{P}_{j+1}(x) \defeq  \mathfrak{P}_j(x) + r^{2j} \tilde{\mathfrak{P}}(r^{-j}x)$ we check the $(k+1)^{\underline{th}}$ step of induction. Furthermore, the required conditions \eqref{proof-uc-eq01} and \eqref{proof-uc-eq05} are satisfied. In conclusion, for $\rho>0$, choose an integer $k$  such that $r^{k+1} < \rho \le r^{k}$. Then, using Theorem \ref{T1} to $v_k$, we obtain
	$$
\begin{array}{rcl}
  \displaystyle \sup_{\rho \in (0, 1/2)}\left(\intav{\mathrm{B}^+_{\rho} } |D^2 v(z) - \mathcal{A}_k|^p dz\right)^{\frac{1}{p}} & \le &  \displaystyle \sup_{\rho \in (0, 1/2)} \left(\frac{1}{r^n}.\intav{\mathrm{B}^+_{r^k} } |D^2 v(z) - \mathcal{A}_k|^p dz\right)^{\frac{1}{p}}\\
   & = &  \displaystyle \sup_{\rho \in (0, 1/2)} \left(\frac{1}{r^n}.\int_{\mathrm{B}^+_1 } |D^2 v_k|^p dx\right)^{\frac{1}{p}} \\
   & \le & \mathrm{C}(\verb"universal").
\end{array}
	$$

Now, recall the general inequality:
$$
\displaystyle \intav{\mathrm{B}^{+}_{\rho} } \left|D^2 v - \intav{\mathrm{B}^{+}_{\rho}} D^2 v \ dy\right|^p dx \le   2^{p} \displaystyle  \intav{\mathrm{B}^{+}_{\rho} } |D^2 v - \mathcal{A}_k|^p dx.
$$
In effect, by triangular inequality,
$$
\begin{array}{rcl}
	\displaystyle \left(\intav{\mathrm{B}^{+}_{\rho} } \left|D^2 v - \intav{\mathrm{B}^{+}_{\rho}} D^2 v \ dy\right|^p dx\right)^{\frac{1}{p}} & \le &  \displaystyle  \left(\intav{\mathrm{B}^{+}_{\rho} } |D^2 v - \mathcal{A}_k|^p dx\right)^{\frac{1}{p}}+\left|\intav{\mathrm{B}^{+}_{\rho} } D^2 v \ dy - \mathcal{A}_k\right|\\
	& \le &   \displaystyle  \left(\intav{\mathrm{B}^{+}_{\rho} } |D^2 v - \mathcal{A}_k|^p dx\right)^{\frac{1}{p}}+  \displaystyle  \intav{\mathrm{B}^{+}_{\rho} } |D^2 v - \mathcal{A}_k| dx\\
	& \le & 2 \displaystyle  \left(\intav{\mathrm{B}^{+}_{\rho} } |D^2 v - \mathcal{A}_k|^p dx\right)^{\frac{1}{p}},
\end{array}
$$

Therefore,
$$
\displaystyle \|Dv\|_{p-BMO\left(\overline{\mathrm{B}^{+}_{\frac{1}{2}}}\right)} \defeq \sup_{\rho \in (0, 1/2)}\left(\intav{\mathrm{B}^{+}_{\rho} } \left|D^2 v - \intav{\mathrm{B}^{+}_{\rho}} D^2 v \ dy\right|^p dx\right)^{\frac{1}{p}} \leq \mathrm{C}(\verb"universal"),
$$
thereby finishing the proof of the estimate \eqref{BMO2}.
\end{proof}


\section{Obstacle-type problems: Proof of Theorem \ref{T3}}\label{A1} \label{obst}

\hspace{0.4cm} In this section, we will look at an important class of free boundary problems, namely the obstacle problem with oblique boundary datum \eqref{obss1} for a given obstacle $\phi \in W^{2,p}(\Omega)$ satisfying $\mathcal{B}(x,u, D \phi) \ge g$ a.e. on $\partial \Omega$, and $F$ a uniformly elliptic operator fulfilling $F(\mathcal{O}_{n \times n},\overrightarrow{0},0,x)=0$. The primary purpose will be to develop a $W^{2,p}$-regularity theory for \eqref{obss1} that does not rely on any extra regularity assumptions on nonlinearity $F$.

In the sequel, similarly to \cite[Theorem 3.3]{BLOP18} and \cite[Appendix]{daSV21-1} we will reduce the analysis of the obstacle problem eqrefobss1 in studying a family of penalized problems as follows:
\begin{equation} \label{NN4-4}
		\left\{
		\begin{array}{rclcl}
			F(D^2u_{\varepsilon},Du_{\varepsilon},u_{\varepsilon},x) &=& \mathrm{h}^+(x) \Psi_{\varepsilon}(u_{\varepsilon} - \phi) + f(x) - \mathrm{h}^+(x)& \mbox{in} & \Omega \\
			 \mathcal{B}(x,u_{\varepsilon},Du_{\varepsilon}) &=& g(x)  & \mbox{on} &\partial \Omega,
		\end{array}
		\right.
	\end{equation}
for $\varepsilon \in (0, 1)$, where $\Psi_{\varepsilon}(s)$ is a smooth function such that
$$
	\Psi_{\varepsilon}(s) \equiv 0 \quad \textrm{if} \quad s \le 0; \quad \Psi_{\varepsilon}(s) \equiv 1 \quad \textrm{if} \quad s \ge \varepsilon,
$$
$$
	0 \le \Psi_{\varepsilon}(s) \le 1 \quad \textrm{for any} \quad s \in \mathbb{R}.
$$
and
$$
\mathrm{h}(x) \defeq f(x) - F(D^2 \phi, D \phi, \phi, x).
$$

Now, we define $\hat{f}_{u_{\varepsilon}}(x) \defeq \mathrm{h}^+(x) \Psi_{\varepsilon}(u_{\varepsilon} - \phi) + f(x) - \mathrm{h}^+(x)$. Then, note that, $\mathrm{h} \in L^p(\Omega)$ with the estimate
\begin{eqnarray}\label{5.4}
	\|\mathrm{h}\|_{L^p(\Omega)} &\le& \|f\|_{L^p(\Omega)} + \|F(D^2 \phi, D \phi, \phi, x) \|_{L^p(\Omega)}\nonumber \\
	&\le& \mathrm{C} \cdot \left( \|f\|_{L^p(\Omega)} + \|\phi\|_{W^{2,p}(\Omega)}\right)
\end{eqnarray}
for some $\mathrm{C}=\mathrm{C}(n,\lambda,\Lambda, \sigma, \xi, p )>0$, since $F$ fulfills (A1). In a consequence, we have
\begin{equation}\label{4.3}
  \|\hat{f}_{u_{\varepsilon}}\|_{L^p(\Omega)} \leq \mathrm{C}(n,\lambda,\Lambda, \sigma, \xi, p )\left( \|f\|_{L^p(\Omega)} + \|\phi\|_{W^{2,p}(\Omega)}\right)
\end{equation}

Next, we claim that the problem \eqref{NN4-4} admits a viscosity solution that enjoys \textit{a priori} estimates.  In effect, according to Perron's Method (see Lieberman's Book \cite[Theorem 7.19]{Leiberman}) it follows that for each $v_0 \in L^p(\Omega)$ fixed, there exists a unique viscosity solution $u_{\varepsilon} \in W^{2,p}(\Omega)$ satisfying in the viscosity sense
$$
		\left\{
		\begin{array}{rclcl}
			F(D^2u_{\varepsilon},Du_{\varepsilon},u_{\varepsilon},x) &=& \mathrm{h}^+(x) \Psi_{\varepsilon}(v_0 - \phi) + f(x) - \mathrm{h}^+(x)& \mbox{in} & \Omega \\
			\mathcal{B}(x,u_{\varepsilon},Du_{\varepsilon})&=& g(x)  & \mbox{on} &\partial \Omega,
		\end{array}
		\right.
$$
fulfilling (due to Theorem \ref{T1}) the estimate
\begin{equation}\label{Eq_Est_Obst}
  \|u_{\varepsilon}\|_{W^{2,p}(\Omega)}  \le  \mathrm{C} \cdot \left(\|u_{\varepsilon}\|_{L^{\infty}(\Omega)} + \|\hat{f}_{v_0}\|_{L^p(\Omega)}+\|g\|_{C^{1,\alpha}(\partial \Omega)}\right)
\end{equation}
for some $\mathrm{C}= \mathrm{C}(n,\lambda,\Lambda, p, \mu_0, \sigma, \omega, \|\beta\|_{C^2(\partial \Omega)}, \|\gamma\|_{C^2(\partial \Omega)}, \theta_0, \textrm{diam}(\Omega))>0$. Moreover, as in \eqref{4.3}, we have $\hat{f}_{v_0} \in L^p(\Omega)$.

Now, from A.B.P. Maximum Principle (Lemma \ref{ABP-fullversion}) we obtain
\begin{equation}\label{Eq_Est_Obst}
  \begin{array}{rcl}
  \|u_{\varepsilon}\|_{W^{2,p}(\Omega)} & \le & \mathrm{C} \cdot \left(\|\hat{f}_{v_0}\|_{L^p(\Omega)}+\|g\|_{C^{1,\alpha}(\partial \Omega)}\right)\\
&\le& \mathrm{C} \cdot\left( \|f\|_{L^p(\Omega)} + \|h\|_{L^p(\Omega)} + \|g\|_{C^{1,\alpha}(\partial \Omega)}\right)\\
&\le& \hat{\mathrm{C}}\cdot \left(\|f\|_{L^p(\Omega)} + \|g\|_{C^{1,\alpha}(\partial \Omega)} + \|\phi\|_{W^{2,p}(\Omega)} \right).
\end{array}
\end{equation}
Thus,
$$
	\|u_{\varepsilon}\|_{W^{2,p}(\Omega)} \le \mathrm{C}_0(\hat{\mathrm{C}}, n,\lambda,\Lambda,p,\mu_0, \sigma, \omega, \|\beta\|_{C^2(\partial \Omega)}, \theta_0, \textrm{diam}(\Omega), \|f\|_{L^p(\Omega)}, \|g\|_{C^{1,\alpha}(\partial \Omega)}, \|\phi\|_{W^{2,p}(\Omega)}),
$$
where $\mathrm{C}_0>0$ is independent on $v_0$.

At this point, by defining the operator $\mathcal{T}: L^p(\Omega) \rightarrow W^{2,p}(\Omega) \subset L^p(\Omega)$ given by $\mathcal{T}(v_0)=u_{\varepsilon}$, we conclude that $\mathcal{T}$ maps the $\mathrm{C}_0$-ball (in $L^p(\Omega)$) into itself. Hence, $\mathcal{T}$ is a compact operator. Therefore, by Schauder's fixed point theorem, there exists $u_{\varepsilon}$ such that $\mathcal{T}(u_{\varepsilon})=u_{\varepsilon}$, which is a viscosity solution to \eqref{NN4-4}.

Finally, we are in a position to establish our main result.

\begin{proof}[{\bf Proof of Theorem \ref{T3}}]

From \eqref{Eq_Est_Obst} we observe that
$$
	\|u_{\varepsilon}\|_{W^{2,p}(\Omega)} \le \mathrm{C}\cdot \left( \|f\|_{L^p(\Omega)} + \|g\|_{C^{1,\alpha}(\partial \Omega)} + \|\phi\|_{W^{2,p}(\Omega)}\right)
$$
for some $\mathrm{C}=\mathrm{C}(n,\lambda, \Lambda, p, \mu_0, \sigma, \xi, \|\beta\|_{C^2(\partial \Omega)}, \textrm{diam}(\Omega), \theta_0)$.  This ensures that $\{u_{\varepsilon}\}_{\varepsilon >0}$ is uniformly bounded in $W^{2,p}(\Omega)$. Hence, by standard compactness arguments, we can find a subsequence $\{u_{\varepsilon_j}\}_{j \in \mathbb{N}}$ with $\varepsilon_j \to 0$ and a function $u_{\infty} \in W^{2,p}(\Omega)$ such that
$$
\left\{
\begin{array}{rcl}
  u_{\varepsilon_j} \rightharpoonup u_{\infty} & \text{in} & W^{2, p}(\Omega)\\
 u_{\varepsilon_j} \to u_{\infty} & \text{in}& C^{0,\alpha_0}(\overline{\Omega})\\
 u_{\varepsilon_j} \to u_{\infty} & \text{in}& C^{1,1-\frac{n}{p}}(\overline{\Omega})
\end{array}
\right.
$$
for some universal $\alpha_0 \in (0,1)$.

Now, we assert that $u_{\infty}$ is a viscosity solution of \eqref{obss1}. In effect:

\begin{enumerate}
  \item[\checkmark] Since  $\beta(x) \cdot Du_{\varepsilon_j}(x) + \gamma(x) u_{\varepsilon_j}(x) =g(x)$ and $\{u_{\varepsilon_j}\}_{j \in \mathbb{N}}$ are uniformly bounded and equi-continuous on $\partial \Omega$, then from \eqref{Eq_Est_Obst} and Morrey embedding $W^{2, p}(\Omega) \hookrightarrow C^{1, 1-\frac{n}{p}}$, we have
$$
	\beta(x) \cdot Du_{\infty}(x) + \gamma(x) u_{\infty}(x)= g(x) \quad \textrm{on} \quad \partial \Omega
$$
in the viscosity sense (and point-wisely).
  \item[\checkmark]  On the other hand, from \eqref{NN4-4}, we have
\begin{eqnarray*}
	F(D^2 u_{\varepsilon_j}, Du_{\varepsilon_j}, u_{\varepsilon_j},x) &=& \mathrm{h}^+(x) \Psi_{\varepsilon_j}(u_{\varepsilon_j} - \phi) + f(x) - \mathrm{h}^+(x)\\
	& \le& f(x) \quad \textrm{in} \quad \Omega \quad \text{for each} \quad  j \in \mathbb{N}.
\end{eqnarray*}
Thus, by applying Stability results (Lemma \ref{Est}), by passing to the limit $j \to +\infty$, we can conclude
$$
	F(D^2 u_{\infty}, Du_{\infty}, u_{\infty}, x) \le f \quad \textrm{in} \quad \Omega.
$$

  \item[\checkmark] Next, we are going to prove that
$$
	u_{\infty} \ge \phi \quad \textrm{in} \quad \overline{\Omega}.
$$
Firstly, we see that $\Psi_{\varepsilon_j}(u_{\varepsilon_j} - \phi) \equiv 0$ on the set $\mathcal{O}_j \defeq \{x \in \overline{\Omega} \, : \, u_{\varepsilon_j}(x) < \phi(x)\}$.  Note that, if $\mathcal{O}_j = \emptyset$, we have nothing to prove. Thus, we suppose that $\mathcal{O}_j \not= \emptyset$. Then,
$$
	F(D^2 u_{\varepsilon_j}, Du_{\varepsilon_j}, u_{\varepsilon_j},x) = f(x) - \mathrm{h}^+(x) \quad \textrm{for} \,\,\, x \in \mathcal{O}_j.
$$
Now, for each $j \in \mathbb{N}$  note that $\mathcal{O}_j$ is relatively open in $\overline{\Omega}$, since $u_{\varepsilon_j} \in C^0(\overline{\Omega})$. Furthermore, from definition of $\mathrm{h}$ we have
$$
F(D^2 \phi, D \phi, \phi, x ) = f(x)-\mathrm{h}(x) \ge F(D^2 u_{\varepsilon_j}, Du_{\varepsilon_j}, u_{\varepsilon_j}, x) \,\,\, \textrm{in} \,\,\, \mathcal{O}_j
$$
Moreover, we have $u_{\varepsilon_j} = \phi$ on $\partial \mathcal{O}_j \setminus \partial \Omega$. Thus, from the Comparison Principle (see \cite[Theorem 2.10]{CCKS} and \cite[Theorem 7.17]{Leiberman}) we conclude that $u_{\varepsilon_j} \ge \phi$ in $\mathcal{O}_j$,  which yields a contradiction. Therefore, $\mathcal{O}_j = \emptyset$ and $u_{\infty} \ge \phi$ in $\overline{\Omega}$.

  \item[\checkmark] Finally, it remains to show that
$$
	F(D^2 u_{\infty}, Du_{\infty}, u_{\infty} ,x) = f(x) \,\,\,\, \textrm{in} \,\,\,\, \{ u_{\infty} > \phi \}
$$
in the viscosity sense. For this end, for each $ k \in \mathbb{N}$ we notice that
$$
	\mathrm{h}^+(x)\Psi_{\varepsilon_j}(u_{\varepsilon_j} -\phi) + f(x)-\mathrm{h}^{+}(x) \to f(x) \,\,\, \textrm{a.e. in} \,\,\, \left\{x \in \Omega \, : \, u_{\infty}(x) > \phi(x) + \frac{1}{k} \right\} .
$$
Therefore, via Stability results (Lemma \ref{Est}) we conclude (in the viscosity sense) that
$$
	F(D^2 u_{\infty}, Du_{\infty}, u_{\infty} ,x) = f(x) \,\,\, \textrm{in} \,\,\, \{u_{\infty} > \phi\} = \bigcup_{k=1}^{\infty} \left\{ u_{\infty} > \phi + \frac{1}{k}\right\} \quad \text{as} \quad  j \to +\infty,
$$
thereby proving the claim.
\end{enumerate}

Finally, from \eqref{Eq_Est_Obst} $u_{\infty}$ satisfies the following regularity estimate
{\scriptsize{
\begin{eqnarray*}
	\|u_{\infty}\|_{W^{2,p}(\Omega)} &\le& \liminf_{j \to +\infty} \|u_{\varepsilon_j}\|_{W^{2,p}(\Omega)} \\ &\le& \mathrm{C}(n,\lambda, \Lambda, p, \mu_0, \sigma, \omega, \|\beta\|_{C^2(\partial \Omega)}, \|\gamma\|_{C^2(\partial \Omega)}, \textrm{diam}(\Omega), \theta_0)\cdot\left( \|f\|_{L^p(\Omega)} + \|g\|_{C^{1,\alpha}(\partial \Omega)} + \| \phi \|_{W^{2,p}(\Omega)}\right)
\end{eqnarray*}}}
thereby finishing the proof of the Theorem.
\end{proof}

As a consequence, we obtain the uniqueness of existing solutions.

\begin{corollary}[\textbf{Uniqueness}] The viscosity solution found in the Theorem \ref{T3} is unique.
\end{corollary}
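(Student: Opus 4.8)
The plan is to deduce uniqueness from the Comparison Principle for the obstacle problem, exactly as in the penalization argument but now applied directly to two limiting solutions. Suppose $u_1$ and $u_2$ are both $L^p$-viscosity solutions of \eqref{obss1} with the same data $f$, $\phi$, $g$, $\beta$, $\gamma$. The goal is to show $u_1 \le u_2$ in $\overline{\Omega}$; by symmetry this yields $u_1 = u_2$.

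First I would set up the key comparison set. Consider $\mathcal{D} \defeq \{x \in \overline{\Omega} : u_1(x) > u_2(x)\}$ and assume for contradiction that $\mathcal{D} \neq \emptyset$. On $\partial\Omega$ both solutions satisfy the same oblique condition $\mathcal{B}(x, u_i, Du_i) = g(x)$, and on $\partial\mathcal{D} \setminus \partial\Omega$ we have $u_1 = u_2$ by continuity (here I use the $C^{1,1-n/p}(\overline{\Omega})$ regularity from Theorem \ref{T3}, so pointwise values make sense). On $\mathcal{D}$, since $u_1 > u_2 \ge \phi$, the point $x$ lies in the non-coincidence set of $u_1$, hence $F(D^2 u_1, Du_1, u_1, x) = f(x)$ there in the viscosity sense; meanwhile $u_2$ always satisfies $F(D^2 u_2, Du_2, u_2, x) \le f(x)$ in $\Omega$. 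Therefore on $\mathcal{D}$ we have $F(D^2 u_1, Du_1, u_1, x) \ge F(D^2 u_2, Du_2, u_2, x)$ in the viscosity sense, with $u_1 = u_2$ on the free part of the boundary and the matching oblique condition on $\partial\mathcal{D} \cap \partial\Omega$. The Comparison Principle for oblique derivative problems --- valid here because assumptions (A5)-(A6) are in force (see \cite[Theorem 2.10]{CCKS} and \cite[Theorem 7.17]{Leiberman}, and Theorem \ref{comparation} together with the A.B.P. estimate, Lemma \ref{ABP-fullversion}) --- then forces $u_1 \le u_2$ on $\mathcal{D}$, contradicting the definition of $\mathcal{D}$. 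Hence $\mathcal{D} = \emptyset$, i.e. $u_1 \le u_2$ throughout $\overline{\Omega}$, and swapping roles gives $u_1 = u_2$.

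The main obstacle I anticipate is justifying that the Comparison Principle applies on the \emph{open subset} $\mathcal{D}$ rather than on all of $\Omega$: one must check that $\mathcal{D}$ is relatively open in $\overline{\Omega}$ (which follows from continuity of $u_1 - u_2$), that the boundary of $\mathcal{D}$ decomposes cleanly into a portion where $u_1 - u_2 = 0$ and a portion on $\partial\Omega$ where the oblique condition holds, and that the strict-supersolution/subsolution inequalities survive on $\mathcal{D}$ in the $L^p$-viscosity sense when restricted to this subdomain. The properness assumption (A6) (with constant $d > 0$) is what makes the comparison strict enough to conclude $u_1 \le u_2$ without a vanishing-viscosity argument; using the quantitative A.B.P. bound of Lemma \ref{ABP-fullversion} applied to $u_1 - u_2 \in \underline{\mathcal{S}}(\lambda/n, \Lambda, 0)$ on $\mathcal{D}$ with $\mathcal{B}(x, u_1 - u_2, D(u_1 - u_2)) \ge 0$ on $\partial\mathcal{D} \cap \partial\Omega$ and $u_1 - u_2 \le 0$ on the rest of $\partial\mathcal{D}$ delivers $\sup_{\mathcal{D}}(u_1 - u_2) \le 0$ directly.
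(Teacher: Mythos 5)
Your argument is correct and coincides with the paper's own proof: both define the set where one solution strictly exceeds the other, note that the larger solution is detached from the obstacle there and hence solves the equation exactly while the smaller one is a supersolution, and then invoke the Comparison Principle for oblique problems (with matching data on $\partial\mathcal{D}\cap\partial\Omega$ and equality on $\partial\mathcal{D}\setminus\partial\Omega$) to reach a contradiction. The only cosmetic difference is that the paper handles the symmetry via a ``without loss of generality'' choice of which solution is larger, whereas you run the argument once and then swap roles.
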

\begin{proof}
Indeed, let $u$ and $v$ be two viscosity solutions of \eqref{obss1}. Assume that $u \not= v$. Then, we may suppose without loss of generality that
$$
	\mathcal{O}_{\sharp} = \{v > u\} \not= \emptyset.
$$
Since $v > u \ge \phi$ in $\mathcal{O}_{\sharp}$, we obtain in the viscosity sense
$$
	F(D^2 v, Dv, v,x) = f(x) \quad \textrm{in} \quad \mathcal{O}_{\sharp}
$$
Hence, we conclude that
$$
		\left\{
		\begin{array}{rclclcc}
			F(D^2u,Du,u,x) &\le& f(x) & \le & F(D^2 v, Dv,v,x)& \mbox{in} & \mathcal{O}_{\sharp} \\
			& & u(x)&=& v(x)  & \mbox{on} &\partial \mathcal{O}_{\sharp} \setminus \partial \Omega,\\
			\mathcal{B}(x,u,Du) &=& g(x) & =& \mathcal{B}(x,v,Dv) & \mbox{on} & \partial \mathcal{O}_{\sharp} \cap \partial \Omega
		\end{array}
		\right.
$$

Therefore, according to Comparison Principle for problems with oblique boundary conditions \cite[Theorem 2.10]{CCKS} and \cite[Theorem 7.17]{Leiberman}, we conclude that $u \ge v$ in $\mathcal{O}_{\sharp}$ whether $\partial \mathcal{O}_{\sharp} \cap \partial \Omega = \emptyset$ or not. However, this contradicts the definition of the set $\mathcal{O}_{\sharp}$, thereby proving that $u =v$.
\end{proof}

\section{Final conclusions: Density of viscosity solutions}\label{Sec_Density}

\hspace{0.4cm}In this final part, we will deliver another application to $W^{2,p}$ regularity estimates.

\begin{theorem}[{\bf $W^{2,p}$ density on the class $C^{0}-$viscosity solutions}]
Let $u$ be a $C^{0}-$viscosity solution of
$$
\left\{
\begin{array}{rclcl}
  F(D^{2}u,x) & = & f(x) & \text{in} & \mathrm{B}^{+}_{1} \\
  \mathcal{B}(x, u, Du) & = & g(x) & \text{on} & \mathrm{T}_{1},
\end{array}
\right.
$$
where $f\in L^{p}(\mathrm{B}^{+}_{1})\cap C^{0}(\mathrm{B}^{+}_{1})$ (for $n \le p<\infty$),  $\beta,\gamma, g\in C^{1,\alpha}(\mathrm{T}_{1})$ with $\gamma\leq 0$ and $\beta \cdot \overrightarrow{\textbf{n}} \geq \mu_{0}$ on $\mathrm{T}_{1}$, for some $\mu_{0}>0$. Then, for any $\delta>0$, there exists a sequence $(u_{j})_{j\in\mathbb{N}}\subset W^{2,p}_{loc}(\mathrm{B}^{+}_{1})\cap \mathcal{S}(\lambda-\delta,\Lambda+\delta,f)$ converging local uniformly to $u$.
\end{theorem}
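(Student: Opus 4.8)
The plan is to realize each $u_j$ as the solution of an oblique problem on $\mathrm{B}^{+}_1$ in which $F$ is replaced by an operator $F_j$ that converges locally uniformly to $F$, stays $(\lambda,\Lambda)$-elliptic with $F_j(\mathcal{O}_{n\times n},\cdot)=0$, and whose recession profile is a Pucci extremal operator — so that the $W^{2,p}$ machinery behind Theorem~\ref{T1}/Corollary~\ref{Cor} applies to $u_j$ — and then to recover $u$ in the limit by compactness, stability and uniqueness. For $R>0$ I would set
$$
F_R(\mathrm{X},x)\defeq\inf_{\|\mathrm{Z}\|\le R}\Big[\,F(\mathrm{Z},x)+\mathscr{P}^{+}_{\lambda,\Lambda}(\mathrm{X}-\mathrm{Z})\,\Big].
$$
An infimum of $(\lambda,\Lambda)$-elliptic operators being $(\lambda,\Lambda)$-elliptic, $F_R$ is $(\lambda,\Lambda)$-elliptic; using the Pucci bound $F(\mathrm{X},x)-F(\mathrm{Z},x)\le\mathscr{P}^{+}_{\lambda,\Lambda}(\mathrm{X}-\mathrm{Z})$ and the normalization one checks that $F_R\equiv F$ on $\{\|\mathrm{X}\|\le R\}$ and $F_R(\mathcal{O}_{n\times n},x)=0$, so $F_R\to F$ locally uniformly as $R\to\infty$; and for $\|\mathrm{X}\|$ large, $F_R(\mathrm{X},x)=\mathscr{P}^{+}_{\lambda,\Lambda}(\mathrm{X})+O_R(1)$, whence the recession exists with $F_R^{\sharp}\equiv\mathscr{P}^{+}_{\lambda,\Lambda}$. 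Since $\mathscr{P}^{+}_{\lambda,\Lambda}$ has no $x$-dependence, $\psi_{F_R^{\sharp}}\equiv 0$, i.e. (A3) holds, and the homogeneous oblique problem for the convex operator $\mathscr{P}^{+}_{\lambda,\Lambda}$ enjoys up-to-the-boundary $C^{1,1}$ estimates by the Evans--Krylov--type theory underlying (A4). Finally, a mollification of $F_R$ in the $x$ variable (one-sided near $\mathrm{T}_1$) makes it continuous in $x$ with a modulus — so $(SC)$ holds — without affecting the ellipticity constants nor the recession profile.

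Fix $R_j\uparrow\infty$ and let $F_j$ be the corresponding regularized operators. Since $u\in C^0(\overline{\mathrm{B}^{+}_1})$, Theorems~\ref{Existencia} and~\ref{Unicidade} provide a unique $u_j\in C^0(\overline{\mathrm{B}^{+}_1})$ with
$$
\left\{
\begin{array}{rclcl}
F_j(D^2u_j,x) &=& f(x) & \mbox{in} & \mathrm{B}^{+}_1,\\
\mathcal{B}(x,u_j,Du_j) &=& g(x) & \mbox{on} & \mathrm{T}_1,\\
u_j &=& u & \mbox{on} & \partial\mathrm{B}^{+}_1\setminus\mathrm{T}_1.
\end{array}
\right.
$$
As $F_j$ satisfies (A1)--(A4) and $(SC)$ — with $\psi_{F_j^{\sharp}}\equiv 0$, so the smallness hypothesis of Corollary~\ref{Cor} is automatic — Corollary~\ref{Cor}, applied after rescaling around interior and flat-boundary points and combined with a covering argument, gives $u_j\in W^{2,p}_{\mathrm{loc}}(\mathrm{B}^{+}_1\cup\mathrm{T}_1)$. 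Moreover, since $F_j$ is $(\lambda,\Lambda)$-elliptic with $F_j(\mathcal{O}_{n\times n},x)=0$, at a.e. point $\mathscr{P}^{-}_{\lambda,\Lambda}(D^2u_j)\le f\le\mathscr{P}^{+}_{\lambda,\Lambda}(D^2u_j)$, and because $u_j\in W^{2,p}_{\mathrm{loc}}$ these inequalities hold in the viscosity sense; hence $u_j\in\mathcal{S}(\lambda,\Lambda,f)\subseteq\mathcal{S}(\lambda-\delta,\Lambda+\delta,f)$, the parameter $\delta$ merely providing harmless slack.

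It remains to prove $u_j\to u$ locally uniformly. A.B.P.\ (Lemma~\ref{ABP-fullversion}) bounds $\|u_j\|_{L^\infty(\mathrm{B}^{+}_1)}$ uniformly by a universal multiple of $\|u\|_{L^\infty(\mathrm{B}^{+}_1)}+\|f\|_{L^n(\mathrm{B}^{+}_1)}+\|g\|_{L^\infty(\mathrm{T}_1)}$; Theorem~\ref{Holder_Est}, complemented with barriers for $\mathscr{P}^{\pm}_{\lambda,\Lambda}$ near $\partial\mathrm{B}^{+}_1\setminus\mathrm{T}_1$ and near the edge $\partial\mathrm{T}_1$ (barriers that serve all $F_j$ at once, as the latter share the ellipticity constants and are squeezed between $\mathscr{P}^{-}_{\lambda,\Lambda}$ and $\mathscr{P}^{+}_{\lambda,\Lambda}$), makes $\{u_j\}$ equicontinuous on $\overline{\mathrm{B}^{+}_1}$. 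By Arzel\`{a}--Ascoli a subsequence converges uniformly to some $u_\infty\in C^0(\overline{\mathrm{B}^{+}_1})$ with $u_\infty=u$ on $\partial\mathrm{B}^{+}_1\setminus\mathrm{T}_1$. Since $F_j\to F$ locally uniformly, for each fixed $\varphi\in C^2$ one has $\|F_j(D^2\varphi,\cdot)-F(D^2\varphi,\cdot)\|_{L^p}\to 0$, so hypothesis \eqref{Est1} of the Stability Lemma~\ref{Est} is met and $u_\infty$ is a $C^0$-viscosity solution of $F(D^2u_\infty,x)=f$ in $\mathrm{B}^{+}_1$; the oblique condition passes to the limit by the standard viscosity stability argument at the boundary. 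Thus $u_\infty$ solves the same oblique/Dirichlet problem as $u$, so $u_\infty=u$ by Theorem~\ref{Unicidade}; as the limit is independent of the subsequence, the whole sequence $u_j$ converges to $u$ locally uniformly.

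The main obstacle is twofold: verifying that the regularized operators $F_j$ genuinely meet all of (A1)--(A4) and $(SC)$ — in particular that the recession profile $\mathscr{P}^{+}_{\lambda,\Lambda}$ truly supplies the up-to-the-flat-boundary $C^{1,1}$ estimate needed to invoke Corollary~\ref{Cor} (which rests on the oblique-boundary Evans--Krylov theory for convex operators) — and executing the compactness/stability scheme up to the non-smooth edge $\partial\mathrm{T}_1$ of the half-ball while passing the \emph{oblique} boundary condition to the limit in the viscosity sense.
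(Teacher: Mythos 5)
Your proposal is correct and follows the same architecture as the paper's proof: replace $F$ by operators $F_{j}$ that coincide with $F$ on a large ball of $\text{Sym}(n)$ but whose recession profile is a Pucci extremal operator (convex, hence covered by the oblique Evans--Krylov theory behind (A4), with $\psi_{F_j^{\sharp}}\equiv 0$), solve the corresponding oblique problems with Dirichlet data $u$ on $\partial\mathrm{B}^{+}_{1}\setminus\mathrm{T}_{1}$ via Theorems \ref{Existencia}--\ref{Unicidade}, invoke the $W^{2,p}$ machinery of Proposition \ref{T-flat}/Corollary \ref{Cor}, and recover $u$ by A.B.P. plus H\"older compactness, the Stability Lemma \ref{Est} and uniqueness. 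The one genuine difference is the truncation device. The paper takes $F_{j}(\mathrm{X},x)\defeq\max\{F(\mathrm{X},x),\mathscr{P}^{+}_{\lambda-\delta,\Lambda+\delta}(\mathrm{X})-\mathrm{C}_{j}\}$, and the $\delta$-relaxation of the ellipticity constants is precisely what forces the maximum to select the Pucci branch for $\|\mathrm{X}\|\gtrsim\mu\mathrm{C}_{j}/\delta$, so that $F_{j}^{\sharp}=\mathscr{P}^{+}_{\lambda-\delta,\Lambda+\delta}$; this is also why the statement only asserts $u_{j}\in\mathcal{S}(\lambda-\delta,\Lambda+\delta,f)$. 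Your inf-convolution $F_R(\mathrm{X},x)=\inf_{\|\mathrm{Z}\|\le R}\left[F(\mathrm{Z},x)+\mathscr{P}^{+}_{\lambda,\Lambda}(\mathrm{X}-\mathrm{Z})\right]$ reaches the same recession profile without relaxing the constants, since the discrepancy $|F_R(\mathrm{X},x)-\mathscr{P}^{+}_{\lambda,\Lambda}(\mathrm{X})|=O_R(1)$ is annihilated by the factor $\tau$ in \eqref{Reces}; it keeps (A1) with the original $(\lambda,\Lambda)$ and in fact delivers the stronger containment $u_{j}\in\mathcal{S}(\lambda,\Lambda,f)$, the $\delta$ being genuinely superfluous in your version. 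Two small points to watch: (i) the extra mollification in $x$ you introduce to secure $(SC)$ is harmless for ellipticity, the normalization and the recession, but it destroys the exact identity $F_{j}\equiv F$ on $\{\|\mathrm{X}\|\le R_{j}\}$, so the hypothesis \eqref{Est1} of Lemma \ref{Est} must then be derived from the uniform continuity of $F(\mathrm{X},\cdot)$ on compacts as the mollification parameter tends to $0$ with $j$ (the paper sidesteps this because $\psi_{F_{j}}\le\psi_{F}$ for its max-construction, tacitly assuming $(SC)$ for $F$ itself when invoking Theorem \ref{Existencia}); (ii) the equicontinuity of $(u_{j})$ up to $\partial\mathrm{B}^{+}_{1}\setminus\mathrm{T}_{1}$ and the edge $\partial\mathrm{T}_{1}$, which you rightly flag as requiring uniform barriers, is treated only implicitly in the paper as well.
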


\begin{proof}
The proof follows some ideas from \cite[Theorem 8.1]{PT} adapted to oblique boundary scenery. We will present the details to the reader for completeness. Firstly, we are going to build up such a desired sequence of operators $F_{j}:Sym(n)\times \mathrm{B}^{+}_{1}\longrightarrow\mathbb{R}$ as follow: Given $\delta>0$ we consider the Pucci Maximal operator \begin{eqnarray*}
\mathscr{L}_{\delta}(\mathrm{X})\defeq \mathscr{P}^{+}_{(\lambda-\delta),(\Lambda+\delta)}(\mathrm{X})=(\Lambda +\delta)\sum_{e_i >0} e_i(\mathrm{X}) +(\lambda-\delta) \sum_{e_i <0} e_i(\mathrm{X}),
\end{eqnarray*}
where $e_{i}(\mathrm{X})$ are the eigenvalues of matrix $\mathrm{X}\in \text{Sym}(n)$. Now, we define
\begin{eqnarray*}
F_{j}:&\text{Sym}(n)\times \mathrm{B}^{+}_{1}&\longrightarrow \mathbb{R}\\
&(\mathrm{X},x)&\longmapsto \max\{F(\mathrm{X},x),\mathscr{L}_{\delta}(\mathrm{X})-\mathrm{C}_{j}\},
\end{eqnarray*}
for $(\mathrm{C}_{j})_{j\in\mathbb{N}}$ a sequence of divergence positive number (chosen precisely \textit{a posteriori}). Thus, notice that $F_{j}$ is continuous (because it is the maximum between two continuous functions) and uniformly elliptic with ellipticity constants $\lambda-\delta\leq \Lambda+\delta$, as $F$ and $\mathscr{L}_{\delta}$ are. Now, taking into account  the $(\lambda,\Lambda)$-ellipticity of $F$ there holds
\begin{eqnarray*}
F(\mathrm{X},x)&\geq& \lambda\sum_{e_i >0} e_i(\mathrm{X}) +\Lambda\sum_{e_i <0} e_i\\
&\geq& \lambda\displaystyle\sum_{e_i >0}e_{i}(\mathrm{X})-\Lambda\| \mathrm{X}\|\\
&=& \mathscr{L}_{\delta}(\mathrm{X})-(\Lambda+\delta-\lambda)\sum_{e_i >0}e_{i}-(\lambda-\delta)\sum_{e_i >0}e_{i}-\Lambda\| \mathrm{X}\|\\
&\geq& \mathscr{L}_{\delta}(\mathrm{X})-(2\Lambda -\lambda+\delta)\| \mathrm{X}\|\\
&\geq& \mathscr{L}_{\delta}(\mathrm{X})-\mathrm{C}_{j}, \ \forall \mathrm{B}_{j}\subset \text{Sym}(n), \ \forall x\in \mathrm{B}^{+}_{1},
\end{eqnarray*}
where we select $\mathrm{C}_{j}\defeq j(2\Lambda-\lambda+\delta)$. Hence, $F\equiv F_{j}$ in $\mathrm{B}_{j}\times \mathrm{B}^{+}_{1}\subset Sym(n)\times \mathrm{B}^{+}_{1}$. On the other hand, we investigate the recession operator associated to $F_{j}$. For that purpose, for each $\mu>0$ we have
\begin{eqnarray*}
(F_{j})_{\mu}(\mathrm{X},x)=\mu F_{j}(\mu^{-1}\mathrm{X},x)=\max\{F_{\mu}(\mathrm{X},x),L_{\delta}(\mathrm{X})-\mu C_{j}\}.
\end{eqnarray*}

Since $F_{\mu}$ is $(\lambda,\Lambda)$-elliptic for any $\mu>0$ we get
\begin{eqnarray*}
F_{\mu}(\mathrm{X},x)&\leq& \Lambda\sum_{e_i >0} e_i(\mathrm{X}) +\lambda\sum_{e_i <0} e_i(\mathrm{X})\\
& = & \mathscr{L}_{\delta}(\mathrm{X})-\delta \sum_{e_i >0} e_{i}(\mathrm{X})+\delta\sum_{e_i <0} e_{i}(\mathrm{X})\\
&\leq& \mathscr{L}_{\delta}(\mathrm{X})-\delta\|\mathrm{X}\|\\
&\leq & \mathscr{L}_{\delta}(\mathrm{X})-\mu \mathrm{C}_{j},
\end{eqnarray*}
for all $ \mathrm{X}\in \mathrm{B}_{\frac{\mu \mathrm{C}_{j}}{\delta}}\subset \text{Sym}(n)$ and $x\in \mathrm{B}^{+}_{1}$. Therefore, we conclude that $F_{j}(\mathrm{X},x)= \mathscr{L}_{\delta}(\mathrm{X})-\mathrm{C}_{j}$ outside a ball of radius $\sim \mathrm{C}_{j}$, then $F_{j}^{\sharp}=\mathscr{L}_{\delta}$. Now, notice that $F_{j}^{\sharp}$ fulfills $C^{1,1}$ \textit{a priori} estimates, i.e., given $g_{0}\in C^{1,\alpha}(\mathrm{T}_{1})$ any viscosity solution to
$$
\left\{
\begin{array}{rclcl}
  F_{j}^{\sharp}(D^{2}\mathfrak{h}, x_{0}) & = & 0 & \mbox{in} & \mathrm{B}^{+}_{1}\\
  \mathcal{B}(x, \mathfrak{h}, D\mathfrak{h}) & = & g_{0}(x) & \mbox{on}  & \mathrm{T}_{1}
\end{array}
\right.
$$
satisfies $\mathfrak{h}\in C^{1,1}(\overline{\mathrm{B}^{+}_{1/2}})$ (see e.g., \cite[Teorema 1.3]{LiZhang}) with the following estimate:
\begin{eqnarray*}
\|\mathfrak{h}\|_{C^{1,1}(\overline{\mathrm{B}^{+}_{1/2}})}\leq \mathrm{C}\cdot \left(\|\mathfrak{h}\|_{L^{\infty}(\mathrm{B}^{+}_{1})}+\|g\|_{C^{1,\alpha}(\overline{\mathrm{T}_{1}})}\right).
\end{eqnarray*}

Therefore, statements of Proposition \ref{T-flat} are in force.
Then, for each fixed $j\in\mathbb{N}$, any viscosity solution of
$$
\left\{
\begin{array}{rcrcl}
  F_{j}(D^{2}v, x) & = & f(x) & \text{in} & \mathrm{B}^{+}_{1} \\
  \mathcal{B}(x, v, Dv) & = & g(x) & \text{on} & \mathrm{T}_{1}
\end{array}
\right.
$$
enjoys $W^{2,p}$ regularity estimates. To be more specific, for each $j\in\mathbb{N}$ there exists a constant $\kappa_{j}>0$ such that
\begin{eqnarray*}
\|v\|_{W^{2,p}(\mathrm{B}_{1/2}^{+})}\leq \kappa_{j}\cdot (\|v\|_{L^{\infty}(\mathrm{B}_{1}^{+})}+\|f\|_{L^{p}(\mathrm{B}^{+}_{1})}+\|g\|_{C^{1,\alpha}(\overline{\mathrm{T}_{1}})}).
\end{eqnarray*}

Finally, we build up the desired sequence $(u_{j})_{j \in \mathbb{N}}$ to be a viscosity solution to
$$\left\{
\begin{array}{rcrcl}
  F_{j}(D^{2}u_{j},x) & = & f(x) & \mbox{in} & \mathrm{B}^{+}_{1} \\
\mathcal{B}(x, u_{j}, Du_{j}) & = & g(x) & \mbox{on} & \mathrm{T}_{1}\\
u(x) & = & u_{j}(x) & \mbox{on} & \partial \mathrm{B}^{+}_{1}\setminus \mathrm{T}_{1},
\end{array}
\right.
$$
whose the existence does hold from Theorem \ref{Existencia}. Furthermore, each $u_{j}\in W^{2,p}(\mathrm{B}^{+}_{1})$ and since operators $F_{j}$ are uniformly elliptic, it follow that $F_{j}(\cdot,x)\to F_{0}(\cdot,x)$ local uniformly in $\text{Sym}(n)$ for each $x\in \mathrm{B}^{+}_{1}$. Furthermore, since $F_{j}\equiv F$ in $\mathrm{B}_{j}\times \mathrm{B}^{+}_{1}$ then $F_{0}\equiv F$. In conclusion, using local $C^{0, \alpha}$ regularity and A.B.P. estimates (see Theorem \ref{Holder_Est} and Lemma \ref{ABP-fullversion} respectively) the sequence $(u_{j})_{j \in \mathbb{N}}$, converge, up to a subsequence, local uniformly to $u_{0}$ in the $C^{0, \alpha}$-topology. In addition, according to Stability results (Lemma \ref{Est}) $u_{0}$ is a viscosity solution of
$$
\left\{
\begin{array}{rclcl}
  F(D^{2}u_{0},x) & = & f(x) & \mbox{in} & \mathrm{B}^{+}_{1}\\
  \mathcal{B}(x, u_{0}, Du_{0}) & = & g(x) & \mbox{on} & \mathrm{T}_{1}\\
  u(x) & = & u_0(x) & \mbox{on} & \partial \mathrm{B}^{+}_{1}\setminus \mathrm{T}_{1}.
\end{array}
\right.
$$

Finally, by taking $w=u_{0}-u$, we can see that $w$ satisfies in the viscosity sense
$$
\left\{
\begin{array}{rcl}
  w\in \mathcal{S}(\lambda/n,\Lambda,0)  &  \mbox{in} & \mathrm{B}^{+}_{1}\\
  \mathcal{B}(x, w, Dw)=0  & \mbox{on} & \mathrm{T}_{1}\\
  w=0 & \mbox{on} & \partial \mathrm{B}^{+}_{1}\setminus \mathrm{T}_{1},
\end{array}
\right.
$$
and, once again, by A.B.P. estimates (Lemma \ref{ABP-fullversion}) we conclude that $w=0$ in $\overline{\mathrm{B}^{+}_{1}}\setminus \mathrm{T}_{1}$. Therefore, $w\equiv 0$, i.e., $u=u_{0}$, thereby finishing the proof.

\end{proof}

\subsection*{Acknowledgments}

\hspace{0.4cm} J.S. Bessa was partially supported by CAPES-Brazil under Grant No. 88887.482068/2020-00. J.V. da Silva, M. N. Barreto Frederico and G.C. Ricarte have been partially supported by CNPq-Brazil under Grant No. 307131/2022-0, No 165746/2020-3, and No. 304239/2021-6.  J.V. da Silva has been partially supported by   FAPDF Demanda Espont\^{a}nea 2021 and   FAPDF - Edital 09/2022 - DEMANDA ESPONTÂNEA. Part of this work was developed during the \textit{Fortaleza Conference on Analysis and PDEs} (2022) at the Universidade Federal do Cear\'{a} (UFC-Brazil). J.V. da Silva would like to thank to UFC's Department of Mathematics for fostering a pleasant scientific and research atmosphere during his visit in the Summer of 2022.

\end{document}